\newtheorem {theorem}{Theorem}[section]
\newtheorem {lemma}[theorem]{Lemma}
\newtheorem {proposition}[theorem]{Proposition}
\newtheorem {conjecture}[theorem]{Conjecture}
\newtheorem {definition}[theorem]{Definition}
\theoremstyle{remark}
\newtheorem {remark}[theorem]{Remark}
\newtheorem {example}[theorem]{Example}
\newcommand\eps{\varepsilon}
\renewcommand\th{^{\text{th}}}
\newcommand\Z{\mathbb{Z}}
\newcommand\R{\mathbb{R}}
\newcommand\Q{\mathbb{Q}}
\def\H{\mathbb{H}}
\def\dd {{\mathbf {d}}}
\newcommand \bH {\overline{\H}}
\newcommand\T{\mathbb{T}}
\newcommand\Ta{\mathbb{T}_\alpha}
\newcommand\Tb{\mathbb{T}_\beta}
\newcommand\Xs{\mathbb{X}}
\newcommand\Os{\mathbb{O}}
\newcommand\Rect{\mathrm{Rect}}
\def\Sym{\mathrm{Sym}}
\def\CP{\mathit{CP}}
\def\HP{\mathit{HP}}
\def\Os{\mathbb O}
\def\zz {{\mathbb{Z}}}
\def\rr {{\mathbb{R}}}
\def\TT {{\mathcal{T}}}
\def\del {{\partial}}
\def\tt {{\mathfrak{t}}}
\def\ss {{\mathfrak{s}}}
\def\C {{\mathcal{C}}}
\def\zed {{\mathcal{Z}}}
\def\lk {{\operatorname{lk}}}
\def\spc {{\operatorname{Spin^c}}}
\def\fin\qedhere
\def\pr {{\text{pr}}}
\DeclareMathOperator{\id}{id}
\DeclareMathOperator{\Hom}{Hom}
\def\precdot {\prec}
\def\from {{\leftarrow}}
\def\hey {\! \sslash \!}
\def\CFK{\textit{CFK}}
\def\CFKi{\CFK^{\infty}}
\def\M {\mathcal {M}}
\def\s{\mathbf s}
\def\t{\mathbf t}
\def\x{\mathbf x}
\def\y{\mathbf y}
\def\Chain {\mathfrak A}
\def\Ring {\mathcal R}
\def\K {\mathcal K}
\newcommand\orL{\vec{L}}
\newcommand\orN{\vec{N}}
\newcommand\orM{\vec{M}}
\newcommand\Mas{\mu}
\def\S{\mathbf S}
\newcommand\alphas{\mbox{\boldmath$\alpha$}}
\newcommand\betas{\mbox{\boldmath$\beta$}}
\def\Hyper{\mathcal{H}}
\def\E{\mathbb{E}}
\def\Field{\mathbb F}
\newcommand\EmptyRect{\Rect^\circ}
\def\cx {\mathfrak{c}}
\def\ff {{\mathbb{F}}}
\def\F {{\mathcal{F}}}
\def\G {{\mathcal{G}}}
\def\S {{\mathbf{S}}}
\def\ES {{\mathbf{ES}}}
\def\p {{\mathbf{p}}}
\def\grcp {{\operatorname{gr}_{\F}\CP^*(G, \zed) }}
\def\I {{\mathcal{I}}}
\def\ux {{\mathfrak{u}}}
\def\can {{\operatorname{can}}}
\def\delt{{\mathfrak{d}}}
\def\D {{\mathcal{D}}}
\def\De {{D}}
\def\sh {{\mathit{Sh}}}
\def\snail {{\operatorname{sn}}}
\def\mix {{\operatorname{mix}}}
\def\ker {{\operatorname{Ker}}}
\def\im {{\operatorname{Im}}}
\def\coker{{\operatorname{Coker}}}
\def\Tors{{\operatorname{Tors}}}
\def\fin\qedhere
\def\from {{\leftarrow}}
\newcommand\HFm{\mathbf{HF}^-}
\newcommand\CFm{\mathbf{CF}^-}
\newcommand\HFinf{\mathbf{HF}^\infty}
\newcommand\CFinf{\mathbf{CF}^\infty}
\newcommand{\HE}{\mathit{HE}}
\newcommand{\CE}{\mathit{CE}}
\newcommand{\EC}{\mathit{EC}}
\newcommand{\iHF}{\mathit{HF}}
\newcommand{\iCF}{\mathit{CF}}
\def\Am {\mathbf A^{\! \!-}}
\def\Cc {\mathfrak{C}}
\def\Ccint {\Cc_{\operatorname{int}}}
\def\Ringbig{\mathfrak{R}}
\def\tU{\widetilde{U}}
\def\zero{\mathbf{0}}
\def\FF{\mathfrak{F}}
\def\U {{\mathbf{U}}}
\newcommand\Tbp{\mathbb{T}_{\beta'}}
\def\CFLc{\mathcal{CFL}}
\def\ECFLc{\mathcal{ECFL}}
\def\prelim{\operatorname{prelim}}
\def\tR{\widetilde{\Ring}}
\def\tN{\widetilde{N}}
\def\ms{\mathbf{m}}
\newcommand{\leftexp}[2]{{\vphantom{#2}}^{#1}{#2}}
\begin{document}

\title{Grid diagrams and Heegaard Floer invariants}

\author[Ciprian Manolescu]{Ciprian Manolescu}
\thanks {CM was supported by NSF grants DMS-0852439, DMS-1104406, DMS-1402914, and a Royal Society University Research Fellowship.}
\address {Department of Mathematics, Stanford University\\ 
Stanford, CA 94305}
\email {cm5@stanford.edu}

\author [Peter S. Ozsv\'ath]{Peter S. Ozsv\'ath}
\thanks{PSO was supported by NSF grant numbers DMS-0804121, DMS-1258274 and a Guggenheim Fellowship.}
\address {Department of Mathematics, Princeton University\\ 
Princeton, NJ 08544}
\email {petero@math.princeton.edu}

\author [Dylan P. Thurston]{Dylan P. Thurston}
\thanks{DPT was supported by a Sloan Research Fellowship.}
\address {Department of Mathematics, Indiana University\\ 
Bloomington, IN 47405}
\email {dpthurst@indiana.edu}

\begin {abstract}
  We give combinatorial descriptions of the Heegaard Floer homology
  groups for arbitrary three-manifolds (with coefficients in
  $\Z/2\Z$).  The descriptions are based on presenting the
  three-manifold as an integer surgery on a link in the three-sphere,
  and then using a grid diagram for the link. We also give
  combinatorial descriptions of the mod 2 Ozsv\'ath-Szab\'o mixed invariants
  of closed four-manifolds, also in terms of grid diagrams.
\end {abstract}

\maketitle

\section {Introduction}

Starting with the seminal work of Donaldson \cite{Donaldson}, gauge
theory has found numerous applications in low-dimensional
topology. Its role is most important in dimension four, where the
Donaldson invariants \cite{DonaldsonPolynomials}, and later the
Seiberg-Witten invariants~\cite{SW1,SW2,Witten}, were used to
distinguish between homeomorphic four-manifolds that are not
diffeomorphic. More recently, Ozsv\'ath and Szab\'o introduced
Heegaard Floer theory~\cite{HolDisk,HolDiskTwo}, an invariant for
low-dimensional manifolds inspired by gauge theory, but defined using
methods from symplectic geometry. Heegaard Floer invariants in
dimension three are known to detect the Thurston norm \cite{GenusBounds}
and fiberedness \cite{Ni3}.  Heegaard Floer homology can be used to
construct various four-dimensional invariants as
well~\cite{HolDiskFour, Zemke}. Notable are the so-called mixed invariants,
which are conjecturally the
same as (the mod two reduction of)
the Seiberg-Witten invariants and, further, are known to
share many of their properties: in particular, they are able to
distinguish homeomorphic four-manifolds with different smooth
structures. In a different direction, there also exist Heegaard Floer
invariants for null-homologous knots and links in three-manifolds
(see~\cite{Knots,RasmussenThesis,Links}); these have applications to
knot theory.

One feature shared by the Donaldson, Seiberg-Witten, and Heegaard
Floer invariants is that their original definitions are based on
counting solutions to some nonlinear partial differential
equations. This makes it difficult to exhibit algorithms which, given
a combinatorial presentation of the topological object (for example, a
triangulation of the manifold, or a diagram of the knot), calculate
the respective invariant. The first such general algorithms appeared
in 2006, in the setting of Heegaard Floer theory.  Sarkar and
Wang~\cite{SarkarWang} gave an algorithm for calculating the hat
version of Heegaard Floer homology of three-manifolds.  The
corresponding maps induced by simply connected cobordisms were
calculated in~\cite{LMW}.  In a different direction, all versions of
the Heegaard Floer homology for links in $S^3$ were found to be
algorithmically computable using {\em grid diagrams}, see
\cite{MOS}. See also~\cite{CubeResolutions,OSS, LOThf} for other
developments. This progress notwithstanding, a combinatorial
description of the smooth, closed four-manifold invariants remained
elusive.

Our aim in this paper is to present combinatorial descriptions of all
(completed) versions of Heegaard Floer homology for three-manifolds,
as well as of the mixed invariants of closed
four-manifolds. It should be noted that we only
work with invariants defined over the field $\Field = \zz/2\zz$.
However, we expect a sign refinement of our descriptions to be
possible.

Our strategy is to use (toroidal) grid diagrams to represent links,
and to represent
three- and four-manifolds in terms of surgeries on those links. Grid
diagrams were previously used in \cite{MOS} to give a combinatorial
description of link Floer homology. From here one automatically
obtains a combinatorial description of the Heegaard Floer homology of
Dehn surgeries on knots in $S^3$, since these are known to be
determined by the knot Floer complex,
cf.~\cite{IntSurg,RatSurg}. Since every three-manifold can be
expressed as surgery on a link in $S^3$, it is natural to pursue a
similar approach for links.

In ~\cite{LinkSurg}, the Heegaard Floer homology of
an integral surgery on a link is described in terms of some data
associated to Heegaard diagrams for the link and its sublinks. In particular, we can
consider this description in the case where the Heegaard diagrams come
from a grid diagram for the link. For technical reasons, we need to consider slightly different grid diagrams than the ones used in \cite{MOS} and \cite{MOST}. Whereas the grids in \cite{MOS} and \cite{MOST} had one $O$ marking and one $X$ marking in each row and column, here we will use grids with at least one extra free $O$ marking, that is, a marking such that there are no other markings in the same row or column. (An example of a grid diagram with four free markings is shown on the right hand side of Figure~\ref{fig:sparse}.)

In the setting of grids with at least one free marking, the
problem of computing the Heegaard Floer homology of surgeries boils
down to counting isolated pseudo-holomorphic polygons in the symmetric
product of the grid torus. Pseudo-holomorphic bigons of index one are
easy to count, as they correspond bijectively to empty rectangles on
the grid, cf.~\cite{MOS}. In general, one needs to count
$(k+2)$-gons of index $1-k$ that relate the Floer complex of the grid
to those of its destabilizations at $k$ points, where $k \geq 0$.
 
Just as in the case of bigons, one can associate to each polygon a
certain object on the grid, which we call an {\em enhanced
  domain}. Roughly, an enhanced domain consists of an ordinary domain
on the grid plus some numerical data at each destabilization
point. (See Definition~\ref{def:EnhancedDomain} below.) Further, when
the enhanced domain is associated to a pseudo-holomorphic polygon, it
satisfies certain positivity conditions. (See
Definition~\ref{def:PositiveEnhancedDomain} below.) Thus, the problem
of counting pseudo-holomorphic polygons reduces to finding the
positive enhanced domains of the appropriate index, and counting the
number of their pseudo-holomorphic representatives.

When $k=1$, this problem is almost as simple as in the case $k=0$.
Indeed, the only positive enhanced domains for $k=1$ are the
``snail-like'' ones used to construct the destabilization map in
\cite[Section 3.2]{MOST}, see Figure~\ref{fig:ULeft} below. It is not
hard to check that each such domain has exactly one pseudo-holomorphic
representative, modulo $2$.

The key fact that underlies the $k=1$ calculation is that in that case
there exist no positive enhanced domains of index $-k$ corresponding
to destabilization at $k$ points. Hence, the positive domains of index
$1-k$ are what is called \emph{indecomposable}: in particular, their
counts of pseudo-holomorphic representatives depend only on the
topology of the enhanced domain (i.e., they are independent of its
conformal structure). Unfortunately, this fails to be true for $k \geq
2$: there exist positive enhanced domains of index $-k$, so the counts
in index $1-k$ depend on the almost complex structure on the symmetric
product of the grid.

Nevertheless, we know that different almost complex structures give
rise to chain homotopy equivalent Floer complexes for the surgeries on
the link, though they may give different counts for 
particular domains. Let us think of an almost complex structure as a
way of assigning a count (zero or one, modulo $2$) to each positive
enhanced domain on the grid, in a compatible way. Of course, there may
exist other such assignments: we refer to an assignment that satisfies
certain compatibility conditions as a {\em formal complex
  structure}. (See Definition~\ref{def:FormalComplexStructure} below.)
Each formal complex structure produces a model for the Floer complex
of the link surgery. If two formal complex structures are homotopic
(in a suitable sense, see Definition~\ref{def:homot}), the resulting
Floer complexes are chain homotopy equivalent. Suppose now we could
show that all formal complex structures on a grid are homotopic. Then,
we would easily arrive at a combinatorial formulation for the Floer
homology of surgeries. Indeed, one could pick an arbitrary formal
complex structure on the grid, which is a combinatorial object; and
the homology of the resulting complex would be the right answer,
whether or not the formal structure came from an actual almost complex
structure on the symmetric product.

The question of whether or not any two formal complex structures on a
grid $G$ are homotopic basically reduces to whether or not a
certain cohomology group $\HE^d(G)$ vanishes in degrees $d < \min\{0, 2-k\}$.
Here, $\HE^*(G)$ is the cohomology of a complex generated by
positive enhanced domains, modulo periodic domains. (See  Definition~\ref{def:FormalComplexStructure}.) Computer
experimentation suggests the following:
\begin {conjecture}
\label {conj:extended}
Let $G$ be a toroidal grid diagram (with at least one free $O$ marking) representing a link $L \subset S^3$. Then $\HE^d(G) = 0$ for any $d < 0$. 
\end {conjecture}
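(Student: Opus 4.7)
The plan is to show that every positive enhanced cocycle of negative degree is a coboundary, by constructing an explicit contracting homotopy that exploits the distinguished free $O$ marking. A positive enhanced domain contributing to degree $d$ has index $1-k = d$, where $k \geq 0$ counts the destabilization points, so $d < 0$ forces $k \geq 2$. Cocycles in degree $d < 0$ are thus formal sums of positive enhanced domains with $k \geq 2$ destabilization points on which the coboundary vanishes modulo periodic domains.

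Fix a free $O$ marking $O_\star$, whose row and column contain no other $X$ or $O$ markings. The first step is to introduce a filtration on the cochain complex, for instance by the local multiplicity of the domain at the cell $O_\star$, or by some lexicographic ordering on the destabilization data. The hope is that the associated graded decouples into simpler pieces whose cohomology can be computed directly; for example, one might aim to identify the associated graded with a Koszul-type complex whose vanishing in negative degrees is a formal algebraic consequence.

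The second step is to define a chain homotopy $h$ by a local surgery at $O_\star$. Given a positive enhanced domain $E$ of negative index, the operator $h(E)$ would be obtained by enlarging or modifying $E$ along the free row or column, possibly incorporating $O_\star$ as an additional destabilization point or merging two existing destabilization points through the free cells. Because the row and column of $O_\star$ contain no other markings, positivity at all other markings is automatically preserved by the surgery. One then checks that $\delta h + h \delta = \Id$ on cocycles of negative degree, modulo periodic domains, by a local computation near $O_\star$.

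The main obstacle is that enhanced domains with $k \geq 2$ destabilization points have genuinely richer combinatorics than those with $k \leq 1$, and the positivity condition interacts nontrivially both with the coboundary and with the quotient by periodic domains. Verifying that $h$ is well-defined modulo periodic domains and that the contraction identity holds uniformly in all degrees $d < 0$ is likely to require nontrivial case analysis, perhaps by induction on $k$, with the base case $k = 2$ treated directly and the inductive step using the free marking to reduce to lower values of $k$. A possible alternative route is to attack the conjecture via a computer-assisted verification of the associated graded in small examples, then bootstrap to general grids using the stabilization invariance of $\HE^*$.
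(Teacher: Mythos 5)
This statement is a conjecture, not a theorem: the paper offers no proof of it, and explicitly says that only a weaker version is established, namely Theorem~\ref{thm:sparse} (equivalently Theorem~\ref{thm:esparse}), which applies only to \emph{sparse} grid diagrams and even then only gives vanishing in degrees $d < \min\{0,2-k\}$, not in all negative degrees. So your proposal cannot be compared to a proof in the paper; it has to stand on its own, and it does not. What you have written is a plan, not an argument: the filtration is left unspecified ("for instance by the local multiplicity\dots or by some lexicographic ordering"), the associated graded is only "hoped" to be a Koszul-type complex, the homotopy $h$ is described only as "enlarging or modifying $E$ along the free row or column," and the central identity $\delta h + h\delta = \Id$ is never checked -- you yourself flag it as "likely to require nontrivial case analysis." None of the actual content of a proof is present.

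Beyond incompleteness, several specific points are off. First, the degree of a generator $[\x,\tilde\y]$ of $\CE^*(G)$ is its index $I([\x,\tilde\y])$, not $1-k$; the index $1-k$ is only the index relevant to the polygon counts, so your opening claim that "degree $d$ forces index $1-k=d$, hence $k\geq 2$" confuses two different gradings. (That negative-degree positive pairs require $k\geq 2$ is true, but it is the content of Proposition~\ref{prop:oneO}, not a formal consequence of grading.) Second, the differential on $\CE^*(G;k)$ is pre- and post-composition with rectangles and preserves $k$ and the destabilization set $\zed$; a homotopy that "incorporates $O_\star$ as an additional destabilization point or merges destabilization points" leaves the complex entirely -- moreover $\zed$ consists of linked markings, so the free marking is never a destabilization point. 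Third, your fallback of verifying small cases and "bootstrapping using the stabilization invariance of $\HE^*$" assumes an invariance property that is not known; the paper reports only computer evidence for the conjecture. The genuinely hard point, which your sketch does not engage, is exactly where the paper's partial proof stops: the index bound for positive pairs with inner initial generator (Lemma~\ref{lemma:inner}) only gives $I \geq 2-k$ and uses sparseness in an essential way, and nobody currently knows how to push this to $I\geq 0$ using only one free marking.
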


We can prove only a weaker version of this conjecture, but one that is
sufficient for our purposes. This version applies only to grid diagrams that are {\em sparse}, in the sense that if a row contains an $X$ marking, then the adjacent rows do not contain $X$ markings (i.e., they each contain a free $O$ marking); also if a column contains an $X$ marking, then the adjacent columms do not contain $X$ markings. One way to construct a sparse diagram is to start with any grid, and then double its size by interspersing free $O$ markings along a diagonal. The result is called the  {\em sparse double} of the original grid: see Figure~\ref{fig:sparse}. 

\begin{figure}
\begin{center}
\input{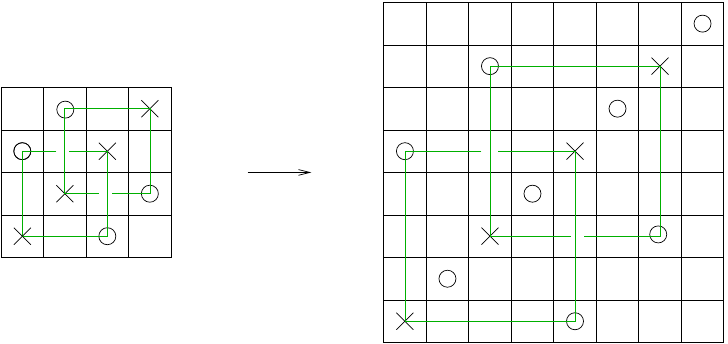_t}
\end{center}
\caption {{\bf The sparse double.} On the left we show an ordinary grid diagram for the Hopf link, with no free markings. On the right is the corresponding sparse double.}
\label{fig:sparse}
\end{figure}

We can show that if $G$ is a sparse grid diagram, then  
$\HE^d(G) = 0$ for $d< \min\{0, 2-k\}$. As a consequence, we obtain:

\begin {theorem}
\label {thm:Three}
Let $\orL \subset S^3$ be an oriented link with a framing $\Lambda$.
Let $S^3_\Lambda(L)$ be the manifold obtained by surgery on $S^3$
along $(L, \Lambda)$, and let $\ux$ be a $\spc$ structure on
$S^3_\Lambda(L)$. Then, given as input a sparse grid diagram for $L$, the
Heegaard Floer homology groups $\HFm(S^3_\Lambda(L), \ux)$,
$\HFinf(S^3_\Lambda(L), \ux)$, $\iHF^+(S^3_\Lambda(L), \ux)$ with
coefficients in $\ff =\zz/2\zz$ are algorithmically computable.
\end {theorem}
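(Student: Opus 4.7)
The plan is to combine the link surgery formula of \cite{LinkSurg} with a combinatorially defined formal complex structure on a sparse grid. Given $\orL$ with framing $\Lambda$, I would first produce a sparse grid diagram $G$ for $L$, passing if necessary to the sparse double of an arbitrary grid diagram. The link surgery formula then writes each of $\CFm(S^3_\Lambda(L), \ux)$, $\CFinf(S^3_\Lambda(L), \ux)$, and $\iCF^+(S^3_\Lambda(L), \ux)$ as an iterated mapping cone whose vertices are the link Floer complexes of $L$ and all its sublinks, with edge maps defined by counts of pseudo-holomorphic polygons in symmetric products of the grid tori. Each such polygon count corresponds, via the domain dictionary for grid diagrams, to a count of pseudo-holomorphic representatives of a positive enhanced domain of the appropriate index on $G$.

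The core of the argument is to replace these analytic counts by combinatorial ones by means of the formal complex structure formalism outlined in the introduction. A formal complex structure on $G$ is an assignment in $\Field$ to each positive enhanced domain, constrained by the algebraic relations forced by the boundary strata of the polygon moduli spaces. Any generic almost complex structure on the symmetric product produces one such formal complex structure, and the axiomatic definition of homotopy between formal complex structures is set up precisely so that homotopic ones yield chain homotopy equivalent link surgery complexes. The key technical input, which is the weakened form of Conjecture~\ref{conj:extended} that will be established for sparse grids, is the vanishing $\HE^d(G) = 0$ for $d < \min\{0, 2-k\}$. A standard obstruction-theoretic argument then shows both that formal complex structures on $G$ exist and can be constructed combinatorially by solving the compatibility relations inductively on index, and that any two are homotopic; in each inductive step the vanishing of $\HE^d(G)$ in the relevant degree kills the obstruction to extending the partial solution, and in adjacent degrees it identifies distinct solutions up to homotopy.

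With these ingredients in place the algorithm is essentially mechanical: enumerate the finitely many positive enhanced domains relevant to each index, solve the defining compatibility relations of a formal complex structure as a finite sequence of linear problems over $\Field$, assemble the link surgery mapping cone in the sector corresponding to $\ux$, and compute the homology of the resulting complex to obtain $\HFm$, $\HFinf$, and $\iHF^+$ of $(S^3_\Lambda(L), \ux)$. The main obstacle in the whole scheme is proving the vanishing of $\HE^d(G)$ for sparse $G$; this requires an explicit combinatorial analysis of the cohomology of the chain complex generated by positive enhanced domains modulo periodic domains, and the sparseness hypothesis is what makes this analysis tractable by ensuring that the $X$-markings never sit in adjacent rows or columns and hence do not interact too intricately. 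Once this vanishing is in hand, the remainder of the proof is a formal assembly of the link surgery formula, the formal complex structure formalism, and the obstruction theory.
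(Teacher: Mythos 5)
Your overall strategy is the same as the paper's: pass to a sparse grid, invoke the surgery theorem, replace the analytic polygon counts by a formal complex structure, and use the vanishing of $\HE^d(G;k)$ for sparse diagrams to show all formal complex structures are homotopic, hence give the correct (chain homotopy type of the) surgery complex. However, one step as you state it does not work: you claim that existence of a formal complex structure follows from the same obstruction-theoretic induction, with the vanishing of $\HE$ ``in the relevant degree'' killing the obstruction to extending a partial solution. The obstruction cocycle at stage $k$ is $\sum_{i=1}^{k-1} c_i \circ c_{k-i}$, which lives in $\CE^{2-k}(G;k)$, and the sparse vanishing theorem only gives $\HE^d(G;k)=0$ for $d < \min\{0,2-k\}$, i.e.\ strictly below degree $2-k$. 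So the vanishing result yields uniqueness up to homotopy (the homotopy obstructions live in degree $1-k < 2-k$, as in Lemma~\ref{lemma:1k}) but does not by itself kill the existence obstruction. The paper sidesteps this entirely: existence is free because any admissible almost complex structure $J$ on $\Sym^n(\TT)$ induces a formal complex structure $\cx(J)$, so the algorithm may simply search the finite set $\oplus_k \CE^{1-k}(G;k)$ for any solution of $d\cx = \cx \circ \cx$ with $c_1 = c_1^{\snail}$, knowing a solution exists; alternatively a step-by-step extension can be justified by comparing the partial solution with the truncation of $\cx(J)$, but not by the vanishing theorem alone. You should either quote existence from actual almost complex structures, as the paper does, or explain why the degree-$(2-k)$ obstruction class vanishes.

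A second, smaller omission concerns the final step ``compute the homology of the resulting complex.'' The complex $\C^-(G,\Lambda,\ux,\cx)$ is an infinite direct product of modules over a power series ring, so taking its homology is not a priori an algorithmic task; the paper invokes the vertical and horizontal truncation procedures of \cite[Section 6]{LinkSurg} to replace it by a quasi-isomorphic finite-dimensional complex over $\ff$, and one must also divide out the extra tensor factor $H_*(T^{n-q-\ell})$ appearing in Theorem~\ref{thm:Surgery} (and use Theorem~\ref{thm:AllVersions} for the $\HFinf$ and $\iHF^+$ versions). With these two points repaired, your argument coincides with the paper's proof.
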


The algorithm alluded to in the statement of Theorem~\ref{thm:Three}
is given in the proof.  In the above statement, $\HFm$ and $\HFinf$
are the completed versions of the usual Heegaard Floer homology groups
$\iHF^-$ and $\iHF^{\infty}$, respectively. The groups $\HFm$ and $\HFinf$
are constructed from complexes defined over the power series rings
$\Field[[U]]$ and $\Field[[U, U^{-1}]$, respectively,
cf.\ \cite[Section 2]{LinkSurg}, compare also \cite{KMBook}. More
precisely, the respective chain complexes are
$$ \CFm = \iCF^- \otimes_{\Field[U]} \Field [[U]], \ \ \CFinf = \iCF^{\infty} \otimes_{\Field[U, U^{-1}]} \Field [[U, U^{-1}].$$
One could also define a completed version of $\iHF^+$ in a similar way; but in that case, since multiplication by $U$ is nilpotent  on $\iCF^+$, the completed version is the same as the original one.

The three completed variants of Heegaard Floer homology are related by a long exact sequence:
$$ \cdots \longrightarrow \HFm \longrightarrow \HFinf \longrightarrow \iHF^+ \longrightarrow \cdots.$$

Moving to four dimensions, in \cite{LinkSurg} it is shown that every
closed four-manifold with $b_2^+ > 1$ admits a description as a
three-colored link with a framing. This description is called a {\em cut link
presentation}; see Definition~\ref{def:lp} below.

\begin {theorem}
\label {thm:Four}
Let $X$ be a closed four-manifold with $b_2^+(X) > 1$, and $\ss$ a
$\spc$ structure on $X$. Given as input a cut link presentation $(\orL =
L_1 \cup L_2 \cup L_3, \Lambda)$ for $X$ and a sparse grid diagram
for $L$, one can algorithmically compute the mixed invariant $\Phi_{X,
  \ss}$ with coefficients in $\ff =\zz/2\zz$.
\end {theorem}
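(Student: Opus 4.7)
The plan is to reduce the computation of $\Phi_{X,\ss}$ to a combinatorial model of the $2$-handle cobordism maps via the link-surgery formula of \cite{LinkSurg}, and then assemble the mixed invariant by a finite piece of linear algebra over $\ff[[U]]$.

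Recall that for $b_2^+(X)>1$ the mixed invariant is defined by choosing an admissible cut: a three-manifold $N\subset X$ separating $X$ into two $\spc$ cobordisms $W_1\colon S^3\to N$ and $W_2\colon N\to S^3$, each with $b_2^+>0$. The positivity hypothesis forces the induced maps on $\HFinf$ to vanish, so $F^-_{W_1}\colon\HFm(S^3)\to\HFm(N,\ss|_N)$ takes values in the image of the connecting map $\delta\colon\iHF^+(N,\ss|_N)[-1]\to\HFm(N,\ss|_N)$; lifting through $\delta$ and then applying $F^+_{W_2}$ produces a map $\HFm(S^3)\to\iHF^+(S^3)$ whose single nontrivial matrix entry is $\Phi_{X,\ss}$. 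A cut link presentation $(\orL=L_1\cup L_2\cup L_3,\Lambda)$ packages exactly this decomposition: the framed sublinks $L_1$ and (the reverse of) $L_2$ carry the $2$-handles building $W_1$ and $W_2$, while $L_3$ records $N$ as surgery on a sublink of $\orL$.

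The second ingredient is the link-surgery formula of \cite{LinkSurg}, which describes not only $\CFm(N,\ss|_N)$ and $\iCF^+(N,\ss|_N)$ but also the cobordism maps $F^\pm_{W_i}$, the $U$-localisation $\CFm\to\CFinf$, and the connecting map $\delta$, all as explicit sums of pseudo-holomorphic polygon counts in symmetric products of Heegaard surfaces associated to $\orL$ and its sublinks. These are precisely the index-$(1-k)$ $(k+2)$-gons attached to destabilisations at $k$ points that already drive the proof of Theorem~\ref{thm:Three}; their counts are thus controlled by positive enhanced domains. Now take a sparse grid diagram $G$ for $\orL$, replacing the input diagram by its sparse double if necessary. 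The vanishing $\HE^d(G)=0$ for $d<\min\{0,2-k\}$ established for sparse $G$ implies that any two formal complex structures on $G$ are homotopic, so we may fix one combinatorially. It supplies $\ff$-valued counts for every polygon entering the link-surgery description, and therefore produces the chain-level data $\CFm(S^3)$, $\CFm(N,\ss|_N)$, $\iCF^+(N,\ss|_N)$, $\iCF^+(S^3)$, together with $F^-_{W_1}$, $F^+_{W_2}$, and $\delta$. Assembling these into the mixed-invariant diagram is a finite linear-algebra computation over $\ff[[U]]$, whose answer is independent of the choices by the homotopy invariance afforded by the $\HE^*$-vanishing.

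The main obstacle is extending the machinery of Theorem~\ref{thm:Three}, which handled differentials and the three-manifold invariants, to the entire family of chain maps required here. Concretely, one must verify that the cobordism maps $F^\pm_{W_i}$ and the long-exact-sequence maps from \cite{LinkSurg} fit within the formal-complex-structure framework of the paper, so that homotopy of formal structures on $G$ genuinely translates into chain-homotopy equivalence of the full mixed-invariant assembly. Since all of these maps are built from the same pseudo-holomorphic polygon counts on enhanced domains as the differentials, the conceptual input is already in place; the heavy lifting is the combinatorial bookkeeping tracking filtrations, $\spc$-structures, and compatibilities across the multiple cobordisms and destabilisations involved.
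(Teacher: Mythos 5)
Your overall strategy---run everything through the link surgery formula of \cite{LinkSurg}, fix a formal complex structure on a sparse grid, and use the vanishing of $\HE^d(G;k)$ for $d<\min\{0,2-k\}$ to get independence of choices---is the same as the paper's, which deduces the theorem from Theorems~\ref{thm:Cobordisms} and~\ref{thm:AllVersions} together with Proposition~\ref{prop:mixed}. However, you misread the structure of a cut link presentation (Definition~\ref{def:lp}), and this is not cosmetic. There, $L_1$ does \emph{not} carry two-handles: surgery on $(L_1,\Lambda_1)$ yields $\#^m(S^1\times S^2)$ and encodes the one-handle cobordism $W_1$; the two-handle cobordisms are $W_2$ and $W_3$, given by surgery on $L_2$ and $L_3$; the admissible cut is $N=S^3_{\Lambda_1\cup\Lambda_2}(L_1\cup L_2)$; and there is a final three-handle cobordism $W_4$ to $S^3$. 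Your picture, in which $X$ punctured at two points is a composition of two-handle cobordisms $S^3\to N\to S^3$, is not available for a general $X$ (one cannot dispense with the one- and three-handles), so the grid machinery only gives you the mixed map $F^{\mix}_{W_2\cup W_3}$ from $\HFm(\#^m(S^1\times S^2))$ to $\iHF^+(\#^{m'}(S^1\times S^2))$, not a map between the Floer homologies of $S^3$. (On the other hand, the compatibility you flag as ``the main obstacle'' is not something you need to re-derive: Theorem~\ref{thm:Cobordisms} already identifies the two-handle cobordism maps with inclusions of subcomplexes of the single surgery complex $\C^-(G,\Lambda)$, so no polygon counts beyond those already packaged in a formal complex structure are needed.)

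The second, related gap is the step your ``finite linear-algebra computation'' glosses over: the combinatorial model computes the relevant homologies only up to an unidentified isomorphism and up to the tensor factor $V=H_*(T^{n-q-\ell})$, cf.\ \eqref{eq:isoV}, and the one-handle and three-handle maps are not themselves produced by the grid. So to evaluate the mixed map you must know which class in $H_*(\C^-(G,\Lambda)^{L_1,\tt|_{W_2\cup W_3}})$ corresponds to $F^-_{W_1}(\Theta_-)$, i.e.\ to $\Theta_{\max}^m\otimes\Theta_V$; without that, the quantity you propose to compute is not yet well defined. The paper resolves this in Proposition~\ref{prop:mixed}: the maximal-degree elements of $\HFm(\#^m(S^1\times S^2))$ and of $V$ are unique, so $\Theta_{\max}^\Gamma$ can be located purely by a degree computation, and then $\Phi_{X,\ss}=1$ if and only if $U^{d(\ss)/2}\cdot F^{\mix}_{\Gamma,\ss}(\Theta_{\max}^\Gamma)\neq 0$. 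Incorporating the correct cobordism decomposition $W_1\cup W_2\cup W_3\cup W_4$ and this degree-based identification of the input vector would close the gap and bring your argument in line with the paper's proof.
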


Although Theorems~\ref{thm:Three} and \ref{thm:Four} give
combinatorial procedures for calculating the respective invariants, in
practice our algorithms are very far from being effective. This is
especially true since we need to double the size of an ordinary grid in order to
arrive at a sparse one. However, one could just assume the truth of
Conjecture~\ref{conj:extended}, and try to do calculations using any grid with at least one free marking. This is still not effective, and it remains an interesting challenge
to modify the algorithm in such a way as to make it more suitable for
actual calculations. We remark that, in the case of the hat version of
knot Floer homology, the algorithm from \cite{MOS} has been
implemented on computers, see~\cite{BaldwinGillam, Beliakova, Droz};
at present, one can calculate these groups for links with grid number
up to $13$.

Another application of the methods in this paper is to describe combinatorially the pages of the link surgeries spectral sequence from \cite[Theorem 4.1]{BrDCov}; see Section~\ref{sec:spectral} and Theorem~\ref{thm:SpectralComb} below for the relevant discussion. Recall that this spectral sequence can be used to give a relationship between Khovanov homology to the Heegaard Floer homology of branched double covers. In the setting of $\widehat{\iHF}$, the spectral sequence can also be described combinatorially using bordered Floer homology; see \cite{LOTbrcov}, \cite{LOTbrcov2}.

This paper is organized as follows. In Section~\ref{sec:Surgery} we
state the results from \cite{LinkSurg} about how $\HFm$ of an integral
surgery on a link $L$ (and all the related invariants) can be expressed in terms of counts of
holomorphic polygon counts on the symmetric product of a grid (with at least one free marking).  In
Section~\ref{sec:enhanced} we define enhanced domains, and associate
one to each homotopy class of polygons on the symmetric product. We
also introduce the positivity condition on enhanced domains, which is
necessary in order for the domain to have holomorphic
representatives. In Section~\ref{sec:comps} we associate to a grid $G$  
the complex $\CE^*(G)$ whose generators are positive enhanced domains
modulo periodic domains. We show that, if
Conjecture~\ref{conj:extended} is true, then all possible ways of
assigning holomorphic polygon counts to enhanced domains are
homotopic. We explain how this would lead to a combinatorial
description of the Heegaard Floer invariants. In
Section~\ref{sec:sparse} we prove the weaker (but sufficient for our
purposes) version of the conjecture, which is applicable to sparse grid diagrams.

Throughout the paper we work over the field $\Field = \zz/2\zz$.

\medskip \textbf {Acknowledgements.} The first author would like to
thank the mathematics department at the University of Cambridge for
its hospitality during his stay there in the Spring of 2009.  We would
also like to thank Robert Lipshitz and Zolt{\'a}n Szab{\'o} for
interesting conversations, and the referees for comments on the paper.

\section{The surgery theorem applied to grid diagrams}
\label {sec:Surgery}

Our main goal here is to present the statement of
Theorem~\ref{thm:Surgery} below, which expresses the Heegaard Floer
homology $\HFm$ of an integral surgery on a link in terms of a grid
diagram for the link (or, more precisely, in terms of holomorphic
polygon counts on a symmetric product of the grid). We also state
similar results for the cobordism maps on $\HFm$ induced by two-handle
additions (Theorem~\ref{thm:Cobordisms}), for the other completed
versions of Heegaard Floer homology (Theorem~\ref{thm:AllVersions}),
and for the mixed invariants of four-manifolds
(Proposition~\ref{prop:mixed}). 

The proofs of all the results from this section are given in \cite{LinkSurg}. 

\subsection {Hyperboxes of chain complexes}
\label {sec:hyper}
We start by summarizing some homological algebra from \cite[Section 5]{LinkSurg}.

When $f$ is a function, we denote its $n\th$ iterate by $ f^{\circ n}$, i.e., $f^{\circ 0} = id, \ f^{\circ 1} = f, \ f^{\circ (n+1)} = f^{\circ n} \circ f$.  

For $\dd = (d_1, \dots, d_n) \in (\Z_{\ge 0})^n$ a collection of nonnegative integers, we set
$$  \E(\dd) =  \{ \eps = (\eps_1, \dots, \eps_n) \ | \  \eps_i \in \{0,1, \dots, d_i\}, \ i=1, \dots, n \}. $$

In particular, $ \E_n =  \E(1, \dots, 1) = \{0,1\}^n$ is the set of vertices of the $n$-dimensional unit hypercube. 

For $\eps = (\eps_1, \dots, \eps_n)\in \E(\dd)$, set
$$ \| \eps \| = \eps_1 + \dots + \eps_n.$$

We can view the elements of $\E(\dd)$ as vectors in $\R^n$. There is a partial ordering on $\E(\dd)$, given by $\eps' \leq \eps \iff \forall i, \ \eps'_i \leq \eps_i$. We write $\eps' < \eps$ if $\eps' \leq \eps$ and $\eps' \neq \eps$. We say that two multi-indices $\eps, \eps'$ with $\eps \leq \eps'$ are {\em neighbors} if $\eps' - \eps \in \E_n$, i.e., none of their coordinates differ by more than one.

We define an {\em $n$-dimensional hyperbox of chain complexes} $H
= \bigl( (C^{\eps})_{\eps \in \E(\dd)}, (\De^{\eps})_{\eps \in \E_n}
\bigr)$ of size $\dd \in (\Z_{\ge 0})^n$ to consist of a collection of
$\Z$-graded vector spaces 
$$ (C^{\eps})_{\eps \in \E(\dd)}, \ \  C^{\eps} = \bigoplus_{* \in \Z} C^{\eps}_*,$$ 
together with a collection of linear maps
$$ \De^{\eps}  : C_*^{\eps^0} \to C_{*+\| \eps \|-1 }^{\eps^0 + \eps},$$
defined for all $\eps^0 \in \E(\dd)$ and $\eps \in \E_n $ such that $\eps^0 + \eps \in \E(\dd)$ (i.e., the multi-indices of the domain and the target are neighbors). The maps $\De^\eps$ are required to satisfy the relations
\begin {equation}
\label {eq:d2}
 \sum_{\eps' \leq \eps}  \De^{\eps - \eps'} \circ \De^{\eps'}   = 0,
 \end {equation}
for all $\eps \in \E_n$.  If $\dd = (1, \dots, 1)$, we say that $H$ is a {\em hypercube of chain complexes}.

Note that $\De^{\eps}$ in principle also depends on $\eps^0$, but we omit that from notation for simplicity. Further, if we consider the total complex
$$ C_* =  \bigoplus_{\eps \in \E(\dd)} C^{\eps}_{* + \|\eps\|}, $$
we can think of $ \De^{\eps}$ as a map from $C_*$ to itself, by extending it to be zero when is not defined. Observe that $\De = \sum \De^{\eps}: C_* \to C_{*-1}$ is a differential.

Let $H= \bigl( (C^{\eps})_{\eps \in \E(\dd)}, (\De^{\eps})_{\eps \in \E_n} \bigr)$ be an $n$-dimensional hyperbox of chain complexes. Let us imagine the hyperbox $[0, d_1] \times \dots \times [0, d_n]$ to be split into $d_1d_2 \cdots d_n$ unit hypercubes. At each vertex $\eps$ we see a chain complex $(C^{\eps}, D^{(0,\dots, 0)})$. Associated to each edge of one of the unit hypercubes is a chain map. Along the two-dimensional faces we have chain homotopies between the two possible ways of composing the edge maps, and along higher-dimensional faces we have higher homotopies. 

There is a natural way of turning the hyperbox $H$ into an $n$-dimensional hypercube $\hat H = (\hat C^{\eps}, \hat \De^{\eps})_{\eps \in \E_n}$, which we called the {\em compressed hypercube} of $H$. The compressed hypercube has the property that along its $i\th$ edge we see the composition of all the $d_i$ edge maps on the $i\th$ axis of the hyperbox.

In particular, if $n=1$, then $H$ is a string of chain complexes and chain maps: 
$$ C^{(0)} \xrightarrow{\De^{(1)}} C^{(1)}  \xrightarrow{\De^{(1)}}  \cdots  \xrightarrow{\De^{(1)}} C^{(d)}, $$
and the compressed hypercube $\hat H$ is
$$  C^{(0)}  \xrightarrow{\left(\De^{(1)}\right)^{\circ d}} C^{(d)}.$$

For general $n$ and $\dd = (d_1, \dots, d_n)$, the compressed hypercube $\hat H$ has at its vertices the same complexes as those at the vertices of the original hyperbox $H$:
$$ \hat C^{(\eps_1, \dots, \eps_n)} = C^{(\eps_1d_1, \dots, \eps_n d_n)}, \ \eps = (\eps_1, \dots, \eps_n) \in \E_n.$$
    
If along the $i\th$ coordinate axis in $H$ we have the edge maps $f_i=\De^{(0, \dots, 0, 1, 0, \dots, 0)}$, then along the respective axis in $\hat H$ we see $f_i^{\circ d_i}$. Given a two-dimensional face of $H$ with edge maps $f_i$ and $f_j$ and chain homotopies $f_{\{i, j\}}$ between $f_i \circ f_j$ and $f_j \circ f_i$, to the respective compressed face in $\hat H$ we assign the map
$$\sum_{k_i=1}^{d_i} \sum_{k_j=1}^{d_j} f_i^{\circ(k_i-1)} \circ f_j^{\circ (k_j-1)} \circ f_{\{i,j\}} \circ f_j^{\circ (d_j - k_j)} \circ f_i^{\circ (d_i - k_i)},$$
which is  a chain homotopy between $f_i^{\circ d_i} \circ f_j^{\circ d_j}$ and $f_j^{\circ d_j} \circ f_i^{\circ d_i}$. The formulas for what we assign to the higher-dimensional faces in $\hat H$ are more complicated, but they always involve sums of compositions of maps in $H$. 

Let  $\leftexp{0}{H}$ and $\leftexp{1}{H}$ be two hyperboxes
of chain complexes, of the same size $\dd \in (\Z_{\ge 0})^n$. A {\em
  chain map} $F : \leftexp{0}{H} \to \leftexp{1}{H}$  is
defined to be a collection of linear maps 
$$F_{\eps^0}^\eps: \leftexp{0}{C}^{\eps^0}_* \to \leftexp{1}{C}^{\eps^0 + \eps} _{*+ \| \eps \|},$$
satisfying 
$$\sum_{\eps' \leq \eps} \bigl( \De^{\eps - \eps'}_{\eps^0 + \eps'} \circ F^{\eps'}_{\eps^0}  +   F^{\eps - \eps'}_{\eps^0 + \eps'} \circ \De^{\eps'}_{\eps^0} \bigr)  = 0,$$
for all $\eps^0 \in \E(\dd), \eps \in \E_n$ such that $\eps^0 + \eps \in \E(\dd)$. In particular, $F$ gives an ordinary chain map between the total complexes $\leftexp{0}{C}$ and $\leftexp{1}{C}$. 

Starting with from here, we can define chain homotopies and chain homotopy equivalences between hyperboxes by analogy with the usual notions between chain complexes.

The construction of $\hat H$ from $H$ is natural in the following sense. Given a chain map $F : \leftexp{0}{H} \to \leftexp{1}{H}$, there is an induced chain map $\hat F : {}^{0}{\hat H} \to {}^{1}{\hat H}$ between the respective compressed hypercubes. Moreover, if $F$ is a chain homotopy equivalence, then so is $\hat F$.

\subsection {Chain complexes from grid diagrams}
\label {subsec:ags}
Consider an oriented, $\ell$-component link $\orL \subset S^3$. We denote the components of $\orL$ by
$\{ L_i\}^{\ell}_{i=1}$. Let
$$ \H(L)_i = \frac{\lk(L_i, L - L_i)}{2} + \Z \subset \Q, \ \ \H(L) = \bigtimes_{i=1}^\ell \H(L)_i,$$  
where $\lk$ denotes linking number. Further, let
$$ \bH(L)_i = \H(L)_i \cup \{-\infty, + \infty\}, \ \ \bH(L) = \bigtimes_{i=1}^{\ell} \bH(L)_i.$$

Let $G$ be a toroidal grid diagram representing the link $\orL$ and having at least one free marking, as in \cite[Section 15.1]{LinkSurg}. Precisely, $G$
consists of a torus $\TT$, viewed as a square in the plane with the
opposite sides identified, and split into $n$ annuli (called rows) by
$n$ horizontal circles $\alpha_1, \dots, \alpha_n$, and into $n$ other
annuli (called columns) by $n$ vertical circles $\beta_1, \dots,
\beta_n$. Further, we are given some markings on the torus, of two
types: $X$ and $O$, such that:
\begin {itemize}
\item each row and each column contains exactly one $O$ marking;
\item each row and each column contains at most one $X$ marking;
 \item if the row of an $O$ marking contains no $X$ markings, then the column of that $O$ marking  contains no $X$ markings either. An $O$ marking of this kind is called a {\em free marking}. We assume that the number $q$ of free markings is at least $1$.
  \end {itemize}

 Observe that $G$ contains exactly $n$ $O$ markings and exactly $n-q$ $X$ markings. A marking that is not free is called {\em linked}. The number $n$ is called  the {\em grid number} or the {\em size} of $G$. 

  We draw horizontal 
arcs between the linked markings in the same row (oriented to go from the $O$
to the $X$), and vertical arcs between the linked markings in the same column
(oriented to go from the $X$ to the $O$). Letting the vertical arcs be
overpasses whenever they intersect the horizontal arcs, we obtain a
planar diagram for a link in $S^3$, which we ask to be the given link $\orL$. 

We let $\S = \S(G)$ be the set of matchings between the horizontal and vertical circles in $G$.  Any $\x \in \S$ admits a Maslov grading $M(\x) \in \Z$ and an Alexander multi-grading
$$A_i(\x) \in \H(L)_i, \ i \in \{1, \dots, \ell \}.$$ 
(For the precise formulas for $M$ and $A_i$, see \cite{MOST}, where they were written in the context of grid diagrams without free markings. However, the same formulas also apply to the present setting.)

For $\x, \y \in \S$, we let $\EmptyRect(\x, \y)$ be the space of empty
rectangles between $\x$ and $\y$, as in \cite{MOST}.  Specifically, a
{\em rectangle} from $\x$ to $\y$ is an embedded rectangle $r$ in the
torus, whose lower left and upper right corners are coordinates of
$\x$, and whose lower right and upper left corners are coordinates of
$\y$; and moreover, if all other coordinates of $\x$ coincide with all
other coordinates of $\y$. We say that the rectangle is {\em empty} if
its interior contains none of the coordinates of $\x$ (or $\y$).

For $r \in
\EmptyRect(\x, \y)$, we denote by $O_j(r)$ and $X_j (r) \in \{0,1\}$
the number of times $O_j$ (resp.\ $X_j$) appears in the interior of
$r$. Let $\Os_i$ and $\Xs_i$ be the set of $O$'s (resp.\ $X$'s)
belonging to the component $L_i$ of the link. The $i^{th}$ coordinate of the Alexander multi-grading is characterized uniquely up to an overall additive constant by the property that
$$ A_i(\x) - A_i(\y) = \sum_{j \in \Xs_i} X_j(r) - \sum_{j \in \Os_i} O_j(r),$$
where here $r$ is any rectangle from $\x$ to $\y$.

The {\em Floer complex} $C^-(G)$ associated to the grid $G$  is the free module over $$\Ring = \Field[[U_1, \dots, U_n]]$$ generated by the $n!$ elements of $\S$, and endowed with the differential:
\begin{equation}
\label {eq:du}
\partial \x = \sum_{\y \in \S} \sum_{r\in \EmptyRect(\x, \y)} U_1^{O_1(r)} \cdots  U_n^{O_n(r)} \y.
\end {equation}

The Maslov grading $M$ gives the homological grading on $C^-(G)$, such that each variable $U_i$ decreases it by $2$. The Alexander functions $A_i$ give filtrations on $C^-(G)$, such that $U_i$ decreases the filtration level $A_i$ by one, and preserves all filtrations $A_j$ for $j\neq i$.

Note that we can view $C^-(G)$ as a Lagrangian Floer
chain complex $\CFm(\Ta, \Tb)$ in a symplectic manifold, the symmetric product $\Sym^n(\TT)$. The two Lagrangian submanifolds are the tori $\Ta =
\alpha_1 \times \dots \times \alpha_n$, the product of horizontal
circles, and $\Tb = \beta_1 \times \dots \times \beta_n$, the product
of vertical circles.  Empty rectangles are the same as holomorphic strips
of index one in $\Sym^n(\TT)$, with boundaries on $\Ta$ and $\Tb$; compare \cite{MOS}.

Given $\s = (s_1, \dots, s_\ell) \in \bH(L)$, we denote by
$$ \Am(G, \s) \subseteq C^-(G)$$
the subcomplex given by $A_i \leq s_i$, $i=1, \dots, \ell$. 

\begin {remark}
When $L=K$ is a knot, the complex $\Am(G, s)$ is a multi-basepoint, completed version of the
subcomplex $A_s^- = C\{\max(i, j-s) \leq 0\}$ of the knot Floer
complex $\CFKi(Y, K)$, in the notation of \cite{Knots}. A similar complex 
$A_s^+ = C\{\max(i, j-s) \geq 0\}$ appeared in the integer surgery formula in \cite{IntSurg}, stated there in terms of $\iHF^+$ rather than $\HFm$.
\end {remark}

\begin{remark}
\label{rmk:notfree}
Observe that $\Am(G, \s)$ is typically not a free $\Ring$-complex. This is due to the presence of several $O$ markings on the same link component. For example, suppose $G$ represents a knot (that is, $\ell =1$) and has size at least $2$. If $\x \in G$ satisfies $A_1(\x) = s_1 +1$, then $\x$ does not appear in the subcomplex $\Am(G, (s_1))$, but both $U_1\x$ and $U_2 \x$ do, and are related by $U_2(U_1 \x) = U_1(U_2\x)$.
\end{remark}

Following \cite[Section 13]{LinkSurg}, we will construct a resolution of $\Am(G, \s)$, that is, a free $\Ring$-module $\Chain^-(G, \s)$ together with a surjective quasi-isomorphism $\tilde{P}: \Chain^-(G, \s) \to \Am(G, \s)$.

First, we introduce some more notation. For each $i$, we re-label the
elements of $\Os_i$ and $\Xs_i$ as
$$ O_{i,1}, X_{i,1}, O_{i,2}, X_{i,2}, \dots, O_{i, p_i}, X_{i, p_i},$$
in the cyclic order in which they appear on $L_i$. We denote by $O_{i,j}(r), X_{i,j}(r)$ the multiplicities of a rectangle at the given marking, and we will sometimes use the alternate name $U_{i,j}$ for the $U$ variable associated to $O_{i,j}$. Further, we arrange so that the free markings are labeled $O_{n-q+1}, \dots, O_n$. 

Consider the differential graded algebra
$$  \Ringbig^{Y} = \Field[[(U_{i,j})_{\substack{1\leq i \leq \ell, \\ 1 \leq j \leq p_i}} (U_{j})_{n-q+1\leq j \leq p}]][(V_{i,j}, Y_{i,j})_{\substack{1\leq i \leq \ell, \\ 2 \leq j \leq p_i}}]'/(Y_{i,j}^2=0, \del Y_{i,j} = U_{i,j} + U_{i,1}V_{i,j}).$$
Here, the prime signifies that the elements of the algebra are infinite sums of monomials $\ms$ in the $U$, $V$ and $Y$ variables, with the property that, for each $i=1, \dots, \ell$,
\begin{equation}
\label{eq:bddness}
- e_{\ms}(U_{i, 1})+ \sum_{j=2}^{p_i} e_{\ms}(V_{i,j}) 
\text{  is bounded above,} 
\end{equation}
where $e_{\ms}(Z)$ denotes the exponent of a variable $Z$ in the monomial $\ms$.

We define an intermediate complex $\Ccint(G)$, which is the dg module over $\Ringbig^Y$ generated by $\x \in \S$, with differential 
$$
 \del \x = \sum_{\y \in \S}  \sum_{r\in \EmptyRect(\x, \y)}  \prod_{i=1}^\ell  U_{i,1}^{X_{i,1}(r) + \ldots + X_{i,p_i}(r)} V_{i,2}^{O_{i,2}(r)} \dots  V_{i,p_i}^{O_{i,p_i}(r)} \cdot \prod_{j=n-q+1}^n U_{j}^{O_{ j}(r)} \y.
$$

This complex admits filtrations $\FF_i$, for $1 \leq i \leq \ell$, such that
$$ \FF_i(\x) = -A_i(\x), \ \FF_i(U_{i,1})=-1, \ \FF(V_{i,j})=1, \ j \geq 2,$$
and $\FF_i$ takes all the other $U$, $V$ and $Y$ variables to zero (i.e, the action of those variables preserves $\FF_i$).

For $\s=(s_1, \dots, s_\ell) \in \H(L)$, we define $\Chain^-(G, \s)$ to be the filtered part of $\Ccint(G)$ given by $ \FF_i \leq s_i$, $i=1, \dots, \ell.$ The complex $\Chain^-(G, \s)$ is a resolution of $\Am(G, \s)$, via the $\Ring$-linear projection 
\begin{equation}
\label{eq:tildeP}
\tilde{P}: \Chain^-(G, \s) \to \Am(G, \s),
\end{equation} 
$$ \tilde{P}\Bigl(\prod_{i,j} V_{i,j}^{n_{i,j}}  Y_{i,j}^{a_{i,j}} \x \Bigr) = 
\begin{cases}
\prod_{i=1}^{\ell} U_{i,1}^{A_i(\x) - s_i - \sum_{j=2}^{p_i} n_{i,j}} U_{i,2}^{n_{i,2}} \dots U_{i,p_i}^{n_{i,p_i}} \x  & \text{if all } a_{i,j} = 0, \\
 0 & \text{otherwise.}\end{cases}$$
 
Note that $\Chain^-(G, \s)$, unlike $\Am(G, \s)$, is a free $\Ring$-module. Indeed, since only one $U$ variable  (namely, $U_{i,1}$) changes the $\FF_i$-filtration level, for any given $ \prod V_{i,j} \prod Y_{i,j} \cdot \x$ there exist minimal exponents $n_{i,1}$ of the $U_{i,1}$ such  that $\prod U_{i,1}^{n_{i,1}} \prod V_{i,j} \prod Y_{i,j} \cdot \x$ are in $\Chain^-(G, \s)$; these are the free generators. For example, in the situation described in Remark~\ref{rmk:notfree}, in filtration level $s_1$ we have the generators $\x$ and $V\x$ (for some $V= V_{1,j}$), with $\tilde{P} (\x)= U_1 \x$ and $\tilde{P} (V\x)= U_2 \x$.

We will refer to $\Am(G, \s)$, $\Ccint(G)$ and $\Chain^-(G, \s)$ as various {\em generalized Floer complexes} associated to the grid $G$. Further variations of these constructions will be explored in Sections~\ref{sec:inclusions} and \ref{sec:descent2}.

Finally, we mention the {\em curved link Floer complex} that appeared in \cite{ZemkeHFL}. Recall that a {\em curved complex} (or a {\em matrix factorization}) $(C, \del)$ over a ring $\tR$ is a $\tR$-module $C$ with a endomorphism $\del: C \to C$ such that $\del^2=k \cdot \id,$ for some $k$ in the base ring $\tR$. In our case, we take
$$ \tR= \Ring[[(U_{i,j}, \tU_{i,j})_{\substack{1\leq i \leq \ell, \\ 1 \leq j \leq p_i}} (U_{j})_{n-q+1\leq j \leq p}]].$$
We define the curved link Floer complex $\CFLc(G)$ to be the $\tR$-module freely generated by $\x \in \S$, with the endomorphism
$$ \del \x = \sum_{\y \in \S}  \sum_{r\in \EmptyRect(\x, \y)}  \prod_{i=1}^\ell  \prod_{j=1}^{p_i} U_{i,j}^{O_{i,j}(r)} \tU_{i,j}^{X_{i,j}(r)} \cdot \prod_{j=n-q+1}^n U_{j}^{O_{ j}(r)} \y.
$$
We have
\begin{equation}
\label{eq:delcurved}
 \del^2 = \sum_{i=1}^{\ell} (U_{i,1} \tU_{i,1} + \tU_{i,1} U_{i,2} + U_{i,2} \tU_{i,2} + \dots + U_{i,p_i} \tU_{i,p_i} + \tU_{i, p_i} U_{i,1}).
 \end{equation}
See for example \cite[Lemma 2.1]{ZemkeQuasi}; compare also \cite{FOOO1}, \cite{KR1}. The contributions to $\del^2$ come from the index two periodic domains, that is, the rows and columns of the grid.

There is a homological (Maslov) grading on $\CFLc(G)$, just as in the case of $C^-(G)$, which is preserved by the action of the $\tU$ variables.

\subsection {Summary of the construction}
\label {sec:summary}
For the benefit of the reader, before moving further we include here a
short summary of Sections~\ref{sec:subred}--\ref{sec:surgery}
below. The aim of these sections is to be able to state the Surgery
Theorem~\ref{thm:Surgery}, which expresses $\HFm$ of an integral
surgery on a link $\orL \subset S^3$ (with framing $\Lambda$) as the
homology of a certain chain complex $\C^-(G, \Lambda)$ associated to a
grid diagram $G$ for~$\orL$ (with at least one free marking). Given a sublink $M \subseteq L$, we let
$G^{L-M}$ be the grid diagram for $L-M$ obtained from $G$ by deleting
all the rows and columns that support components of $M$. Roughly, the
complex $\C^-(G, \Lambda)$ is built as an $\ell$-dimensional hypercube
of complexes. Each vertex corresponds to a sublink $ M \subseteq L$,
and the chain complex at that vertex is the direct product of generalized 
 Floer complexes $\Chain^-(G^{L-M}, \s)$, over all possible
values $\s$; the reader is encouraged to peek ahead at the
expression~\eqref{eq:cgl}.

The differential on $\C^-(G, \Lambda)$ is a sum of some maps denoted
$\Phi^{\orM}_\s$, which are associated to oriented sublinks $\orM$:
given a sublink $M$, we need to consider all its possible
orientations, not just the one induced from the orientation of
$\orL$. When $M$ has only one component, the maps $\Phi^{\orM}_\s$ are
chain maps going from a generalized Floer complex associated to $G^{L'}$ (for
$L'$ containing $M$) to one associated to $G^{L'-M}$. When $M$ has two
components, $\Phi^{\orM}_\s$ are chain homotopies between different
ways of composing the respective chain maps removing one component at
a time; for more components of
$M$ we get higher homotopies. The maps fit together in a hypercube of
chain complexes, as in Section~\ref{sec:hyper}.

Each map $\Phi^{\orM}_\s$ will be a composition of three kinds of maps, cf. Equation~\eqref{eq:phims} below: a ``projection-inclusion map'' $ \I^{\orM}_\s$, a ``descent map'' $ \hat\De_{p^{\orM}(\s)}^{\orM}$, and an isomorphism $ \Psi_{\psi^{\orM}(\s)}^{\orM}$. (Here, $p^{\orM}$ refers to a natural projection from the set $\H(L)$ to itself, and $\psi^{\orM}$ to a projection from $\H(L)$ to $\H(L-M)$; see Section~\ref{sec:subred}.) The  maps $ \I^{\orM}_\s$ are defined in Section~\ref{sec:inclusions}, and go between different generalized Floer complexes associated to the same grid. The descent maps $\hat \De_{p^{\orM}(\s)}^{\orM}$ are defined in Sections~\ref{sec:descent1}-\ref{sec:descent3}, and go from a generalized Floer complex for a grid $G^{L'}$ to one associated to another diagram, which is obtained from the grid by handlesliding some beta curves over others. Finally, the isomorphisms $\Psi_{\psi^{\orM}(\s)}^{\orM}$ relate these latter complexes to generalized Floer complexes for the smaller grid $G^{L'-M}$; these isomorphisms are defined in Section~\ref{subsec:ks}.

The main difficulty lies in correctly defining the descent maps. When the link $M$ has one component, they are constructed by composing some transition maps (involving just counts of empty rectangles on $G^{L'}$) with some maps that count pseudo-holomorphic triangles. For example, there is a triangle map that corresponds to a single handleslide, done so that the new beta curve encircles a marking $Z \in M$. The type ($O$ or~$X$) of the marking $Z$ is determined by the chosen orientation $\orM$ for $M$. More generally, by counting pseudo-holomorphic polygons with more sides, we define maps corresponding to a whole set $\zed$ of markings; see Section~\ref{sec:PolygonMaps} below. When $\zed$ has two markings, the respective maps count quadrilaterals, and are chain homotopies between triangle maps. In general, the polygon maps fit into a hypercube of chain complexes. To construct the descent maps for a sublink (which we do in Sections~\ref{sec:descent1}-\ref{sec:descent3}), we build a hyperbox out of both transition maps and polygon maps (as well as hybrids of these), and apply the compression procedure mentioned in Section~\ref{sec:hyper}.

\subsection{Sublinks and reduction}
\label {sec:subred}
Suppose that $M\subseteq L$ is a sublink. We
choose an orientation on $M$ (possibly different from the one induced
from $\orL$), and denote the corresponding oriented link by $\orM$. We
let $I_+(\orL, \orM)$ (resp.\ $I_-(\orL, \orM)$) be the set of
indices $i$ such that the component $L_i$ is in $M$ and its
orientation induced from $\orL$ is the same as (resp.\ opposite to) the
one induced from $\orM$. We let 
$$ M_{\pm} = \bigcup_{i \in I_{\pm}(\orL, \orM)} L_i,$$
so that $M$ is the disjoint union of $M_+$ and $M_-$. We also denote by
$$ I(\orL, \orM) = I_+(\orL, \orM) \cup I_-(\orL, \orM)$$
the set of all indices $i$ with $L_i \subseteq M$.

For $i \in \{1, \dots, \ell\}$, we define a map $p_i^{\orM} : \bH(L)_i
\to \bH(L)_i $ by
$$ p_i^{\orM}(s) = 
\begin{cases}
+\infty & \text{ if } i \in I_+(\orL, \orM), \\
-\infty & \text{ if } i \in I_-(\orL, \orM), \\
s & \text{ otherwise.}
\end {cases}$$

Then, for $\s = (s_1, \dots, s_\ell) \in \bH(L)$, we set
 $$p^{\orM} (\s)= \bigl(p_1^{\orM}(s_1), \dots, p_\ell^{\orM}(s_\ell)\bigr).$$
 
 Let $N$ be the complement of the sublink $M$ in $L$. We define the {\em reduction} $r_{\orM}(G)$ of the grid diagram $G$ with respect to the sublink $\orM$ to be obtained from $G$ by deleting the basepoints in $\Xs_i$ for $i \in I_+(\orL, \orM)$, deleting the basepoints in $\Os_i$ for $i\in I_-(\orL, \orM)$, and viewing the basepoints in $\Xs_i$ as free $O$ basepoints, for $i\in I_-(\orL, \orM)$. Note that $r_{\orM}(G)$ is a grid diagram for the link $N$.
  
Thus, if we let $\Xs^{\orM} \subseteq \Xs$ be the subset consisting of the $X$ basepoints on $L - M$, this is exactly the set of remaining $X$ basepoints on $r_{\orM}(G)$. The set of $O$ basepoints on $r_{\orM}(G)$ is 
\begin{equation}
\label{eq:newXO}
 \Os^{\orM} := \Bigl( \Os - \bigcup_{i \in I_-(\orL, \orM)} \Os_i \Bigr) \cup  \bigcup_{i \in I_-(\orL, \orM)} \Xs_i.
 \end{equation}

We define a reduction map 
\begin {equation}
\label {eq:psic}
 \psi^{\orM} : \bH(L) \longrightarrow \bH(N)
 \end {equation}
as follows. The map $\psi^{\orM}$ depends only on the summands $\bH(L)_i$ of $\bH(L)$ corresponding to $L_i \subseteq N$. Each of these $L_i$'s appears in $N$ with a (possibly different) index $j_i$, so there is a corresponding summand $\bH(N)_{j_i}$ of $\bH(N)$.  
We then set
$$ \psi^{\orM}_i : \bH(L)_i \to \bH(N)_{j_i}, \ \ s_i \mapsto s_i - \frac{\lk(L_i, \orM)}{2},$$
where $L_i$ is considered with the orientation induced from $\orL$, while $\orM$ is with its own orientation. We then define $\psi^{\orM}$ to be the direct sum of the maps $\psi_i^{\orM}$, pre-composed with projection  to the relevant factors. Note that $\psi^{\orM} = \psi^{\orM} \circ p^{\orM}$.

The Alexander filtrations behave nicely with respect to reduction maps. If a generator $\x \in \S(G)$ has filtration level
$$ \s = (A_1(\x), \dots, A_\ell(\x)),$$
then the same generator has multi-grading $\psi^{\orM}(\s)$ when viewed as a generator for the reduced grid $r_{\orM}(G)$. Hence, for example, we have an identification of generalized Floer complexes
$$
\begin {CD}
\Am(G, p^{\orM}(\s)) @>{\cong}>> \Am(r_{\orM}(G), \psi^{\orM}(\s)).
\end {CD}
 $$
 
 \subsection{Projection-inclusion maps}
\label {sec:inclusions}
Let $\orM$ be an oriented sublink of $\orL$, as in the previous subsection. We refine the construction of the resolution $\Chain^-(G, \s)$ from Section~\ref{subsec:ags} by introducing a relative version, denoted $\Chain^-(G, \orM, p^{\orM} (\s))$. This version looks like
\begin{itemize}
\item the Floer complex $\CFm$ with respect to $M_+$, and 
\item the intermediate complex $\Ccint$ with respect to $M_-$, and
\item the complex $\Chain^- \subset \Ccint$ with respect to $L-M$.
\end{itemize}
Precisely, we first consider $\Ccint(r_{M_+}(G))$, the complex similar to that denoted $\Ccint(G)$ in Section~\ref{subsec:ags}, but using the diagram $r_{M_+}(G)$. Thus, compared to $\Ccint(G)$, in the definition of $\Ccint(r_{M_+}(G))$ we do not introduce variables $V_{i,j}$ and $Y_{i,j}$ for $i \in I_+(\orL, \orM)$; instead, for such $i$, we use  $O_{i,j}(r)$ as exponents for $U_{i,j}$ in the formula for the differential. The complex $\Ccint(r_{M_+}(G))$ has filtrations $\FF_i$ for $i \not \in I_+(\orL, \orM)$, where we use the convention that the Alexander filtration levels $A_i$ of $\x \in \Ta \cap \Tb$ are the ones from the diagram $G$ (rather than those from $r_{M_+}(G)$, where $\s$ becomes $\psi^{M_+}(\s)$).

We let $\Chain^-(G,\orM, p^{\orM}(\s))$ be the subcomplex of $\Ccint(r_{M_+}(G))$ in filtration degrees
$$ \FF_i \leq s_i \text{ for } L_i \not\subseteq M.$$

We now define a {\em projection-inclusion map}
\begin {equation}
\label {eq:Proj}
 \I^{\orM}_\s : \Chain^-(G, \s) \to \Chain^- (G, \orM, p^{\orM}(\s)).
\end {equation}
This will be the first ingredient in the construction of the maps $\Phi^{\orM}_\s$ advertised in Section~\ref{sec:summary}. The map  $\I^{\orM}_{\s}$ is constructed as the composition of a projection similar to $\tilde P$ from \eqref{eq:tildeP}, but taken only with respect to the indices $i \in I_+(\orL, \orM)$, and the natural inclusion into $\Chain^-(G,\orM, p^{\orM}(\s))$. Specifically, for $\x \in \S$, we set
\begin{multline}
\label{eq:Ims}
\I^{\orM}_{\s}\Bigl(\prod_{i\in I_+(\orL, \orM)} \prod_j V_{i,j}^{n_{i,j}}  Y_{i,j}^{a_{i,j}} \x \Bigr) = \\
\begin{cases}
\prod_{i \in I_+(\orL, \orM)} U_{i,1}^{A_i(\x) - s_i - \sum_{j=2}^{p_i} n_{i,j}} U_{i,2}^{n_{i,2}} \dots U_{i,p_i}^{n_{i,p_i}} \x  & \text{if all } a_{i,j} = 0 \text{ for } i \in I_+(\orL, \orM), \\
 0 & \text{otherwise.}\end{cases}
 \end{multline}
We then extend $\I^{\orM}_{\s}$ to be equivariant with respect to the action of the variables $V_{i,j}$ and $Y_{i,j}$ for $i \not \in  I_+(\orL, \orM)$, as well as to that of all the $U$ variables.

\subsection {Handleslides over a set of markings}
\label {sec:DestabSeveral}
Consider a subset $\zed= \{Z_1, \dots, Z_k \} \subseteq \Xs \cup \Os$ consisting only of linked markings. We say that $\zed$ is {\em consistent} if, for any $i$, at most one of the sets $\zed \cap \Os_i$ and $\zed \cap \Xs_i$ is nonempty. From now on we shall assume that $\zed$ is consistent.

We let $L(\zed) \subseteq L$ be the sublink consisting of those components $L_i$ such that at least one of the markings on $L_i$ is in $\zed$. We orient $L(\zed)$ as $\orL(\zed)$, such that a component $L_i$ is given the orientation coming from $\orL$ when $\zed \cap \Os_i \neq \emptyset$, and is given the opposite orientation when  $\zed \cap \Xs_i \neq \emptyset$.

Let us define a new set of curves $\betas^\zed = \{\beta_j^{\zed}|  j=1, \dots, n\}$ on
the torus $\TT$. Let $j_i$ be the index corresponding to the vertical
circle $\beta_{j_i}$ just to the left of a marking $Z_i \in \zed$. We let
$\beta_{j_i}^{\zed}$ be a circle encircling $Z_i$ and intersecting
$\beta_{j_i}$, as well as the $\alpha$ curve just below $Z_i$, in two
points each; in other words, $\beta_{j_i}^{\zed}$ is obtained from
$\beta_j$ by handlesliding it over the vertical curve just to the
right of $Z_i$. For those $j$ that are not $j_i$ for any $Z_i \in \zed$, we 
let $\beta_j^{\zed}$ be a curve isotopic to $\beta_j$ and intersecting it in two points.

\begin {remark}
Our assumption on the existence of a free marking ensures that $\betas^\zed$ contains at least one vertical beta curve.  
\end {remark}

For any consistent collection $\zed$, we denote 
$$\mathbb{T}_{\beta}^{\zed} = \beta_1^\zed \times \dots \times \beta_n^\zed \subset \Sym^n(\TT).$$

Observe that
\begin{equation}
\label{eq:HGZ}
 \Hyper_G(\zed):=(\TT, \alphas, \betas^\zed, \Os^{\orL(\zed)}, \Xs^{\orL(\zed)})
 \end{equation}
is a multi-pointed Heegaard diagram representing the link $\orL - L(\zed)$. More generally, let $\orM$ be any sublink of $L$ containing $\orL(\zed)$, such that the restriction to $L(\zed)$ of the orientation on $\orM$ coincides with $\orL(\zed)$. We can then consider a diagram\footnote{The notation $\Hyper^{\orM}_G(\zed)$ is specific to this paper. In \cite[Section 15.3]{LinkSurg}, this diagram was part of a ``hyperbox of Heegaard diagrams,'' denoted $\Hyper^{\orL,\orM}_G$.}
$$\Hyper^{\orM}_G(\zed) := (\TT, \alphas, \betas^\zed, \Os^{\orM}, \Xs^{\orM}),$$
representing the link $\orL-M$. (In particular, note that $\Hyper^{\orM}_G(\emptyset))$ is the reduction $r_{\orM}(G)$.) Recall from Equation~\eqref{eq:newXO} that $\Os^{\orM}$ contains some previous $X$ markings that are now re-labelled as $O$'s.

We can define a completed Floer complex $\CFm(\Hyper^{\orM}_G(\zed))$, just as we defined $C^-(G)$ in Section~\ref{subsec:ags}. More precisely, $\CFm(\Hyper^{\orM}_G(\zed))$ is generated
over $\Ring$ by $\Ta\cap\Tb^{\zed}$, with differential given by
\begin{multline}
 \partial \x = \sum_{\y \in \Ta\cap\Tb^{\zed}} \sum_{\{\phi\in \pi_2(\x, \y) | \Mas(\phi)=1\}} \#(\M(\phi)/\R)\cdot \prod_{i \not \in I_-(\orL, \orM)} U_{i,1}^{O_{i,1}(\phi)} \dots U_{i, p_i}^{O_{i, p_i}(\phi)} \\
 \cdot \prod_{i  \in I_-(\orL, \orM)} U_{i,1}^{X_{i,1}(\phi)} \dots U_{i, p_i}^{X_{i, p_i}(\phi)} \cdot \prod_{j=n-q+1}^n U_j^{O_j(\phi)}\cdot  \y.
 \end{multline}
Here $\pi_2(\x, \y)$ is the set of homology classes of Whitney disks from $\x$ to $\y$, and $\M(\phi)$ is the moduli space of pseudo-holomorphic disks in the class $\phi$, of Maslov index $\mu(\phi)=1$; compare \cite{HolDisk}. The functions $O_{i,j}, X_{i,j}, O_j$ are as in Section~\ref{subsec:ags}, except we apply them to the domain of $\phi$ (a two-chain on the torus), rather than to a rectangle.

For each $\s \in \bH(L)$, we can also define a generalized Floer chain complex 
$$\Chain^-(\Hyper^{\orM}_G(\zed), \psi^{\orM}(\s)).$$ 
This is constructed just as $\Chain^-(G, \s)$ from Section~\ref{subsec:ags}, but using pseudo-holomorphic disks in $\Sym^n(\TT)$ relative to $\Ta$ and $\mathbb{T}_{\beta}^{\zed}$, instead of rectangles. Further, the basepoints in $\Os^{\orM}$ and $\Xs^{\orM}$ play the roles of those in $\Os$ resp. $\Xs$. 

Explicitly, we first consider the dga generated (in the sense of infinite sums with boundedness condition \eqref{eq:bddness}) over the base ring $\Ring$ by variables 
$$V_{i,j}, \ Y_{i,j}, \text{ for } i \not\in I(\orL, \orM), \ 2 \leq j \leq p_i$$
with relations
$$ Y_{i,j}^2=0, \ \ \del Y_{i,j} = U_{i,j} + U_{i,1}V_{i,j}.$$

We define a complex $\Ccint(\Hyper^{\orM}_G(\zed))$, as the dg module over the dga above, generated by $\x \in \Ta\cap\Tb^{\zed}$, with differential 
\begin{multline}
\label{eq:ley}
\del \x=
\sum_{\y \in \Ta\cap\Tb^{\zed}} \sum_{\{\phi\in \pi_2(\x, \y) | \Mas(\phi)=1\}} \#(\M(\phi)/\R)\cdot \prod_{i \not \in I_(\orL, \orM)} U_{i,1}^{X_{i,1}(\phi) + \ldots + X_{i,p_i}(\phi)} V_{i,2}^{O_{i,2}(\phi)} \dots  V_{i,p_i}^{O_{i,p_i}(\phi)}\\ \cdot
 \prod_{i  \in I_+(\orL, \orM)}
U_{i,1}^{O_{i,1}(\phi)} \dots U_{i, p_i}^{O_{i, p_i}(\phi)} 
 \cdot \prod_{i  \in I_-(\orL, \orM)} U_{i,1}^{X_{i,1}(\phi)} \dots U_{i, p_i}^{X_{i, p_i}(\phi)} \cdot \prod_{j=n-q+1}^n U_j^{O_j(\phi)}\cdot  \y.
\end{multline}

This complex admits filtrations $\FF_i$, for $i \not \in I(\orL, \orM)$, such that
$$ \FF_i(\x) = -A_i(\x), \ \FF_i(U_{i,1})=-1, \ \FF(V_{i,j})=1, \ j \geq 2,$$
and $\FF_i$ takes all the other $U$, $V$ and $Y$ variables to zero. We then define $\Chain^-(\Hyper^{\orM}_G(\zed), \psi^{\orM}(\s))$ as the subcomplex of $ \Ccint(\Hyper^{\orM}_G(\zed))$ in filtration levels $\FF_i \leq \psi_i^{\orM}(\s).$

\medskip
When we have two collections of markings $\zed, \zed'$ such that $\zed \cup \zed'$ is consistent, we will require that $\beta_j^\zed$ and $\beta_j^{\zed'}$ intersect in exactly two points. Hence, there is always a unique maximal degree intersection point $$\Theta^\can_{\zed, \zed'} \in \mathbb{T}_{\beta}^{\zed} \cap \mathbb{T}_{\beta}^{\zed'}.$$ See Figure~\ref{fig:betaZ}. (The homological grading on generators is computed by viewing the markings in both $\zed$ and $\zed'$ as $O$-markings.) 

\begin{figure}
\begin{center}
\input{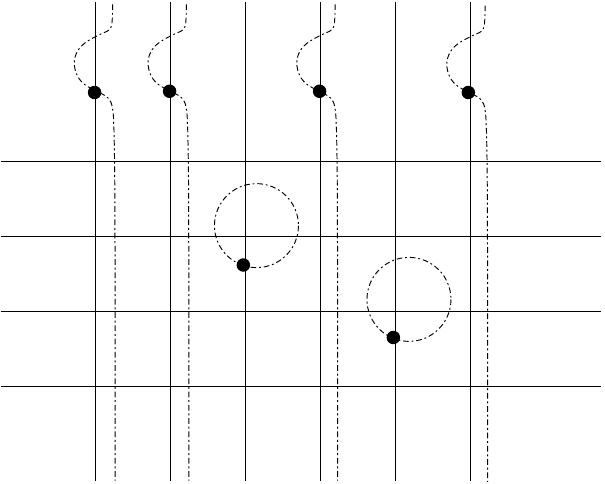_t}
\end{center}
\caption {{\bf A new collection of curves.} We show here a part of a grid diagram, with the horizontal segments lying on  curves in $\alphas$ and the straight vertical segments lying on curves in $\betas$. The interrupted curves (including the two circles) represent curves in $\betas^\zed$, where $\zed$ consists of the two markings $Z_1$ and $Z_2$. The maximal degree intersection point $\Theta^\can_{\emptyset, \zed}$ is represented by the black dots.}
\label{fig:betaZ}
\end{figure}

\subsection {Polygon maps}
\label {sec:PolygonMaps}
Fix a consistent collection of linked markings $\zed= \{Z_1, \dots, Z_m \}$, and a sublink $\orM$ containing $\orL(\zed)$, as in the previous subsection. We define an $m$-dimensional hypercube of chain complexes 
$$H^\zed = \bigl( C^{\zed, \eps}, \De^{\zed, \eps} \bigr)_{\eps \in \E_m}$$ as follows. For $\eps \in \E_m = \{0,1\}^m$, we let 
$$\zed^\eps = \{Z_i \in \zed \mid \eps_i = 1\}$$
and set
$$ C^{\zed, \eps}_{*} = \CFm(\Hyper^{\orM}_G(\zed^\eps)).$$ 

For simplicity, we denote $\Theta^{\can}_{\zed^\eps, \zed^{\eps'}}$ by $\Theta^\zed_{\eps, \eps'}$. We write $\eps \precdot \eps'$ whenever $\eps, \eps' \in \E_m$ are immediate successors, i.e., $\eps < \eps'$ and $\|\eps' - \eps\|=1$. For a string of immediate successors $\eps= \eps^0 \precdot \eps^1 \precdot \cdots \precdot \eps^k = \eps'$, we let
 \begin {equation}
 \label {eq:order}
 \De^{\zed, \eps^0 \precdot \eps^1 \precdot \cdots \precdot \eps^k } : C^{\zed, \eps}_* \to C^{\zed, \eps'}_{* + k-1},
 \end {equation}
\begin {multline*}
  \De^{\zed, \eps^0 \precdot \eps^1 \precdot \cdots \precdot \eps^k } (\x) = 
   \sum_{\y \in \Ta \cap \Tb^{\zed^{\eps^k}}} \sum_{\{\phi \in \pi_2(\x, \Theta^{\zed}_{\eps^0, \eps^1}, \dots,  \Theta^{\zed}_{\eps^{k-1}, \eps^k}, \y)| \mu(\phi) =1-k\}} \bigl(\# \M(\phi)\bigr) \\
   \cdot \prod_{i \not \in I_-(\orL, \orM)} U_{i,1}^{O_{i,1}(\phi)} \dots U_{i, p_i}^{O_{i, p_i}(\phi)} 
 \cdot \prod_{i  \in I_-(\orL, \orM)} U_{i,1}^{X_{i,1}(\phi)} \dots U_{i, p_i}^{X_{i, p_i}(\phi)} \cdot \prod_{j=n-q+1}^n U_j^{O_j(\phi)} \cdot \y
\end {multline*}
be the map defined by counting isolated pseudo-holomorphic polygons in the symmetric product $\Sym^n(\TT)$. Here, 
$$\pi_2(\x, \Theta^{\zed}_{\eps^0, \eps^1}, \dots,  \Theta^{\zed}_{\eps^{k-1}, \eps^k}, \y)$$ denotes the set of  homotopy classes of polygons with edges on $\Ta, \Tb^{\zed^{\eps^0}}, \dots, \Tb^{\zed^{\eps^k}}$, in this cyclic order, and with the specified vertices. The number $\mu(\phi) \in \zz$ is the Maslov index, and $\M(\phi)$ is the moduli space of holomorphic polygons in the class
$\phi$. The Maslov index has to be $1-k$ for the expected dimension of
the moduli space of polygons to be zero. This is because the moduli space of conformal structures on 
a disk with $k+2$ marked points along its boundary has dimension
$(k+2)-3=k-1$.
Note that this definition of $\mu$ is different from the one in
\cite[Section 4.2]{BrDCov}, where it denoted expected dimension.

In the special case $k=0$, we need to divide $\M(\phi)$ by the action of $\rr$ by translations; the resulting $\De^{\zed, \eps^0}$ is the usual differential $\del$. 

We refer to \cite[Section 8]{HolDisk}, \cite[Section 4.2]{BrDCov} and \cite[Section 3.5]{LinkSurg} for further details about polygon counts in Heegaard Floer theory. See also \cite[Lemma 15.2]{LinkSurg} for the proof that the Heegaard multi-diagrams that we consider in our case are admissible (so that the polygon counts are finite).

Define
\begin {equation}
\label {eq:dezede}
 \De^{\zed, \eps} : C^{\zed, \eps^0}_* \to C^{\zed, \eps^0+\eps}_{* + k-1}, \ \ \  \De^{\zed, \eps} = \sum_{\eps^0 \precdot \eps^1 \precdot \cdots \precdot \eps^k = \eps^0 + \eps}
\De^{\zed, \eps^0 \precdot \eps^1 \precdot \cdots \precdot \eps^k } .
\end {equation}

The following is a particular case of \cite[Lemma 8.12]{LinkSurg}; compare also \cite[Lemma 9.7]{HolDisk} and \cite[Lemma 4.3]{BrDCov}:

\begin {lemma}
\label {lemma:d2}
For any consistent collection $\zed= \{Z_1, \dots, Z_m \}$, the resulting $H^\zed= (C^{\zed, \eps}, \De^{\zed, \eps})_{\eps \in \E_m}$ is a hypercube of chain complexes.
\end {lemma}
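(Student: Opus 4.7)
The strategy is the standard Floer-theoretic degeneration argument: the hypercube identity
$$\sum_{\eps' \leq \eps} \De^{\zed, \eps-\eps'}_\s \circ \De^{\zed, \eps'}_\s = 0$$
for fixed $\eps \in \E_m$ is obtained by counting, modulo $2$, the ends of the compactified one-dimensional moduli spaces of pseudo-holomorphic polygons of expected dimension $1$, i.e., of Maslov index $\Mas(\phi) = 2-k$ with $k = \|\eps\|$. Expanding $\De^{\zed, \eps'}_\s$ and $\De^{\zed, \eps-\eps'}_\s$ via formula~\eqref{eq:dezede}, each term in the double sum counts configurations of two pseudo-holomorphic polygons glued along a shared corner (either a generator in $\Ta \cap \Tb^{\zed^{\eps'}}$ or one of the $\Theta$-cycles $\Theta^\zed_{\eps^{i-1}, \eps^i}$); such configurations appear precisely as ends of $\overline{\M(\phi)}$ for appropriate homotopy classes $\phi$.

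The ends of $\overline{\M(\phi)}$ fall into four types: (i) strip-breaking at the input corner $\x$ or the output corner $\y$, contributing $\del \circ \De^{\zed,\cdots}_\s$ or $\De^{\zed,\cdots}_\s \circ \del$ terms; (ii) polygon-splitting at an interior $\Theta$-corner $\Theta^\zed_{\eps^{i-1}, \eps^i}$ together with a newly appearing $\Ta$-vertex, producing compositions of the form $\De^{\zed, \eps^i \precdot \cdots \precdot \eps^k}_\s \circ \De^{\zed, \eps^0 \precdot \cdots \precdot \eps^i}_\s$; (iii) the collision of two consecutive $\Theta$-corners, which degenerates off a small triangle on the beta-side with vertices $\Theta^\zed_{\eps^{i-1}, \eps^i}$, $\Theta^\zed_{\eps^i, \eps^{i+1}}$, and a third vertex in $\Tb^{\zed^{\eps^{i-1}}} \cap \Tb^{\zed^{\eps^{i+1}}}$; and (iv) boundary (sphere or disk) bubbling inside $\Sym^n(\TT)$. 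Ends of types (i) and (ii) pair exactly with the various terms on the left-hand side of the identity after reorganizing the double sum over chains of immediate successors; type (iv) is excluded because every bubble class in $\Sym^n(\TT)$ contains a basepoint, so either its contribution is killed by the $U$-weight or it fails to appear in isolation at the relevant index, by the standard admissibility and index arguments of \cite{HolDisk}.

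The key auxiliary input needed is the composition law for the top-degree $\Theta$-cycles: for any $\eps \leq \eps' \leq \eps''$ in $\E_m$, the holomorphic triangle count on the beta-only triple $(\Tb^{\zed^\eps}, \Tb^{\zed^{\eps'}}, \Tb^{\zed^{\eps''}})$ sends the pair $(\Theta^\zed_{\eps, \eps'}, \Theta^\zed_{\eps', \eps''})$ to $\Theta^\zed_{\eps, \eps''}$. With this identity in hand, each type (iii) end reinterprets as a broken configuration of type (ii) in which one of the interior $\Theta$-corners has been ``absorbed'' into its neighbor, and the total count again cancels. The Theta-composition identity is proved by a local model computation on the small bigons and triangles encircling each marking $Z_i \in \zed$ (where the three beta-collections differ only by small isotopies or handleslides), combined with an area/energy estimate to exclude domains wrapping non-trivially around the torus; the hypothesis $\s \in J(\zed)$ guarantees that the $U$-weights near these markings vanish, so the identity holds on the nose. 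This Theta-composition step is the main obstacle, and the remainder of the proof is the usual bookkeeping inherited from the analogues in \cite[Lemma~9.7]{HolDisk} and \cite[Lemma~4.3]{BrDCov}.
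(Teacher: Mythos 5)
First, a point of comparison: the paper does not prove Lemma~\ref{lemma:d2} at all; it quotes it as a special case of \cite[Lemma 6.12]{LinkSurg} (compare \cite[Lemma 9.7]{HolDisk} and \cite[Lemma 4.3]{BrDCov}), so your proposal is being measured against the standard degeneration argument in those references rather than against a proof written in this paper. Your skeleton --- counting ends of the compactified one-dimensional moduli spaces of $(k+2)$-gons of Maslov index $2-k$, and matching strip-breaking and splittings at $\Ta$-vertices with the terms of \eqref{eq:d2} --- is indeed that argument, and the index bookkeeping is right.

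However, there is a genuine gap in your treatment of the degenerations on the beta side, which is exactly the delicate part. A type (iii) end, in which a beta-only triangle with corners $\Theta^\zed_{\eps^{i-1},\eps^i}$, $\Theta^\zed_{\eps^i,\eps^{i+1}}$ and a point $q \in \Tb^{\zed^{\eps^{i-1}}} \cap \Tb^{\zed^{\eps^{i+1}}}$ splits off, does not ``reinterpret as a broken configuration of type (ii)'': the leftover $(k+1)$-gon has a vertex chain in which one step jumps by two coordinates, so it is not counted by any map $\De^{\zed,\eps'}_\s$ and corresponds to no term of \eqref{eq:d2}. The correct mechanism is that this broken configuration occurs as an end of two different one-dimensional moduli spaces, one for each of the two chains of immediate successors interpolating between $\eps^{i-1}$ and $\eps^{i+1}$ (equivalently, the two orderings of the two markings being collapsed), and the two occurrences cancel mod $2$ provided the triangle counts with the two $\Theta$-inputs agree for both intermediate vertices --- e.g.\ are concentrated on the top class $\Theta^\zed_{\eps^{i-1},\eps^{i+1}}$ with coefficient one. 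This is where your $\Theta$-composition law enters, but as a pairing between distinct moduli spaces, not as an absorption into type (ii). In addition, you omit the remaining beta-only codimension-one ends: bigon breaking at a single $\Theta$-corner (handled because each $\Theta$ is a cycle in the beta-beta complex, which should be stated and used explicitly), and, more seriously, the splitting off of a beta-only $(j+1)$-gon containing $j \geq 3$ consecutive $\Theta$-corners, which is also a codimension-one stratum (the beta-only piece is rigid at index $2-j$ and the remainder at index $j-k$). These higher beta-degenerations must be shown to cancel or to have vanishing counts --- in the cited references this requires a separate argument exploiting the standard local form of the purely beta diagrams --- and without it the count of ends does not close up to yield \eqref{eq:d2}.
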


\begin {remark}
\label {rem:note}
For future reference, it is helpful to introduce a different notation for the maps  \eqref{eq:order} and \eqref{eq:dezede}. First, let us look at \eqref{eq:order} in the case $k=m, \ \eps^0 = (0, \dots, 0)$ and $\eps^m = (1, \dots, 1)$. A string of immediate successors $\eps^0 \prec \cdots \prec \eps^m$ is the same as a  re-ordering $(Z_{\sigma(1)}, \dots, Z_{\sigma(m)})$ of $\zed=\{Z_1, \dots, Z_m\}$, according to the permutation $\sigma$ in the symmetric group $S_m$ such that 
$$ \zed^{\eps^i} = \zed^{\eps^{i-1}} \cup \{Z_{\sigma(i)}\}.$$
We then write:
$$ \De^{(Z_{\sigma(1)}, \dots, Z_{\sigma(m)})} = \De^{\zed, \eps^0 \precdot \eps^1 \precdot \cdots \precdot \eps^k }.$$
 In particular, we let $\De^{(\zed)} = \De^{(Z_1,\dots, Z_m)}$ be the
 map corresponding to the identity permutation. Further, we write $\De^\zed$ for the longest map $\De^{\zed, (1, \dots, 1)}$ in
 the hypercube $H^\zed$, that is,
\begin {equation}
\label {eq:dezeds}
 \De^\zed = \sum_{\sigma \in S_m} \De^{(Z_{\sigma(1)}, \dots, Z_{\sigma(m)})}.
 \end {equation}
Observe that $\De^\zed$, unlike $\De^{(\zed)}$, is independent of the ordering of $\zed$. Observe also that an arbitrary map $\De^{\zed, \eps}$ from $H^\zed$ is the same as the longest map $\De^{\zed^\eps}$ in a sub-hypercube of $H^\zed$. In this notation, the result of Lemma~\ref{lemma:d2} can be written as:
$$ \sum_{\zed' \subseteq \zed} \De^{\zed \setminus \zed'} \circ \De^{\zed'} = 0.$$

See Figure~\ref{fig:destabs} for a picture of the hypercube corresponding to handleslides at a set $\zed = \{Z_1, Z_2\}$ of two linked markings.

\begin{figure}
\begin{center}
\input{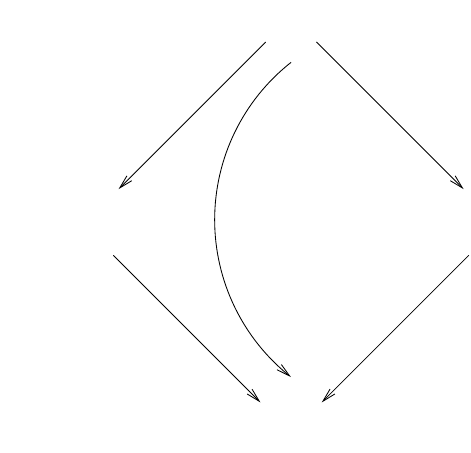_t}
\end{center}
\caption {{\bf Polygon maps for two linked markings.} The straight lines represent chain maps corresponding to handleslides over one marking, and the curved map is a chain homotopy between the two compositions. Each chain map $\De^{(Z_i)}$ could also have been written as $\De^{\{Z_i\}}$.}
\label{fig:destabs}
\end{figure}

\end {remark}

The constructions of polygon maps generalize to other types of complexes. For example, instead of $\CFm(\Hyper^{\orM}_G(\zed^\eps))$ we could consider complexes of the form $\Chain^-(\Hyper^{\orM}_G(\zed), \psi^{\orM}(\s))$. Then, the polygon maps are defined by formulas similar to the ones above, except that in \eqref{eq:order}, the coefficients coming from indices $i \not \in I(\orL, \orM)$ should be
$$U_{i,1}^{X_{i,1}(\phi) + \ldots + X_{i,p_i}(\phi)} V_{i,2}^{O_{i,2}(\phi)} \dots  V_{i,p_i}^{O_{i,p_i}(\phi)}.$$
Compare the formula \eqref{eq:ley} for the differential on $\Chain^-(\Hyper^{\orM}_G(\zed), \psi^{\orM}(\s))$. 

\subsection {Descent maps, Part 1: Overview}
\label{sec:descent1}
 Let $M \subseteq \orL$ be a sublink, endowed with an arbitrary orientation $\orM$, as in Section~\ref{sec:subred}. Set
$$\zed(\orM) = \bigcup_{i \in I_+(\orL, \orM)} \Os_i \cup \bigcup_{i \in I_-(\orL, \orM)} \Xs_i. $$ 
Note that $\orL(\zed(\orM)) = \orM$. We will write $\Hyper_G^{\orM}$ for the diagram $\Hyper_G^{\orM}(\zed(\orM))$, obtained by handlesliding over all the markings in $\Os^{\orM}$.

In this subsection we will use holomorphic polygon counts to construct maps  \begin {equation}
\label {eq:descent}
\hat \De^{\orM}_{p^{\orM}(\s)} : \Chain^-(G, \orM, p^{\orM}(\s)) \to \Chain^-(\Hyper_G^{\orM}, \psi^{\orM}(\s)),
\end {equation}
for $\s \in \bH(L)$. These are called {\em descent maps}, and are the second ingredient in the definition of the maps $\Phi^{\orM}_{\s}$, as advertised in Section~\ref{sec:summary}.

Let $m$ be the number of components of $M$. In the construction of the descent maps we will use some $m$-dimensional hyperboxes of chain complexes. In Section~\ref{sec:hyper}, the sides of  such a hyperbox were labeled by $i=1, \dots, m$, so that each complex had an index $\eps= (\eps_1, \dots, \eps_m)$. In this section, for notational convenience, we will label the sides of hyperboxes by the indices $i \in I(\orL, \orM)$ instead, so that the index of a complex is $\eps = (\eps_i)_{i \in I(\orL, \orM)}$. Of course, one can go back and forth between an ordinary hyperbox from one labeled by $I(\orL, \orM)$, by using the order-preserving bijection from $I(\orL, \orM)$ to $\{1, \dots, m\}$. Hence, we can easily transfer notions from Section~\ref{sec:hyper} (such as compression)  into similar notions for the hyperboxes labeled by $I(\orL, \orM)$.

For $i\in I(\orL, \orM)$, equip $L_i$ with the orientation $\orL_i$ induced from $\orM$. Then $\zed(\orL_i)$ is either $\Os_{i}$ or $\Xs_{i}$. In either case, we have an ordering of its elements, so we can write 
$$ \zed(\orM_j) = \{Z_{i,1}, \dots, Z_{i, p_i} \}.$$
(Recall from Section~\ref{subsec:ags} that, in general, $p_i$ denotes the number of basepoints of one type on the component $L_i$.)

For each multi-index $\eps = (\eps_i)_{i \in I(\orL, \orM)}$ with $0 \leq \eps_i \leq p_i$, we let $\zed(\orM)^\eps \subseteq \zed(\orM)$ be the collection of markings 
$$ \zed(\orM)^{\eps} = \bigcup_{i \in I(\orL, \orM)} \{Z_{i,1}, \dots, Z_{i, \eps_i} \}.  $$

We then let 
$$\betas^{\eps} = \betas^{\zed(\orM)^\eps}$$
be the collection of beta curves handleslid at the points of $\zed(\orM)^\eps$. We will also write $\T_{\beta}^{\eps} \subset \Sym^n(\T)$ for the product of the curves in the collection $\betas^{\eps}$.

For each $\eps$, consider the Heegaard diagram $$\Hyper^{\orM}_{G, \eps} := \Hyper_G^{\orM}(\zed(\orM)^{\eps}) = (\TT, \alphas, \betas^\eps, \Os^{\orM}, \Xs^{\orM}).$$ This diagram represents the link $\orL - M$. In particular, for $\eps=\zero=(0,\dots, 0)$ the diagram $\Hyper^{\orM}_{G, \zero}$ is the reduction $r_{\orM}(G)$. At the other extreme, for $\eps =(p_i)_{i \in I(\orL, \orM)}$, we obtain the diagram $\Hyper_G^{\orM}$.

For $\eps < \eps'$ with $\|\eps' - \eps\| =1$, we also denote
$$ \Theta_{\eps, \eps'} =\Theta^\can_{\zed(\orM)^{\eps}, \zed(\orM)^{\eps'}}.$$

Using this information, we can construct a preliminary hyperbox $H_{\prelim}$ of generalized Floer complexes, of size $\dd= (p_i)_{i \in I(\orL, \orM)}$, such that at position $\eps$ we have the complex
$$ \Chain^-(\Hyper^{\orM}_{G, \eps}, \psi^{\orM}(\s))$$
and along the faces we have maps given by counting polygons accordingly (as noted at the end of Section~\ref{sec:PolygonMaps}). After compressing this hyperbox, the longest map in the resulting hypercube would be of the form 
$$ \Chain^-(r_{\orM}(G),  \psi^{\orM}(\s)) \to \Chain^-(\Hyper^{\orM}_{G}, \psi^{\orM}(\s)).$$
This has the same target as the desired descent map \eqref{eq:descent}, but the domain is different. 
In fact, the two domains $\Chain^-(G, \orM, p^{\orM}(\s))$ and $\Chain^-(r_{\orM}(G),  \psi^{\orM}(\s))$ are constructed in rather similar ways; they both look like $\Chain^-$ with respect to $L-M$ and like $\CFm$ with respect to $M_+$ (that is, their differential keeps track of the basepoints on $L-M$ and $M_+$ in the same manner). The only difference is with regard to the components $L_i \subseteq M_-$. Indeed, in $\Chain^-(G, \orM, p^{\orM}(\s))$ terms in the differential contain factors of the form
$$U_{i,1}^{X_{i,1}(r) + \ldots + X_{i,p_i}(r)} V_{i,2}^{O_{i,2}(r)} \dots  V_{i,p_i}^{O_{i,p_i}(r)}, $$
whereas for $\Chain^-(r_{\orM}(G),  \psi^{\orM}(\s))$ we have factors
$$ U_{i,1}^{X_{i,1}(r)} U_{i,2}^{X_{i,2}(r)} \dots U_{i,p_i}^{X_{i,p_i}(r)}.$$

To obtain the desired descent map $\hat \De^{\orM}_{p^{\orM}(\s)}$ starting from $\Chain^-(G, \orM, p^{\orM}(\s))$, we need to relate the two ways of keeping track of the basepoints on $M_-$. Specifically, $\hat \De^{\orM}_{p^{\orM}(\s)}$ will be the longest map in the compression of a hyperbox $H$, of the same dimension $m$ as $H_{\prelim}$ but of larger size. The hyperbox $H$ is obtained from $H_{\prelim}$ by adding certain complexes and ``transition'' maps at the beginning of the sides corresponding to $M_-$. 

In Sections~\ref{sec:descent2}-\ref{sec:descent3} below we give the detailed definition of the hyperbox $H$, by adapting the construction in \cite[Section 13.6]{LinkSurg} to the case of grids. The hyperbox $H$ will be of size $\dd':=(d'_i)_{i \in I(\orL, \orM)}$, where
\begin{equation}
\label{eq:di}
 d'_i = \begin{cases}
p_i &\text{if } i\in I_+(\orL, \orM),\\
2p_i - 1 &\text{if } i\in I_-(\orL, \orM)
\end{cases}.
\end{equation}

\begin{remark}
In the formula for the size of the hyperbox in \cite[Equation (234)]{LinkSurg}, we had $d_i$ for $i\in I_+(\orL, \orM)$ and $d_i + p_i-1$ for $i\in I_-(\orL, \orM)$, where $d_i$ are the side lengths of the preliminary hyperbox $H_{\prelim}$. For grids, we have $d_i = p_i$, because we use exactly $p_i$ moves (handleslides) to go from a grid diagram $G$ to $\Hyper^{\orL_i}_G$.  
\end{remark}

We warn the reader that the algebra involved in the construction of the hyperbox $H$ is rather complicated. We refer to \cite[Section 13]{LinkSurg} for a friendlier treatment, with the motivation built up from simpler particular cases. For example, the case of a knot with four basepoints is discussed in \cite[Section 13.1]{LinkSurg}, and already contains many of the features of the general setting. Also, a good picture of $H$ to keep in mind is \cite[Figure 38]{LinkSurg}, where the preliminary hyperbox $H_{\prelim}$ is the rectangle bounded by dashed lines.

\subsection{Descent maps, Part 2: Complexes}
\label{sec:descent2}
In this section we write down the complexes assigned to vertices of the hyperbox $H$. In the next section, we will describe the differentials. 

At a vertex $\eps=(\eps_i)_{i\in I(\orL, \orM)}$ in $H$, we place a complex $C^{\eps}$ defined as follows. Let
$$ I(\eps) = \{ i \in I_-(\orL, \orM) \mid \eps_i < p_i - 1 \}$$
and
$$ M_{\eps}= \bigcup_{i \in I(\eps)} L_i \subseteq M_-.$$

We split the information in $\eps$ into two parts, $\eps^<$ and $\eps^>$, corresponding to $i \in I(\eps)$ and $i \not \in I(\eps)$, respectively. Specifically, $\eps^<$ is a vector whose entries are indexed by $i \in I(\eps)$, such that
$$ \eps^<_i = \eps_i.$$

The second vector, $\eps^>$, has entries indexed by $i \in I(\orL, \orM) - I(\eps)$, such that
$$ \eps^>_i = \begin{cases}\eps_i & \text{if } i \in I_+(\orL, \orM), \\
\eps_i - (p_i -1) & \text{if } i \in I_-(\orL, \orM) - I(\eps).
\end{cases}$$
In the second case, we subtract $p_i-1$ so that we are in agreement with the indexing of the entries in the hyperbox $H_{\prelim}$. Indeed, note that the values of $\eps^>_i$ vary between $0$ and $d_i$. In fact, the sub-hyperbox of $H$ corresponding to indices $\eps$ such that $I(\eps)=\emptyset$  will be exactly $H_{\prelim}$.

For arbitrary $\eps$, equip the sublink $\orM-M_{\eps}$ with the orientation induced from $\orM$. The complex at position $\eps$ in the hyperbox $H$ is associated to the Heegaard diagram 
$$\Hyper^{\orM-M_{\eps}}_{G, \eps^>}= (\TT, \alphas, \betas^{\eps^>}, \Os^{\orM-M_{\eps}}, \Xs^{\orM-M_{\eps}}).$$ We denote this complex by
\begin{equation}
\label{eq:CCmix}
 C^{\eps} = \Cc_{\eps^<}(\Hyper^{\orM-M_{\eps}}_{G, \eps^>}, M_{\eps}, \psi^{\orM-M_{\eps}}(\s)).
 \end{equation}
This is a generalized Floer complex that looks like
\begin{itemize}
\item the resolution $\Chain^-$ in the directions $i \not \in I(\orL, \orM)$,
\item a Floer complex $\CFm$ in the directions $i \in I(\orL, \orM) - I(\eps)$, using the markings in $\Os^{\orM}$, 
\item a complex interpolating between $\Ccint$ and $\CFm$, in a way depending on $\eps_i$, in the directions $i \in I(\eps)$.
\end{itemize}
We will construct $C^{\eps}$ as a subcomplex of a complex denoted
$$ \Cc_{\eps^<}(\Hyper^{\orM-M_{\eps}}_{G, \eps^>}, M_{\eps}),$$
which is similar to $C^{\eps}$, except it looks like $\Ccint$ in the directions $i\not \in I(\orL, \orM)$.

Let us make these definitions fully rigorous. First, recall that we have a base ring
$$ \Ring = \Field[[(U_{i,j})_{\substack{1\leq i \leq \ell, \\ 1 \leq j \leq p_i}} (U_{j})_{n-q+1\leq j \leq n}]].$$
We introduce the dga
$$ \Ringbig^Y_{\eps^<}(\orL, \orM, M_{\eps})$$
generated over $\Ring$ (in the sense of infinite sums with boundedness condition) by new variables $V_{i,j}, Y_{i,j}$, for indices $i, j$ such that
\begin{equation}
\label{eq:condo}
\bigl(i \not\in I(\orL, \orM), \ 2 \leq j \leq p_i \bigr ) \ \text{ or } \ \bigl(i \in I(\eps), \eps_i +2 \leq j \leq p_i \bigr)
\end{equation}
with relations
$$ Y_{i,j}^2=0, \ \ \del Y_{i,j} = U_{i,j} + U_{i,1}V_{i,j}.$$
We let $ \Cc_{\eps^<}(\Hyper^{\orM-M_{\eps}}_{G,\eps^>}, M_{\eps})$ be the complex  generated over $\Ringbig^Y_{\eps^<}(\orL, \orM, M_{\eps})$ by $\x \in \Ta \cap \T_{\beta}^{\eps^>}$, with differential
\begin{multline}
\label{eq:multidel}
 \del \x = \sum_{\y \in \Ta \cap \T_{\beta}^{\eps^>}}  \sum_{\substack{\phi \in \pi_2(\x, \y) \\ \mu(\phi)=1} }  \# (\M(\phi)/\R) \cdot \prod_{i \not \in I(\orL, \orM)}  U_{i,1}^{X_{i,1}(\phi) + \ldots + X_{i,p_i}(\phi)} V_{i,2}^{O_{i,2}(\phi)} \dots  V_{i,p_i}^{O_{i,p_i}(\phi)} \\
\cdot \prod_{i \in I(\eps)}  U_{i,1}^{X_{i,1}(\phi)} \cdots U_{i,\eps_i}^{X_{i,\eps_i}(\phi)}
U_{i,\eps_i+1}^{X_{i,\eps_i+1}(\phi) + \ldots + X_{i,p_i}(\phi)} V_{i,\eps_i+2}^{O_{i,\eps_i+2}(\phi)} \dots  V_{i,p_i}^{O_{i,p_i}(\phi)}\\
\cdot  \prod_{i \in I_-(\orL, \orM) - I(\eps)} U_{i,1}^{X_{i,1}(\phi)} \cdots U_{i,p_i}^{X_{i,p_i}(\phi)} 
   \prod_{i \in I_+(\orL, \orM)} U_{i,1}^{O_{i,1}(\phi)} \cdots U_{i,p_i}^{O_{i,p_i}(\phi)} 
  \prod_{j=n-q+1}^n U_{j}^{O_{j}(\phi)} \y.
 \end{multline}

This complex admits filtrations $\FF_i$, for $i \not\in I(\orL, \orM)$, such that
$$ \FF_i(\x) = -A_i(\x), \ \FF_i(U_{i,1})=-1, \ \FF_i(V_{i,j})=1, \ j \geq 2,$$
and $\FF_i$ takes all the other $U$, $V$ and $Y$ variables to zero. 

We then define $C^{\eps} = \Cc_{\eps^<}(\Hyper^{\orM-M_{\eps}}_{G,\eps^>}, M_{\eps}, \psi^{\orM-M_{\eps}}(\s))$ to be the subcomplex of $\Cc_{\eps^<}(\Hyper^{\orL, \orM-M_{\eps}}_{G,\eps^>}, M_{\eps})$ given by
$$ \FF_i \leq \psi^{\orM-M_{\eps}}_i(\s), \ i \not\in I(\orL, \orM).$$

\begin{example} When $\eps = \zero$, observe that $M_{\zero} = M_-$, $I(\zero)=I_-(\orL, \orM)$ and  $\Hyper^{\orM-M_{\zero}}_{G,\zero}=r_{M_+}(G).$ Therefore, we have identifications
$$ \Cc_{\zero}(\Hyper^{\orM-M_{\zero}}_{G,\zero}, M_{\zero}) \cong \Ccint(r_{M_+}(G))$$
and
\begin{equation}
\label{eq:Cinitial}
C^{\zero}=\Cc_{\zero}(\Hyper^{\orM-M_{\zero}}_{G, \zero}, M_{\zero}, \psi^{\orM-M_{\zero}}(\s)) \cong \Chain^-(G, \orM, p^{\orM}(\s)).
\end{equation}
(Compare the construction of $\Chain^-(G, \orM, p^{\orM}(\s))$ in Section~\ref{sec:inclusions}.)
\end{example}

\begin{example}
At the other extreme, for $\eps$ such that $I(\eps)=\emptyset$, we have $M_{\eps} = \emptyset$ and
$$
 C^\eps \cong \Chain^-(\Hyper^{\orM}_{G, \eps^>}, \psi^{\orM}(\s)),
 $$
 which is part of the preliminary hyperbox $H_{\prelim}$. Thus, the final complex in the hyperbox $H$, with $\eps= \dd'$, is
\begin{equation}
\label{eq:Cfinal}
C^{\dd'} \cong\Chain^-(\Hyper^{\orM}_G, \psi^{\orM}(\s)).
 \end{equation}
\end{example}

\subsection{Descent maps, Part 3: Differentials}
\label{sec:descent3}
For general $\eps, \eps'$ such that $\eps \leq \eps'$ and $\eps, \eps'$ are neighbors, we now proceed to define the maps
$$ D^{\eps' - \eps}_{\eps}: C^{\eps} \to C^{\eps'}$$
that are part of the hyperbox $H$. 

Since $\eps$ and $\eps'$ are neighbors, for any $i$ we must have $\eps'_i = \eps_i$ or $\eps'_i=\eps_i+1$. Observe that $I(\eps') \subseteq I(\eps)$. When an index $i$ belongs to $I(\eps) - I(\eps')$, it means that $\eps_i = p_i-2$ and $\eps'_i = p_i -1$, so $(\eps')^>$ has $0$ in position $i$. Thus, while $(\eps')^>$ may have more entries that $\eps^>$, all these additional entries are zero; let us denote by $(\eps^>)'$ the vector obtained from $(\eps')^>$ by deleting these entries. In the same manner, $(\eps^<)'$ may have fewer entries than $\eps^<$, in which case the additional entries of $\eps^<$ are $p_i-2$; we let $(\eps^<)'$ be the vector obtained from $(\eps^<)'$ by introducing new entries of $p_i-1$ in those positions. Thus, $(\eps^>)'$ has the same length as $\eps^>$, and $(\eps^<)'$ has the same length as $\eps^<$.

Note that $\Hyper^{\orM-M_{\eps'}}_{G, (\eps')^>}$, the diagram used to define $C^{\eps'}$, can be viewed as a reduction:
$$ \Hyper^{\orM-M_{\eps'}}_{G,(\eps')^>} = r_{M_{\eps} - M_{\eps'}} (\Hyper^{ \orM-M_{\eps}}_{G,(\eps^>)'}).$$

Thus, the complex $C^{\eps'}$ can be interpreted as being associated to the diagram $\Hyper^{\orM-M_{\eps}}_{G, (\eps^>)'}.$ Indeed, from the definitions we see that
\begin{align*}
 C^{\eps'} &=  \Cc_{(\eps')^<}(\Hyper^{\orM-M_{\eps'}}_{G,(\eps')^>}, M_{\eps'}, \psi^{\orM-M_{\eps'}}(\s)) \\
 &= \Cc_{(\eps')^<}(r_{M_{\eps} - M_{\eps'}} (\Hyper^{\orM-M_{\eps}}_{G,(\eps^>)'}), M_{\eps'}, \psi^{\orM-M_{\eps}}(\s)) \\
 &=  \Cc_{(\eps^<)'}(\Hyper^{\orM-M_{\eps}}_{G,(\eps^>)'}, M_{\eps}, \psi^{\orM-M_{\eps}}(\s)).
 \end{align*}

Similar remarks apply to the intermediate indices between $\eps$ and $\eps'$. Specifically, we can write the corresponding complexes as
$$ \Cc_{\nu}(\Hyper^{\orM-M_{\eps}}_{G,\eta}, M_{\eps}, \psi^{\orM-M_{\eps}}(\s))$$
for all $\nu, \eta$ with
$$ \eps^< \leq \nu \leq (\eps^<)', \ \ \ \eps^> \leq \eta \leq (\eps^>)'.$$
These complexes have almost the same definition as the one for index $\eps$, except that changing $\eps^>$ into $\eta$ results in a change in the underlying Heegaard diagram, whereas changing $\eps^<$ into $\nu$ results in a change in the exponents of $U_{i,j}$ for some $i\in I(\eps)$ in the formula \eqref{eq:multidel}; namely, $\eps_i$ gets replaced by $\nu_i$, which may equal $\eps_i + 1$.

In light of this discussion, to define $ D^{\eps' - \eps}_{\eps}$ we will focus on diagrams of the form
$$\Hyper^{\orM-M_{\eps}}_{G, \eta},$$
for indices $\eta$ such that
$$ \eps^> \leq \eta \leq (\eps^>)'.$$
These diagrams are naturally arranged in a $v$-dimensional hypercube, with 
$$v=\| (\eps^>)' - \eps^{>} \|.$$ 

The maps $ D^{\eps' - \eps}_{\eps}$ will be given by counts of holomorphic $k$-gons for $k \leq v+2$, coming from the above hypercube.

For any intermediate $\zeta$ between (and including) $\eps^>$ and $(\eps^>)'$, as part of the Heegaard diagram $\Hyper^{\orM-M_{\eps}}_{G, \eta}$ we have a collection of curves $\betas^\zeta$, obtained from $\betas$ by handleslides.  We let $\betas'$ denote $\betas^\zeta$ for $\zeta = (\eps^>)'.$ 

For any two intermediate indices $\zeta, \zeta'$ with
$$  (\eps^{>}) \leq \zeta \leq \zeta' \leq (\eps^>)'$$
we have an intersection point
$$ \Theta_{\zeta, \zeta'} =\Theta^{\can}_{\zed(\orM)^{\zeta}, \zed(\orM)^{\zeta'}} \in \T_{\beta^{\zeta}} \cap \T_{\beta^{\zeta'}}.$$

The map $D_{\eps}^{\eps'-\eps}: C^{\eps} \to C^{\eps'}$ is defined as follows. For an intersection point $ \x \in \Ta \cap  \Tb$, we set
\begin{equation}
\label{eq:bigformula}
D_{\eps}^{\eps'-\eps}(\x) = \sum_{E} \#\M(\phi) \cdot  \U^{u(E)} \y,
\end{equation}
where the summation is over data $E$ consisting of:
\begin{itemize}
\item$q \geq 0, $\\
\item $ \eps^> = \zeta^0< \dots < \zeta^q = (\eps^>)',$\\
\item $ \y \in  \Ta \cap  \Tbp$,\\
\item $ \phi \in \pi_2(\x, \Theta_{\zeta^0\zeta^1}, \dots, \Theta_{\zeta^{q-1}\zeta^q}, \y) \text{ with } \mu(\phi) = 1-q.$
\end{itemize}
In the formula \eqref{eq:bigformula}, by $\# \M(\phi)$ we mean the count of holomorphic polygons in the class $\phi$ (where we divide by the action of $\rr$ if we count bigons). Moreover, the factor $\U^{u(E)}$ is
\begin{multline}
\label{eq:multiU}
\U^{u(E)} :=  \prod_{i \not \in I(\orL, \orM)}  U_{i,1}^{X_{i,1}(\phi) + \ldots + X_{i,p_i}(\phi) } V_{i,2}^{O_{i,2}(\phi)} \dots  V_{i,p_i}^{O_{i,p_i}(\phi)} \\
\cdot \prod_{\{i \in I(\eps)\mid \nu_i = \eps_i \}}  U_{i,1}^{X_{i,1}(\phi)} \cdots U_{i,\eps_i}^{X_{i,\eps_i}(\phi)}
U_{i,\eps_i+1}^{X_{i,\eps_i+1}(\phi) + \ldots + X_{i,p_i}(\phi)} V_{i,\eps_i+2}^{O_{i,\eps_i+2}(\phi)} \dots  V_{i,p_i}^{O_{i,p_i}(\phi)}\\
\cdot \prod_{\{i \in I(\eps)\mid \nu_i = \eps_i+1 \}}  U_{i,1}^{X_{i,1}(\phi)} \cdots U_{i,\eps_i+1}^{X_{i,\eps_i+1}(\phi)}
\frac{U_{i,\eps_i+1}^{X_{i,\eps_i+2}(\phi) + \ldots + X_{i,p_i}(\phi)} - U_{i,\eps_i+2}^{X_{i,\eps_i+2}(\phi) + \ldots + X_{i,p_i}(\phi)} }{U_{i, \eps_i + 1} - U_{i, \eps_i + 2}} V_{i,\eps_i+3}^{O_{i,\eps_i+3}(\phi)} \dots  V_{i,p_i}^{O_{i,p_i}(\phi)}
\\
\cdot  \prod_{i \in I_-(\orL, \orM) - I(\eps)} U_{i,1}^{X_{i,1}(\phi)} \cdots U_{i,p_i}^{X_{i,p_i}(\phi)} 
\cdot   \prod_{i \in I_+(\orL, \orM)} U_{i,1}^{O_{i,1}(\phi)} \cdots U_{i,p_i}^{O_{i,p_i}(\phi)} 
 \cdot  \prod_{j=n-q+1}^q U_{j}^{O_j(\phi)}.
\end{multline}

\begin{remark}
Most of the factors in the expression \eqref{eq:multiU} are similar to those in the formula \eqref{eq:multidel} for the differential on the complex $C^{\eps}$. The new ingredient, which is the key factor responsible for the transition from $C^\eps$ to $C^{\eps'}$, is
$$ \frac{U_{i,\eps_i+1}^{X_{i,\eps_i+2}(\phi) + \ldots + X_{i,p_i}(\phi)} - U_{i,\eps_i+2}^{X_{i,\eps_i+2}(\phi) + \ldots + X_{i,p_i}(\phi)} }{U_{i, \eps_i + 1} - U_{i, \eps_i + 2}}.$$
The simplest setting where a factor of this type appears is that of a knot with four basepoints, discussed in \cite[Section 13.1]{LinkSurg}; see Equation (185) in the definition of the transition map there. Similar formulas appeared previously in the work of Sarkar \cite{SarkarMoving}. 
\end{remark} 

So far we have described the values of $D^{\eps'-\eps}_{\eps}$ at intersection points $\x$. To define the map in full, we extend it to be equivariant with respect to the action of all the $U_{i,j}, V_{i,j}, Y_{i,j}$ and $U_{j}$ variables that appear in the construction of the target $C^{\eps'}$. In view of the conditions \eqref{eq:condo}, note that the domain $C^{\eps}$ may have a few more variables, namely 
$$V_{i,\eps_i+2}, Y_{i, \eps_i + 2} \text{ for } i \in I(\eps) \text{ such that } \eps'_i = \eps_i + 1.$$

We let $D^{\eps'-\eps}_{\eps}$ be equivariant with respect to $V_{i, \eps_i+2}$, where we let this variable act on the target by $1$. With regard to $Y_{i, \eps_i + 2}$, we set 
$$D^{\eps'-\eps}_{\eps}(Y_{i, \eps_i + 2} \x) = 0\  \text{ when } \|\eps' - \eps\| > 1,$$
whereas when $\|\eps' - \eps\| =1$ there is a unique $i$ with $\eps'_i = \eps_i + 1$, and (for the extra variable $Y$ to exist) we must have $i \in I(\eps)$; that is,
$$ (\eps^<)'_i = \eps^<_i + 1, \ \  (\eps^>)'_i = \eps^>_i.$$

In this particular case and for that particular $i$, the formula for $D^{\eps'-\eps}_{\eps}(Y_{i, \eps_i + 2} \x)$ differs from the one for $D^{\eps'-\eps}_{\eps}( \x)$ by replacing 
$$U_{i,1}^{X_{i,1}(\phi)} \cdots U_{i,\eps_i+1}^{X_{i,\eps_i+1}(\phi)}
\frac{U_{i,\eps_i+1}^{X_{i,\eps_i+2}(\phi) + \ldots + X_{i,p_i}(\phi)} - U_{i,\eps_i+2}^{X_{i,\eps_i+2}(\phi) + \ldots + X_{i,p_i}(\phi)} }{U_{i, \eps_i + 1} - U_{i, \eps_i + 2}} V_{i,\eps_i+3}^{O_{i,\eps_i+3}(\phi)} \dots  V_{i,p_i}^{O_{i,p_i}(\phi)}$$
on the third line of the formula \eqref{eq:multiU}, with $1$.

We have now completed the description of the hyperbox $H$. We compress this hyperbox as in  Section~\ref{sec:hyper}, and define the descent map
\begin {equation}
\label {eq:dezed}
\hat \De^{\orM}_{p^{\orM}(\s)} : \Chain^-(G, \orM, p^{\orM}(\s))  \to \Chain^-(\Hyper^{\orM}_G, \psi^{\orM}(\s))
\end {equation}
to be the longest diagonal map in the resulting hypercube. Note that the domain $C^{\zero}$ and the target $C^{\dd'}$ of this map were identified in Equations~\eqref{eq:Cinitial} and \eqref{eq:Cfinal}.

\subsection {Destabilized complexes}
\label {subsec:ks}
Let $\zed = \{Z_1, \dots, Z_m\}$ be a consistent set of linked markings. Recall that $L(\zed) \subseteq L$ is the sublink consisting of those components $L_i$ such that at least one of the markings on $L_i$ is in $\zed$. Let $\orM$ a sublink containing $\orL(\zed)$, with a compatible orientation, as in Section~\ref{sec:DestabSeveral}. In that section we introduced Floer complexes $\CFm( \Hyper^{\orM}_G(\zed))$ and
$\Chain^-(\Hyper^{\orM}_G(\zed), \psi^{\orM}(\s))$, based on counting holomorphic
curves. Our next goal is to describe these complexes combinatorially.

Consider the grid diagram $G^{\zed}$ obtained from $G$ by 
deleting the markings in $\Xs_i$ when $\zed \cap \Os_i \neq
\emptyset$, deleting the markings in $\Os_i$ when $\zed \cap \Xs_i
\neq \emptyset$, and finally deleting the
rows and columns containing the markings in $\zed$.
The former free $O$ markings in $G$ remain as free
markings in $G^{\zed}$. However, in $G^\zed$ there may be some
additional free markings, coming from linked $O$ or $X$ markings in
$G$ that were not in $\zed$, but were on the same link component as a
marking in $\zed$. To be consistent with our previous conventions, we
relabel all the newly free $X$ markings as $O$'s.

Thus, $G^{\zed}$ becomes a grid diagram (with free markings) for the link $L - L(\zed)$, with the orientation induced from $\orL$. We can form a Floer complex
$$ C^-(G^{\zed}) = \CFm(G^{\zed})$$
as in Section~\ref{subsec:ags}, but defined over a ring with fewer $U$ variables. (We no longer have the $U$ variables corresponding to markings in $\zed$.) Note that holomorphic disks of index one in the symmetric product of $G^\zed$ are in one-to-one correspondence with rectangles on $G^\zed$.

If we delete and relabel more basepoints, according to the sublink $\orM - L(\zed) \subseteq L-L(\zed)$, we get a diagram
$$ \Hyper^{\orM -L(\zed)}_{G^{\zed}} (\emptyset) = r_{\orM - L(\zed)} (G^{\zed}).$$
We define complexes $\CFm(r_{\orM - L(\zed)} (G^{\zed}))$ and $\Chain^-(r_{\orM - L(\zed)} (G^{\zed}), \psi^{\orM}(\s))$ for $\s \in \bH(G)$, as in Section~\ref{subsec:ags}.

For $Z \in \zed$, let $U_Z$ be the variable corresponding to $Z$, and $U'_Z$ the variable corresponding to the row exactly under the row through $Z$ (in the grid $G$). We define a complex
$$ \K(\zed) =  \bigotimes_{Z \in \zed} \Bigl( \Ring \xrightarrow{U_{Z} - U'_{Z} } \Ring \Bigr).$$

Using the argument in \cite[Proposition 15.3]{LinkSurg}, for a suitable choice of almost complex structure on the symmetric product $\Sym^n (\TT)$, we have isomorphisms
\begin {equation}
\label {eq:psiz0}
 \Psi^{\orM, \zed} : \CFm(\Hyper^{\orM}_G(\zed)) \to  \CFm(r_{\orM - L(\zed)} (G^{\zed}))[[\{U_{Z}\}_{Z \in  \zed } ]] \otimes_{\Ring} \K(\zed).
\end {equation}
and
\begin {equation}
\label {eq:psiz}
 \Psi^{\orM, \zed}_{ \psi^{\orM}(\s)} : \Chain^-(\Hyper^{\orM}_G(\zed), \psi^{\orM}(\s)) \to  \Chain^-(r_{\orM - L(\zed)} (G^{\zed}), \psi^{\orM}(\s))[[\{U_{Z}\}_{Z \in  \zed } ]] \otimes_{\Ring} \K(\zed).
\end {equation}
This gives a combinatorial description of the complexes associated to the diagram $\Hyper^{\orM}_G(\zed)$.

To get an understanding of Equations~\eqref{eq:psiz0} and \eqref{eq:psiz}, note that there
is a clear one-to-one correspondence between the generators of each
side; see Figure~\ref{fig:annuli}. The differentials correspond likewise: as explained in \cite[proof of Proposition 15.3]{LinkSurg}, there is a filtration on the complex on the left hand side, such that the associated graded consists of copies of $ \K(\zed)$, one for each generator of $\CFm(r_{\orM - L(\zed)} (G^{\zed}))$. The differential on the associated graded involves bigons that pick up $U_Z$, as well as more complicated domains that pick up $U_{Z}'$, such as for example the annulus shown in Figure~\ref{fig:annuli}. Together, they give rise to the mapping cone of $U_Z - U'_Z$, which is a factor in $\K(\zed)$. There are also contributions to the left hand side differential that decrease the filtration degree; these exactly correspond to empty rectangles in the destabilized diagram $G^{\zed}$.

\begin{figure}
\begin{center}
\input{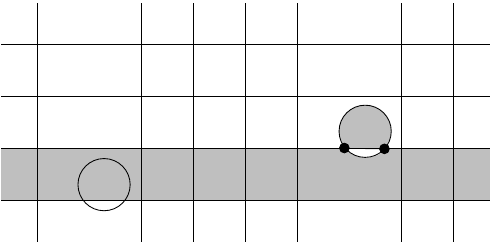_t}
\end{center}
\caption {{\bf The complex $\K(\zed)$.}  The figure shows
  part of a grid diagram with some arcs on the $\alpha$ and
  $\beta^{\zed}$ curves drawn. There are two intersection
  points (marked as bullets) between the alpha curve below the marking
  $Z_1 \in \zed$, and the corresponding beta curve. There are two differentials
  going from the left to the right generator: a bigon containing $Z_1$
  and an annulus containing $Z_2$, both drawn shaded in the
  diagram. This produces one of the factors in the definition of the
  complex $\K(\zed)$: the left bullet corresponds to the domain of the map $U_{Z_1} - U_{Z_2}$, and the right bullet to the target.  }
\label{fig:annuli}
\end{figure}

Note that the isomorphisms in ~\eqref{eq:psiz0} and \eqref{eq:psiz} are given by the above correspondence between generators. Their definition is thus purely combinatorial.

For a generic almost complex structure on the symmetric product, we still have maps of the form $\Psi^{\orM, \zed}$ and $\Psi^{\orM, \zed}_{ \psi^{\orM}(\s)}$, but they are chain homotopy equivalences rather than isomorphisms.

A particular instance of the discussion above appears when $\zed =\zed(\orM)$. In this setting, recall that we have $\orM=\orL(\zed)$, and $\Hyper^{\orM}_G(\zed)$ is denoted $\Hyper^{\orM}_G.$ The destabilized grid diagram $$G^{L - M}:=G^{\zed(\orM)}$$ is obtained from $G$ by
eliminating all rows and columns on which $M$ is supported. It represents the link $L - M$. 

For simplicity, we denote 
$$\K(M) = \K(\zed(\orM)).$$
The isomorphism \eqref{eq:psiz} becomes
\begin {equation}
\label {eq:psim}
\Psi^{\orM}_{ \psi^{\orM}(\s)} = \Psi^{\zed(\orM)}_{ \psi^{\orM}(\s)}: \Chain^-(\Hyper^{\orM}_G, \psi^{\orM}(\s)) \to  \Chain^-( G^{L-M}, \psi^{\orM}(\s))[[\{U_{i,j}\}_{L_i \subseteq M} ]] \otimes_{\Ring} \K(M).
\end {equation}

By composing the maps
\eqref{eq:Proj}, \eqref{eq:dezed} and \eqref{eq:psim}, we obtain a
map
\begin {equation}
  \begin{aligned}
\Phi_\s^{\orM} &: \Chain^- (G, \s) \longrightarrow \Chain^-(G^{L-M}, \psi^{\orM}(\s))[[\{U_{i,j}\}_{L_i \subseteq M }]] \otimes_{\Ring} \K(M),\\ 
\Phi_\s^{\orM} &= \Psi_{\psi^{\orM}(\s)}^{\orM} \circ \hat\De_{p^{\orM}(\s)}^{\orM} \circ \I^{\orM}_\s ,
  \end{aligned}
\label {eq:phims}\end {equation}
defined for any $\s \in  \bH(L)$. This was the map advertised in Section~\ref{sec:summary}.

\subsection {The surgery theorem}
\label {sec:surgery}

Let us fix a framing $\Lambda$ for the link $\orL$. For a component
$L_i$ of $L$, we let $\Lambda_i$ be its induced framing, thought of as
an element in $H_1(Y-L)$. This last group can be identified with
$\zz^\ell$ using the basis of oriented meridians for the
components. Under this identification, for $i \neq j$, the $j\th$
component of the vector $\Lambda_i$ is the linking number between
$L_i$ and $L_j$. The $i\th$ component of $\Lambda_i$ is its
homological framing coefficient of $L_i$ as a knot.

Given a sublink $N \subseteq L$, we let $\Omega(N)$ be the set of all possible orientations on $N$. For $\orN \in \Omega(N)$, we let
$$ \Lambda_{\orL, \orN} = \sum_{i \in I_-(\orL, \orN)} \Lambda_i \in \zz^\ell.$$
We consider the $\Ring$-module
\begin {equation}
\label {eq:cgl}
 \C^-(G, \Lambda) = \bigoplus_{M \subseteq L}\,\, \prod_{\s \in \H(L)} \Bigl( \Chain^-(G^{L - M}, \psi^{M}(\s))[[\{U_{i,j}\}_{ L_i \subseteq M}]] \Bigr)  \otimes_\Ring \K(M),
 \end {equation}
where $\psi^{M}$ simply means $\psi^{\orM}$ with $\orM$ being the orientation induced from the one on $\orL$.

We equip $\C^-(G, \Lambda)$ with a boundary operator $\D^-$ as follows. 

For $\s \in \H(L)$ and $\x \in \bigl( \Chain^-(G^{L - M}, \psi^{M}(\s))[[\{U_{i,j}\}_{ L_i
  \subseteq M}]] \bigr) \otimes_\Ring \K(M)$, we will denote by $(\s, \x)$ the element in the direct product from \eqref{eq:cgl} whose $\s$ coordinate is $\x$ and all of whose other coordinates are zero. 
 We set
 \begin {align*} 
\D^-(\s, \x) &= \sum_{\vphantom{\orN}N \subseteq L - M}\,\, \sum_{\orN \in \Omega(N)} (\s + \Lambda_{\orL, \orN}, \Phi^{\orN}_\s(\x)) \\
&\in  \bigoplus_{\vphantom{\orN}N \subseteq L - M} \,\,\bigoplus_{\orN \in \Omega(N)} \Bigl( \s+ \Lambda_{\orL, \orN}, \bigl( \Chain^-(G^{L-M-N}, \psi^{M \cup \orN} (\s)) [[\{U_{i,j}\}_{ L_i \subseteq M \cup N}]]\bigr)  \otimes_\Ring \K(M \cup N) \Bigr) \\
 &\subset \C^-(G, \Lambda).
\end {align*}

One can show that $\D^-$ squares to zero, so that $\C^-(G, \Lambda)$ is a chain complex; see \cite[Proposition 9.4 and Section 13.6]{LinkSurg}. The complex $\C^-(G, \Lambda)$ is the analogue of the mapping cone complex $\mathbb{X}^+(n)$ from \cite{IntSurg}, which computes the Heegaard Floer homology of integral surgery on a knot. The main new aspects are that we use the minus rather than the plus version, we have several link components (which produces the hypercube structure on the complex), and we have several basepoints per link component (which forces us to use the resolutions $\Chain^-$ of $\Am$, and thus makes the formulas more complicated).

Let $H(L, \Lambda) \subseteq \zz^\ell$ be the lattice generated by $\Lambda_i, i=1, \dots, \ell$. The complex  $\C^-(G, \Lambda)$ splits into a direct product of complexes $\C^-(G, \Lambda, \ux)$, according to equivalence classes $\ux \in \H(L)/H(L, \Lambda)$. (Note that $H(L, \Lambda)$ is not a subspace of $\H(L)$, but it still acts on $\H(L)$ naturally by addition.) Further, the space of equivalence classes $\H(L)/H(L, \Lambda)$ can be canonically identified with the space of $\spc$ structures on the three-manifold $S^3_\Lambda(L)$ obtained from $S^3$ by surgery along the framed link $(L, \Lambda)$.

Given a $\spc$ structure $\ux$ on $Y_\Lambda(L)$, we set 
$$ \delt(\ux) = \gcd_{\xi \in H_2(Y_\Lambda(L); \zz)} \langle c_1(\ux), \xi\rangle,$$
where $c_1(\ux)$ is the first Chern class of the $\spc$ structure. Thinking of $\ux$ as an equivalence class in $\H(L)$, we can find a function $\nu: \ux \to \zz/(\delt(\ux)\zz)$ with the property that
$$ \nu(\s + \Lambda_i) \equiv \nu(\s) + 2s_i,$$
for any $i=1, \dots, \ell$ and $\s = (s_1, \dots, s_\ell) \in \ux$. The function $\nu$ is unique up to the addition of a constant. For $\s \in \H(L)$ and $\x \in \bigl( \Chain^-(G^{L - M}, \psi^{M}(\s))[[\{U_{i,j}\}_{ L_i
  \subseteq M}]] \bigr) \otimes_\Ring \K(M)$, let
  $$ \mu(\s, \x) = M(\x) +\nu(\s) - m,$$
where $M(\x)$ is the Maslov grading of $\x$, and $m$ denotes the number of components of the sublink $M$. Then $\mu$ gives a relative $\zz/(\delt(\ux)\zz)$-grading on the complex $\C^-(G, \Lambda, \ux)$. The differential 
$\D^-$ decreases $\mu$ by one modulo $\delt(\ux)$.

The following is \cite[Theorem 15.8]{LinkSurg}:
\begin {theorem}
\label {thm:Surgery}
Fix a grid diagram $G$ with at least one free marking, such that $G$ represents an oriented link $\orL$ in $S^3$. Fix also a framing $\Lambda$ of $L$. Then, for every $\ux \in \spc(S^3_\Lambda(L))$, we have an isomorphism of relatively graded $\Field[[U]]$-modules: 
\begin {equation}
\label {eq:surgery}
 H_*(\C^-(G, \Lambda, \ux), \D^-) \cong  \HFm_{*}(S^3_\Lambda(L), \ux).
 \end {equation}
 \end {theorem}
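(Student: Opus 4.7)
The theorem is an application of the general link surgery formula proven in \cite{LinkSurg} to the specific class of multi-pointed Heegaard diagrams coming from toroidal grid diagrams. The strategy is therefore to invoke the surgery formula for an arbitrary complete system of hyperboxes of Heegaard diagrams for the link $\orL$ (which computes $\HFm(S^3_\Lambda(L),\ux)$ up to the expected tensor factor counted by extra basepoints), and then to identify each ingredient of that formula with the combinatorial objects defined in Sections~\ref{subsec:ags}--\ref{sec:desublink}. Once the ingredients agree, the resulting hypercube of chain complexes literally equals $(\C^-(G,\Lambda,\ux),\D^-)$.

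First, I would check the identification at a single vertex of the hypercube. For $M\subseteq L$ and $\s\in\H(L)$, the chain complex associated to the Heegaard diagram $(\TT,\alphas,\betas^{\zed(\orM)})$ with the appropriate basepoints is, by definition, $\Chain^-(\Ta,\Tb^{\zed(\orM)},\psi^M(\s))$; by the isomorphism \eqref{eq:psiz} provided by $\Psi^{\orM}_{\psi^M(\s)}$, this complex is chain homotopy equivalent to
$$\Chain^-(G^{L-M},\psi^M(\s))[[\{U_i\}_{L_i\subseteq M}]]\otimes_\Ring \K(M),$$
precisely the summand appearing in \eqref{eq:cgl}. The key combinatorial input here is that holomorphic disks of index one in $\Sym^n(\TT)$ correspond bijectively to empty rectangles on the grid (from \cite{MOS}), so the rectangle-counting differential on $\Chain^-(G,\s)$ equals the Floer differential. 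Similarly, the annuli illustrated in Figure~\ref{fig:annuli} realize the $U_{j(Z)}-U_{j(Z')}$ factors appearing in $\K(\zed)$.

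Next, I would match the maps along the edges and higher faces of the surgery hypercube with the compositions $\Phi^{\orN}_\s=\Psi^{\orN}_{p^{\orN}(\s)}\circ\hat\De^{\orN}_{p^{\orN}(\s)}\circ\Pr^{\orN}_\s$. The inclusion maps $\Pr^{\orN}_\s$ correspond to the canonical inclusions between generalized Floer complexes with different truncation data. The destabilization maps $\hat\De^{\orN}_{p^{\orN}(\s)}$ are, by construction in Section~\ref{sec:desublink}, the longest diagonal of the compressed hyperbox built out of the polygon-counting maps $\De^{\zed,\eps}_\s$ of Lemma~\ref{lemma:d2}; these are exactly the destabilization hyperboxes used in the surgery formula of \cite{LinkSurg}. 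The hypercube relation \eqref{eq:PhiPhi} ensures that $\D^-$ squares to zero, and the splitting into $\spc$ summands is governed by the lattice $H(L,\Lambda)$ acting on $\H(L)$, precisely as in \cite{LinkSurg}. The compression step, carried out in Section~\ref{sec:hyper}, is the natural way to flatten the hyperbox structure into an ordinary hypercube so that the resulting total complex of \eqref{eq:cgl} inherits a differential of square zero and a $\zz/\delt(\ux)\zz$-grading, which agrees with the formula defining $\mu$ (up to the overall constant $\nu$).

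Finally, I would account for the extra factor $H_*(T^{n-q-\ell})$. The grid diagram $G$ is a multi-pointed Heegaard diagram with more basepoints than the minimum required: the link has $\ell$ components, and on the linked markings side there are $n-q$ of them, giving $n-q-\ell$ extra linked basepoints beyond the one-per-component minimum, while the $q$ free markings each carry their own distinct $U$ variable and therefore do not contribute extra tensor factors. By the standard stabilization lemma for multi-pointed Heegaard Floer theory, each extra linked basepoint contributes a tensor factor of $V=H_*(T^1)$, yielding the claimed $H_*(T^{n-q-\ell})=V^{\otimes(n-q-\ell)}$. The hardest technical step is to verify that the almost complex structures used to construct the destabilization maps in Section~\ref{subsec:ks} can be chosen coherently so that the identifications $\Psi^{\orM}_\s$ and the polygon counts defining $\hat\De^{\orM}_\s$ fit together into a valid complete system of hyperboxes; this coherence is what allows the general surgery theorem of \cite{LinkSurg} to be applied verbatim, and it is precisely the content of \cite[Proposition 12.6]{LinkSurg}.
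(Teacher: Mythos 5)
Your proposal is correct and follows essentially the same route as the paper: Theorem~\ref{thm:Surgery} is not proved here at all but is quoted verbatim as \cite[Theorem 12.7]{LinkSurg}, i.e.\ the specialization of the general link surgery formula to grid diagrams with free markings, which is exactly the reduction you outline (identifying the vertex complexes via the isomorphisms~\eqref{eq:psiz}, the edge and face maps via the compressed hyperboxes and~\eqref{eq:PhiPhi}, and the extra $H_*(T^{n-q-\ell})$ factor coming from the surplus linked markings). The substantive verifications you flag (coherence of the complete system, \cite[Propositions 12.2 and 12.6]{LinkSurg}) are indeed where the work lives, and the paper defers them to \cite{LinkSurg} just as you do.
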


Observe that the left hand side of Equation~\eqref{eq:surgery} is a priori a module over $\Ring = \ff[[U_1, \dots, U_{n}]]$. Part of the claim of Theorem~\ref{thm:Surgery} is that all the $U_i$'s act the same way, so that we have a $\Field[[U]]$-module.

\subsection {Maps induced by surgery} \label {sec:Maps} Let $L' \subseteq L$ be a sublink, with the orientation $\orL'$ induced from~$\orL$. 

Denote by $W=W_{\Lambda}(L', L)$ the  cobordism from $S^3_{\Lambda|_{L'}}(L')$ to $S^3_{\Lambda}(L)$ given by surgery on $L - L'$, framed with the restriction of $\Lambda$.  Let  $H(L, \Lambda|_{L'}) \subseteq \zz^\ell$ be the sublattice generated by the framings $\Lambda_i$, for $L_i \subseteq L'$. There is an identification:
$$\spc ( W_{\Lambda}(L', L)) \cong \H(L)/H(L, \Lambda|_{L'})$$
under which the natural projection 
$$\pi^{L, L'} : \bigl( \H(L)/H(L, \Lambda|_{L'})\bigr) \longrightarrow \bigl(\H(L)/H(L, \Lambda)\bigr)$$
corresponds to restricting the $\spc$ structures to $S^3_{\Lambda}(L)$, while the map
$$ \psi^{L-L'}: \bigl(\H(L)/H(L, \Lambda|_{L'})\bigr) \to \bigl(\H(L')/H(L', \Lambda|_{L'})\bigr) $$
corresponds to restricting them to $S^3_{\Lambda|_{L'}}(L')$.

Observe that, for every equivalence class $\t \in \H(L)/H(L, \Lambda|_{L'})$,
$$ \C^-(G, \Lambda)^{L', \t} =
  \bigoplus_{L - L' \subseteq M \subseteq L}\,\prod_{\{\s \in \H(L)|[\s] = \t\}} \Bigl( \Chain^-(G^{L - M}, \psi^{M}(\s))[[\{U_{i,j}\}_{ L_i \subseteq M}]] \Bigr)  \otimes_\Ring \K(M)$$
is a subcomplex of $\C^-(G, \Lambda, \pi^{L, L'}(\t)) \subseteq \C^-(G, \Lambda)$. 
This subcomplex is chain homotopy equivalent to
 $$\C^-(G^{L'}, \Lambda|_{L'},  \psi^{L-L'}(\t)),$$ where
\begin{multline*}
\C^-(G^{L'}, \Lambda|_{L'},  \psi^{L-L'}(\t)) =\\
\bigoplus_{M' \subseteq L'} \prod_{\{\s' \in \H(L')| [\s'] = \psi^{L-L'}(\t)\}}  \Bigl( \Chain^-(G^{L' - M'}, \psi^{M'}(\s'))[[\{U_{i,j}\}_{ L_i \subseteq M'}]] \Bigr)  \otimes_{\Ring'} \K(M')
\end{multline*}
and $\Ring'$ is the power series ring in the $U_i$ variables from $L'$. The chain homotopy equivalence is induced by taking $M$ to $M' = M - (L-L')$, $\s$ to $\s'=\psi^{\orL - \orL'}(\s)$, and getting rid of the $U_i$ variables from $L - L'$ via relations coming from $\K(L-L')$.

Theorem~\ref{thm:Surgery} implies that the homology of $\C^-(G, \Lambda)^{L', \t}$ is isomorphic to 
$$ \HFm_{*}(S^3_{\Lambda'}(L'), \t|_{S^3_{\Lambda|_{L'}}(L')}).$$

In \cite{HolDiskFour}, Ozsv\'ath and Szab\'o associated a map $F^-_{W,\t}$ to any cobordism $W$ between connected three-manifolds, and $\spc$ structure $\t$ on that cobordism. In the case when the cobordism $W$ consists only of two-handles (i.e., is given by integral surgery on a link), the following theorem gives a way of understanding the map $F^-_{W, \t}$ in terms of grid diagrams:

\begin {theorem}
\label {thm:Cobordisms}
Let $\orL \subset S^3$ be an oriented link, $L' \subseteq L$ a sublink,  $G$ a grid diagram for $\orL$ with at least one free marking, and $\Lambda$ a framing of $L$. Set $W=W_{\Lambda}(L', L)$. Then, for any $\t \in \spc ( W) \cong \H(L)/H(L, \Lambda|_{L'})$, the following diagram commutes:
$$\begin {CD}
H_*(\C^-(G, \Lambda)^{L', \t}) @>{\phantom{F^-_{W, \t}}}>> H_*(\C^-(G, \Lambda, \pi^{L, L'}(\t))) \\
@V{\cong}VV @VV{\cong}V \\
\HFm_{*}(S^3_{\Lambda'}(L'), \t|_{S^3_{\Lambda|_{L'}}(L')}) @>{F^-_{W, \t}}>> \HFm_{*}(S^3_{\Lambda}(L), \t|_{S^3_{\Lambda|_{L}}(L)}).
\end {CD}$$
Here, the top horizontal map is induced from the inclusion of chain complexes, while the two vertical isomorphisms are the ones from Theorem~\ref{thm:Surgery}. 
\end {theorem}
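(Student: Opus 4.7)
The plan is to deduce this theorem from the corresponding cobordism statement in the link surgery formula of \cite{LinkSurg}, specializing the general construction there to Heegaard diagrams coming from a grid. The fundamental point is that, at the chain level, the Ozsv\'ath--Szab\'o cobordism map induced by surgery on $L-L'$ is modeled by the inclusion of a suitable subcomplex in the surgery complex; my task is therefore to identify $\C^-(G,\Lambda)^{L',\t}$ as exactly the right subcomplex and to verify that its inclusion represents $F^-_{W,\t}\otimes\Id$ under the isomorphism of Theorem~\ref{thm:Surgery}.

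First I would verify directly from~\eqref{eq:cgl} that $\C^-(G,\Lambda)^{L',\t}$ is indeed a subcomplex of $\C^-(G,\Lambda,\pi^{L,L'}(\t))$. Since this subspace is the sum over those $M$ with $L-L'\subseteq M$, every sublink $N\subseteq L-M$ appearing in the differential is automatically a sublink of $L'$. Consequently $\Lambda_{\orL,\orN}\in H(L,\Lambda|_{L'})$, so the shift $\s\mapsto\s+\Lambda_{\orL,\orN}$ preserves the class $\t\in\H(L)/H(L,\Lambda|_{L'})$, and the $M\cup N$-summand continues to contain $L-L'$. Identity~\eqref{eq:PhiPhi} then guarantees $(\D^-)^2=0$ on the subspace.

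Next, I would make precise the chain homotopy equivalence sketched in the text between this subcomplex and $\C^-(G^{L'},\Lambda|_{L'},\psi^{L-L'}(\t))$ tensored with an appropriate symmetric-torus factor. Using the relations built into $\K(L-L')$ one kills the $U_i$-variables associated to components of $L-L'$, and passing from $\s$ to $\s'=\psi^{\orL-\orL'}(\s)$ re-indexes the direct product over $\H(L')$; the remaining positional degrees of freedom coming from the additional rows and columns of $G$ combine to give the torus factor. Combining this with Theorem~\ref{thm:Surgery} applied separately to $G$ and to $G^{L'}$ yields both vertical isomorphisms in the diagram.

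The final and principal step is to show that the map on homology induced by the inclusion of subcomplexes is precisely $F^-_{W,\t}\otimes\Id$. This is essentially the content of the cobordism statement in \cite{LinkSurg}, which asserts that the cobordism maps of \cite{HolDiskFour} for two-handle cobordisms are computed, under the surgery isomorphism, by inclusions of exactly the subcomplexes above. I expect the main difficulty to be purely a matter of bookkeeping: one must verify that the identification $\H(L)/H(L,\Lambda|_{L'})\cong\spc(W)$ used here matches the one used in defining $F^-_{W,\t}$, and that $\pi^{L,L'}$ and $\psi^{L-L'}$ correspond respectively to restriction of $\spc$ structures to $S^3_\Lambda(L)$ and to $S^3_{\Lambda|_{L'}}(L')$. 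Once those identifications are checked, the commutative diagram is automatic.
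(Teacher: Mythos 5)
Your proposal matches the paper's approach: the paper proves Theorem~\ref{thm:Cobordisms} simply by invoking the general cobordism statement of the link surgery formula (Theorem 11.1 and Remark 12.8 of \cite{LinkSurg}), with the subcomplex identification and the chain homotopy equivalence to $\C^-(G^{L'},\Lambda|_{L'},\psi^{L-L'}(\t))$ tensored with a torus factor handled exactly as in your first two steps (this is the content of Section~\ref{sec:Maps}). The remaining $\spc$-structure bookkeeping you flag is likewise all that the paper checks, so your argument is essentially the intended one.
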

Theorem~\ref{thm:Cobordisms} is basically \cite[Theorem 14.3]{LinkSurg}, but stated here in the particular setting of grid diagrams. See \cite[Remark 15.9]{LinkSurg}.

\subsection {Other versions} The chain complex $\C^-(G, \Lambda, \ux)$ was constructed so that the version of Heegaard Floer homology appearing in Theorem~\ref{thm:Surgery} is $\HFm$. We now explain how one can construct similar chain complexes $\hat \C(G, \Lambda, \ux), \C^+(G, \Lambda, \ux)$ and $\C^{\infty}(G, \Lambda, \ux)$, corresponding to the theories $\widehat{\iHF},  \iHF^+$ and $\HFinf$. 

The chain complex $\hat \C(G, \Lambda, \ux)$ is simply obtained from $\C^-(G, \Lambda)$ by setting one of the variables $U_i$ equal to zero. Its homology computes $\widehat{\iHF}(S^3_{\Lambda}(L), \ux)$.

The chain complex $\C^\infty(G, \Lambda, \ux)$ is obtained from
$\C^-(G, \Lambda, \ux)$ by inverting all the $U_i$ variables. It is a
vector space over the field of semi-infinite Laurent polynomials
$$\Ring^{\infty} = \Field[[U_1, \dots, U_{n}; U_1^{-1},\dots, U_{n}^{-1}]. $$
In other words, $\Ring^{\infty}$ consists of those power series in
$U_i$'s that are sums of monomials with degrees bounded from below.

Note that $C^-(G, \Lambda, \ux)$ is a subcomplex of $\C^\infty(G,
\Lambda, \ux)$. We denote the quotient complex by $\C^+(G,
\Lambda, \ux)$. Theorems~\ref{thm:Surgery} and \ref{thm:Cobordisms}
admit the following extension:

\begin {theorem}
\label {thm:AllVersions}
Fix a grid diagram $G$, with at least one free marking, representing an oriented link $\orL$
in $S^3$. Fix also a framing $\Lambda$ of $L$. Then, for every $\ux \in \spc(S^3_\Lambda(L)) \cong \H(L)/H(L, \Lambda)$, there are vertical isomorphisms and horizontal long exact sequences making the following diagram commute:
$$\begin {CD}
\cdots \to  H_*(\C^-(G, \Lambda, \ux)) @>>> H_*(\C^{\infty}(G, \Lambda, \ux))  @>>> H_*(\C^+(G, \Lambda, \ux)) \to \cdots\\
@VV{\cong}V  @VV{\cong}V @VV{\cong}V \\
\cdots \to \HFm_{*}(S^3_\Lambda(L), \ux)  @>>>  \HFinf_{*}(S^3_\Lambda(L), \ux)  @>>>  \iHF^+_{*}(S^3_\Lambda(L), \ux)  \to \cdots
 \end {CD}$$

Furthermore, the maps in these diagrams behave naturally with respect to cobordisms, in the sense that there are commutative diagrams analogous to those in Theorem~\ref{thm:Cobordisms}, involving the cobordism maps $F^-_{W, \tt}, F^{\infty}_{W, \tt}, F^+_{W, \tt}$.
\end {theorem}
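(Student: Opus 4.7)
The plan is to reduce everything to Theorems~\ref{thm:Surgery} and~\ref{thm:Cobordisms} by combining two observations: the three combinatorial complexes $\C^-, \C^\infty, \C^+$ fit into a tautological short exact sequence, and every map that builds up $\C^-(G,\Lambda,\ux)$ is $\Ring$-linear, so it extends canonically after inverting the $U_i$'s or passing to the quotient.

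First I would set up the short exact sequence of chain complexes
$$0 \longrightarrow \C^-(G,\Lambda,\ux) \longrightarrow \C^\infty(G,\Lambda,\ux) \longrightarrow \C^+(G,\Lambda,\ux) \longrightarrow 0,$$
which is immediate from the definition of $\C^+$ as the quotient $\C^\infty/\C^-$; its long exact sequence in homology yields the top row of the diagram. On the Heegaard Floer side, the analogous short exact sequence $0 \to \CFm \to \CFinf \to \iCF^+ \to 0$ (with no completion needed on the $+$ side, since multiplication by $U$ is nilpotent on $\iCF^+$) produces the bottom row.

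Next, the vertical isomorphism for $\HFm$ is Theorem~\ref{thm:Surgery}. For $\HFinf$ I would repeat the proof of \cite[Theorem~12.7]{LinkSurg} verbatim, but with every generalized Floer complex $\Chain^-(G^{L-M},\s)$ replaced by its localization at the $U_i$'s: the structural maps $\Pr^{\orM}_\s$, $\hat\De^{\orM}_\s$ and $\Psi^{\orM}_\s$ are $\Ring$-linear, hence extend to the localized complexes, and the quasi-isomorphisms established in \cite{LinkSurg} remain quasi-isomorphisms after localization because tensoring with $\Ring^\infty$ is exact. The isomorphism for $\iHF^+$ then follows from the Five Lemma applied to the two long exact sequences, provided one verifies that the natural transformation from the top row to the bottom row commutes with the connecting homomorphisms; this commutativity is automatic from the $\Ring$-linearity of all maps involved. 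For naturality under cobordisms, the subcomplex $\C^-(G,\Lambda)^{L',\t}$ of Section~\ref{sec:Maps} admits evident $\infty$ and $+$ analogues obtained by inverting $U_i$'s and quotienting, and the argument of \cite[Theorem~11.1]{LinkSurg} identifying the induced map with $F^-_{W,\t}\otimes\Id$ adapts to the other two versions without essential modification, since the cobordism maps $F^\infty_{W,\t}$ and $F^+_{W,\t}$ are built from the same underlying triangle and polygon counts.

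The main obstacle I foresee is bookkeeping rather than substance: one must verify that the direct products over $\s \in \H(L)$ appearing in~\eqref{eq:cgl}, the $U_i$-adic completion implicit in $\Ring$, and the subsequent localization all interact well enough for the polygon-counting arguments to carry through. Because every individual polygon count is a finite sum, each map is $\Ring$-linear and extends canonically to the localized or quotient setting, so this ultimately reduces to routine homological algebra rather than genuinely new analysis.
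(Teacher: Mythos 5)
The paper does not actually prove Theorem~\ref{thm:AllVersions}: as with everything in Section~\ref{sec:Surgery}, it is quoted from \cite{LinkSurg} (the text says only ``Compare \cite[Theorem 11.2]{LinkSurg}''), so there is no in-paper argument to match yours against. Your skeleton --- the tautological short exact sequence $0 \to \C^-(G,\Lambda,\ux) \to \C^\infty(G,\Lambda,\ux) \to \C^+(G,\Lambda,\ux) \to 0$, its Heegaard Floer counterpart, the minus-version identification from Theorem~\ref{thm:Surgery}, and the five lemma for the plus version, with naturality as in Theorem~\ref{thm:Cobordisms} --- is the reasonable route and is in the spirit of the cited result.

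The step you cannot treat as routine, however, is the assertion that the minus-version argument goes through ``verbatim'' after inverting the $U_i$'s because ``tensoring with $\Ring^\infty$ is exact.'' The identification in Theorem~\ref{thm:Surgery} is produced by a zig-zag of quasi-isomorphisms between unbounded, $U$-adically completed complexes that are infinite direct products over $\s \in \H(L)$; for such complexes, flatness of $\Ring^\infty$ over $\Ring$ does not by itself imply that quasi-isomorphisms persist under base change (acyclicity of an unbounded complex of free modules is not preserved by an arbitrary tensor), and localization does not commute with the direct products and completions in \eqref{eq:cgl}. The same issue infects the five-lemma step, which needs the vertical identifications to come from chain-level maps compatible with the sub/quotient structure, not merely from isomorphisms on homology. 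The standard repair --- and the one used in \cite{LinkSurg}, which this paper itself invokes in the proof of Theorem~\ref{thm:Three} --- is the horizontal/vertical truncation of \cite[Section 6]{LinkSurg}: for a fixed $\ux$ one replaces each flavor of the surgery complex by a quasi-isomorphic finite-dimensional complex, after which base change, the comparison of the three flavors, and the connecting homomorphisms are genuinely routine. So your proposal is acceptable as an outline, but the localization claim needs to be replaced by (or justified through) the truncation argument rather than by an appeal to exactness of $-\otimes_\Ring \Ring^\infty$; you should also note that $\Ring^\infty$ is not a localization of $\Ring$ in the several-variable case, so even its flatness requires a word of justification.
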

Compare \cite[Theorem 14.4]{LinkSurg}.

\subsection {Mixed invariants of closed four-manifolds}

We recall the definition of the closed four-manifold invariant from \cite{HolDiskFour}. Let $X$ be a closed, oriented four-manifold with $b_2^+(X) \geq 2$. By puncturing $X$ in two points we obtain a cobordism $W$ from $S^3$ to $S^3$. We can cut $W$ along a three-manifold $N$ so as to obtain two cobordisms $W_1, W_2$ with $b_2^+(W_i) > 0$; further, the manifold $N$  can be chosen such that $\delta H^1(N; \Z) \subset H^2(W; \Z)$ is trivial. (If this is the case, $N$ is called an {\em admissible cut}.) Let $\tt$ be a $\spc$ structure on $X$ and $\tt_1, \tt_2$ its restrictions to $W_1, W_2$. In this situation, the cobordism maps
$$ F^-_{W_1, \tt_1} : \HFm(S^3) \to \HFm(N, \tt|_N)$$
and
$$ F^+_{W_2, \tt_2}: \iHF^+(N, \tt|_N) \to \iHF^+(S^3)$$
factor through $\iHF_{\operatorname{red}}(N, \tt|_N)$, where
$$ \iHF_{\operatorname{red}} = \coker(\HFinf \to \iHF^+) \cong \ker (\HFm \to \HFinf).$$
By composing the maps to and from $\iHF_{\operatorname{red}}$ we
obtain the mixed map
$$ F^{\operatorname{mix}}_{W, \tt}: \HFm(S^3) \to \iHF^+(S^3),$$
which changes degree by the quantity
$$ d(\tt) = \frac{c_1(\tt)^2 - 2\chi(X) - 3\sigma(X)}{4}.$$

Let $\Theta_-$ be the maximal degree generator in $\HFm(S^3)$.  Clearly the map $ F^{\operatorname{mix}}_{W, \tt}$ can be nonzero only when $d(\tt)$ is even and nonnegative. If this is the case, the value 
\begin {equation}
\label {eq:mixedOS}
\Phi_{X, \tt} = U^{d(\tt)/2} \cdot F^{\operatorname{mix}}_{W, \tt}(\Theta_-) \in \iHF^+_{0}(S^3) \cong \ff 
\end {equation}
is an invariant of the four-manifold $X$ and the $\spc$ structure
$\tt$. It is conjecturally the same as the Seiberg-Witten invariant of
$(X, \tt)$.

\begin {definition}
\label {def:lp}
Let  $X$ be a closed, oriented four-manifold with $b_2^+(X) \geq 2$. A {\em 
cut link presentation  for $X$} consists of a link $L \subset S^3$, a decomposition of $L$ as a disjoint union
$$ L = L_1 \amalg L_2 \amalg L_3,$$
and a framing $\Lambda$ for $L$ (with restrictions $\Lambda_i$ to $L_i, i=1, \dots, 4$)  
with the following properties:
\begin {itemize}
\item $S^3_{\Lambda_1}(L_1)$ is a connected sum of $m$ copies of $S^1
  \times S^2$, for some $m \geq 0$. We denote by $W_1$ the cobordism
  from $S^3$ to $\#^m (S^1 \times S^2)$ given by $m$ one-handle
  attachments.
\item $S^3_{\Lambda_1 \cup \Lambda_2 \cup \Lambda_3} (L_1 \cup L_2
  \cup L_3)$ is a connected sum of $m'$ copies of $S^1 \times S^2$,
  for some $m' \geq 0$. We denote by $W_4$ the cobordism from $\#^{m'}
  (S^1 \times S^2)$ to $S^3$ given by $m'$ three-handle attachments.
\item If we denote by $W_2$ resp.\ $W_3$ the cobordisms from $S^3_{\Lambda_1}(L_1)$ to $S^3_{\Lambda_1\cup \Lambda_2}(L_1 \cup L_2)$, resp.\ from $S^3_{\Lambda_1\cup \Lambda_2}(L_1 \cup L_2)$ to $S^3_{\Lambda_1 \cup \Lambda_2 \cup \Lambda_3} (L_1 \cup L_2 \cup L_3)$, given by surgery on $L_2$ resp.\ $L_3$ (i.e., consisting of two-handle additions), then 
$$ W = W_1 \cup W_2 \cup W_3 \cup W_4$$
is the cobordism from $S^3$ to $S^3$ obtained from $X$ by deleting two
copies of $B^4$.
\item The manifold $N=S^3_{\Lambda_1\cup \Lambda_2}(L_1 \cup L_2)$ is an admissible cut for $W$, i.e., $b_2^+(W_1 \cup W_2) > 0, b_2^+(W_3 \cup W_4) > 0$, and $\delta H^1(N) =0$ in $H^2(W)$.
\end {itemize}
\end {definition}

It is proved in \cite[Lemma 14.8]{LinkSurg} that any closed, oriented four-manifold $X$ with $b_2^+(X) \geq 2$ admits a cut link presentation.

\begin {definition}
Let  $X$ be a closed, oriented four-manifold with $b_2^+(X) \geq 2$. A {\em grid presentation} $\Gamma$ for $X$ consists of a cut link presentation $(L = L_1 \cup L_2 \cup L_3, \Lambda)$ for $X$, together with a grid presentation for $L$.
\end {definition}

The four-manifold invariant $\Phi_{X, \tt}$ can be expressed in terms of a grid presentation $\Gamma$ for $X$ as follows. By combining the maps $F^-_{W_2, \tt|_{W_2}}$ and $F^+_{W_3, \tt|_{W_3}}$ using their factorization through $\iHF_{\operatorname{red}}$, we obtain a mixed map
$$ F^{\mix}_{W_2 \cup W_3, \tt|_{W_2 \cup W_3}} : \HFm(\#^m (S^1 \times S^2)) \to \iHF^+(\#^{m'} (S^1 \times S^2)).$$

 Using Theorem~\ref{thm:AllVersions}, we can express the maps $F^-_{W_2, \tt|_{W_2}}$ and $F^+_{W_3, \tt|_{W_3}}$ in terms of counts of holomorphic polygons on a symmetric product of the grid. Combining these polygon counts, we get a mixed map 
$$ F^{\mix}_{\Gamma, \tt}: H_*(\C^-(G, \Lambda)^{L_1, \tt|_{\#^m(S^1 \times S^2)}}) \to H_*(\C^+(G, \Lambda)^{L_1 \cup L_2 \cup L_3, \tt|_{\#^{m'} (S^1 \times S^2)}}).$$
 We conclude that $F^{\mix}_{\Gamma, \tt}$ is the same as 
$ F^{\mix}_{W_2 \cup W_3, \tt|_{\#^m(S^1 \times S^2)}}$,
up to compositions with isomorphisms on both the domain and the target. Note, however, that at this point we do not know how to identify elements in the domains (or targets) of the two maps in a canonical way. For example, we know that there is an isomorphism
\begin {equation}
\label {eq:isoV}
H_*(\C^-(G, \Lambda)^{L_1, \tt|_{\#^m(S^1 \times S^2)}}) \cong \HFm(\#^m (S^1 \times S^2)),
\end {equation}
but it is difficult to say exactly what the isomorphism is. Nevertheless, $ \HFm(\#^m (S^1 \times S^2))$ has a unique maximal degree elements $\Theta_{\max}^m$. We can identify what $\Theta_{\max}^m$ corresponds to on the left hand side of ~\eqref{eq:isoV} by simply computing degrees. Let us denote the respective element by 
$$\Theta_{\max}^\Gamma \in H_*(\C^-(G, \Lambda)^{L_1, \tt|_{W_2 \cup W_3 }}). $$

The following proposition says that one can decide whether $\Phi_{X, \tt} \in \ff$ is zero or one from  information coming from a grid presentation $\Gamma$:

\begin {proposition}
\label {prop:mixed}
Let $X$ be a closed, oriented four-manifold $X$ with $b_2^+(X) \geq 2$, with a $\spc$ structure $\tt$ with $d(\tt) \geq 0$ even. Let $\Gamma$ be a grid presentation for $X$. Then $\Phi_{X, \tt} = 1$ if and only if  $U^{d(\tt)/2} \cdot F^{\mix}_{\Gamma, \tt} (\Theta_{\max}^\Gamma)$ is nonzero.
\end {proposition}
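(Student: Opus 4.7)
The plan is to unfold $\Phi_{X,\tt}$ according to the decomposition $W = W_1 \cup W_2 \cup W_3 \cup W_4$ coming from the cut link presentation, and then match the resulting expression with the grid-theoretic quantity $U^{d(\tt)/2}\cdot F^{\mix}_{\Gamma,\tt}(\Theta_{\max}^\Gamma)$ one cobordism at a time.

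By Equation~\eqref{eq:mixedOS} and functoriality of the cobordism maps,
$$\Phi_{X,\tt} = U^{d(\tt)/2}\cdot F^+_{W_4,\tt|_{W_4}}\circ F^{\mix}_{W_2\cup W_3,\tt|_{W_2\cup W_3}}\circ F^-_{W_1,\tt|_{W_1}}(\Theta_-).$$
Because $W_1$ consists solely of $1$-handles, the $1$-handle cobordism formula shows that $F^-_{W_1}$ sends the top-degree generator $\Theta_-$ of $\HFm(S^3)$ to the top-degree generator $\Theta_{\max}^m$ of $\HFm(\#^m(S^1\times S^2))$. Setting $\alpha := F^{\mix}_{W_2\cup W_3}(\Theta_{\max}^m)$, we obtain $\Phi_{X,\tt} = U^{d(\tt)/2}\cdot F^+_{W_4}(\alpha)$.

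On the grid side, the discussion preceding the proposition identifies $F^{\mix}_{\Gamma,\tt}$, up to the isomorphisms of Theorem~\ref{thm:AllVersions}, with $F^{\mix}_{W_2\cup W_3}\otimes\Id_V$, and $\Theta_{\max}^\Gamma$ corresponds under~\eqref{eq:isoV} to $\Theta_{\max}^m\otimes\Theta_V$. Consequently
$$U^{d(\tt)/2}\cdot F^{\mix}_{\Gamma,\tt}(\Theta_{\max}^\Gamma)\ \longleftrightarrow\ \bigl(U^{d(\tt)/2}\cdot\alpha\bigr)\otimes\Theta_V$$
inside $\iHF^+(\#^{m'}(S^1\times S^2),\tt|_{\#^{m'}(S^1\times S^2)})\otimes V$, which is nonzero if and only if $U^{d(\tt)/2}\cdot\alpha\neq 0$, since $\Theta_V\neq 0$.

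It therefore suffices to show $U^{d(\tt)/2}\cdot\alpha\neq 0$ if and only if $\Phi_{X,\tt}=F^+_{W_4}(U^{d(\tt)/2}\cdot\alpha)=1$. The $3$-handle cobordism $W_4$ induces a map on $\iHF^+(\#^{m'}(S^1\times S^2),\tt|_{W_4})$ that is essentially the projection onto the top-exterior-power summand, sending its bottom-degree generator $\Theta^{m'}_+$ to the generator $\Theta_+$ of $\iHF^+_0(S^3)\cong\ff$ and vanishing on the remaining summands. A grading computation, using $d(\tt)$, the dimension shifts contributed by the four cobordisms, and the admissibility $\delta H^1(N)=0$ from Definition~\ref{def:lp}, shows that $U^{d(\tt)/2}\cdot\alpha$ lands in the unique degree in which $\iHF^+(\#^{m'}(S^1\times S^2),\tt|_{W_4})$ is one-dimensional, spanned by $\Theta^{m'}_+$; on this summand $F^+_{W_4}$ is an isomorphism onto $\iHF^+_0(S^3)$. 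Thus $U^{d(\tt)/2}\cdot\alpha\neq 0$ if and only if its image under $F^+_{W_4}$ equals $\Theta_+$, i.e., if and only if $\Phi_{X,\tt}=1$. The main technical obstacle is precisely this last grading bookkeeping: one must carefully track Maslov shifts under the isomorphisms of Theorems~\ref{thm:Surgery} and~\ref{thm:AllVersions} in order to confirm that $U^{d(\tt)/2}\cdot\alpha$ lies exactly in the degree where the $3$-handle projection becomes an isomorphism.
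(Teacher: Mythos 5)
The paper gives no in-text argument for this proposition (it is deferred, with the rest of Section 2, to \cite[Theorem 11.7]{LinkSurg}), and your proof is correct and is essentially the intended one: unfold $F^{\mix}_{W,\tt}$ through the handle decomposition $W_1\cup W_2\cup W_3\cup W_4$, feed in the one-handle formula $F^-_{W_1}(\Theta_-)=\Theta^m_{\max}$, transfer to the grid side via the identification of $F^{\mix}_{\Gamma,\tt}$ with $F^{\mix}_{W_2\cup W_3,\tt|_{W_2\cup W_3}}\otimes \Id_V$ and of $\Theta^\Gamma_{\max}$ with $\Theta^m_{\max}\otimes\Theta_V$, and finish with the grading argument showing $U^{d(\tt)/2}\alpha$ sits in the bottom degree of $\iHF^+(\#^{m'}(S^1\times S^2))$, where the group is one-dimensional and the three-handle map is injective. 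Two wording slips worth fixing, neither affecting the argument: the three-handle map is projection onto the \emph{bottom} exterior-power summand (it kills the $\Theta^+$ factors and preserves $\Theta^-\otimes\cdots\otimes\Theta^-$, consistently with your statement about the bottom-degree generator), and the bottom degree need not be ``the unique degree in which the group is one-dimensional''---all you need is that the bottom-degree subspace is one-dimensional, spanned by that generator.
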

Compare \cite[Theorem 14.10]{LinkSurg}.
 
 \subsection{The link surgeries spectral sequence}
 \label {sec:spectral} 
 We recall the construction from \cite[Section 4]{BrDCov}. Let $M =
 M_1 \cup \dots \cup M_\ell$ be a framed $\ell$-component link in a
 3-manifold $Y$. For each $\eps = (\eps_1, \dots, \eps_\ell) \in
 \E_\ell = \{0,1\}^\ell$, we let $Y(\eps)$ be the $3$-manifold
 obtained from $Y$ by doing $\eps_i$-framed surgery on $M_i$ for $i=1,
 \dots, \ell$.

When $\eps'$ is an immediate successor to $\eps$ (that is, when $\eps < \eps'$ and $\|\eps' - \eps\| = 1$), the two-handle addition from $Y(\eps)$ to $Y(\eps')$ induces a map on Heegaard Floer homology
$$ F^-_{\eps < \eps'} : \HFm(Y(\eps)) \longrightarrow \HFm (Y(\eps')). $$

The following is the link surgery spectral sequence (Theorem 4.1 in \cite{BrDCov}, but phrased here in terms of $\HFm$ rather than $\widehat{\iHF}$ or $\iHF^+$):

\begin {theorem}[Ozsv\'ath-Szab\'o]
\label {thm:OSspectral}
There is a spectral sequence whose $E^1$ term is $\bigoplus_{\eps \in
  \E_\ell} \HFm(Y(\eps))$, whose $d_1$ differential is obtained by
adding the maps $F^-_{\eps < \eps'}$ (for $\eps'$ an immediate
successor of $\eps$), and which converges to
$E^{\infty} \cong \HFm(Y)$.
\end {theorem}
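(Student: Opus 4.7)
The plan is to realize $\HFm(Y)$ as the total complex of an $\ell$-dimensional hypercube of chain complexes indexed by $\eps \in \E_\ell$, with $\CFm(Y(\eps))$ at vertex $\eps$, and then filter by the cube degree $\|\eps\|$ to extract the spectral sequence.

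First I would set up the Heegaard data. Starting from a multi-pointed Heegaard diagram for $Y$ together with a bouquet for the framed link $M$, one obtains, for each $\eps \in \E_\ell$, an attaching set $\betas^\eps$ (given by $\beta_i$ or $\beta_i^{\eps_i}$ according to whether $\eps_i=0$ or $1$, exactly as in Section~\ref{sec:DestabSeveral}), so that the diagram $(\TT,\alphas,\betas^\eps)$ represents $Y(\eps)$. Between any two neighboring $\betas^\eps$ and $\betas^{\eps'}$ ($\eps<\eps'$), the curves intersect in two points each on the relevant circles, producing a canonical top-degree intersection point $\Theta_{\eps,\eps'}$. I would then define maps
\[
\De^{\eps^0 \precdot \eps^1 \precdot \cdots \precdot \eps^k}:\CFm(Y(\eps^0))\longrightarrow \CFm(Y(\eps^k))
\]
by counting isolated pseudo-holomorphic $(k+2)$-gons with edges on $\Ta,\Tb^{\eps^0},\dots,\Tb^{\eps^k}$ and corners $\Theta_{\eps^{i-1},\eps^i}$, in exact analogy with \eqref{eq:order}. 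Summing over all orderings yields maps $\De^{\eps}:\CFm(Y(\eps^0))\to\CFm(Y(\eps^0+\eps))$ as in \eqref{eq:dezede}.

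Second, the hypercube relation $\sum_{\eps'\leq\eps}\De^{\eps-\eps'}\circ\De^{\eps'}=0$ follows from examining the ends of the one-dimensional moduli spaces of polygons; this is the standard Gromov-compactness argument used in \cite[Lemma 9.7]{HolDisk} and recorded in \cite[Lemma 4.3]{BrDCov}. Consequently the total complex $(\bigoplus_\eps \CFm(Y(\eps)),\sum\De^{\eps})$ is a genuine chain complex.

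Third, I would show that this total complex is chain homotopy equivalent to $\CFm(Y)$. The cleanest route is induction on $\ell$. The case $\ell=1$ is the Ozsv\'ath--Szab\'o surgery exact triangle, which says exactly that $\CFm(Y)$ is quasi-isomorphic to the mapping cone of $f^-_{0<1}:\CFm(Y(0))\to\CFm(Y(1))$. For general $\ell$, one splits $\E_\ell = \E_{\ell-1}\times\{0,1\}$ and regards the hypercube as a mapping cone of a chain map between two $(\ell-1)$-dimensional sub-hypercubes; by the inductive hypothesis these compute $\HFm$ of the two manifolds obtained by ignoring the $\ell\th$ component, and the connecting map is the cobordism map corresponding to surgery on $M_\ell$, so the exact triangle finishes the step. (Alternatively one can invoke the hypercube quasi-isomorphism produced directly by a surgery/triangle iteration in the style of \cite{LinkSurg}.)

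Finally, I would filter the total complex by the decreasing filtration $\F^p=\bigoplus_{\|\eps\|\geq p}\CFm(Y(\eps))$. The associated graded splits the differential into its $\|\eps\|$-preserving part (the internal differentials of each $\CFm(Y(\eps))$) and length-$k$ cube jumps for $k\geq 1$. Thus $E^0=\bigoplus_\eps\CFm(Y(\eps))$ with $d^0$ the internal differential, whence $E^1=\bigoplus_\eps\HFm(Y(\eps))$; and $d^1$ is induced by the length-one edge maps $\De^{(0,\dots,1,\dots,0)}$, which on homology are precisely the two-handle cobordism maps $F^-_{\eps<\eps'}$. Since the filtration has finite length $\ell+1$, the spectral sequence converges strongly to the homology of the total complex, which by the previous step is $\HFm(Y)$.

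The one genuinely delicate step is the third one, identifying the total complex of the hypercube with $\CFm(Y)$; all the analytic input (transversality, Gromov compactness, degenerations of polygon moduli spaces) is concentrated there, and the rest of the argument is formal homological algebra of filtered complexes.
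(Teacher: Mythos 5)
This theorem is not proved in the paper: it is quoted as \cite[Theorem 4.1]{BrDCov} (adapted to $\HFm$), so the comparison is with the original Ozsv\'ath--Szab\'o argument. Your outline follows that argument in its standard shape: build the $\ell$-dimensional hypercube with $\CFm(Y(\eps))$ at the vertices and polygon-counting maps along the faces, verify the hypercube relations by degenerating one-dimensional moduli spaces, identify the total complex with $\CFm(Y)$, and then filter by $\|\eps\|$; the filtration/convergence part and the identification of $E^1$ and $d^1$ are fine and purely formal.

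The genuine gap is in your third step, which is exactly where all the content sits. Writing the hypercube as a mapping cone of a chain map $F$ between the two $(\ell-1)$-dimensional sub-hypercubes $X_0$ (with $\eps_\ell=0$) and $X_1$ (with $\eps_\ell=1$), the inductive hypothesis only gives quasi-isomorphisms $X_0\simeq \CFm\bigl(Y(\infty,\dots,\infty,0)\bigr)$ and $X_1\simeq \CFm\bigl(Y(\infty,\dots,\infty,1)\bigr)$; the homology-level exact triangle then says nothing about $H_*(\mathrm{Cone}(F))$. To finish you need two further things: (a) the chain-level (``cone'') form of the surgery triangle, namely that $\CFm$ of the $\infty$-filled manifold is quasi-isomorphic to the cone of the triangle-counting map, which in \cite{BrDCov} is proved via counts of quadrilaterals and pentagons, the small-model computation in the genus-one region, and an algebraic mapping-cone lemma (their Lemma 4.4); and (b) an identification, up to chain homotopy compatible with the inductive quasi-isomorphisms, of the connecting map $F$ --- which contains not only the edge maps in the $\ell\th$ direction but also all higher diagonal polygon maps --- with that triangle-counting map. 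Neither of these follows from ``the exact triangle finishes the step''; in \cite{BrDCov} the induction is run through an iterated-cone argument with explicit null-homotopies precisely to avoid this issue. (A smaller point: for the $\HFm$ statement one also needs the completed coefficients $\Field[[U]]$, as in \cite[Theorem 11.9]{LinkSurg}, for the cone-detection argument to make sense; with polynomial coefficients the relevant homotopies are infinite sums.) Your closing remark does flag this step as delicate, but as written the proposal does not supply the argument that makes it work.
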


\begin{remark}
  A special case of Theorem~\ref{thm:OSspectral} gives a spectral
  sequence relating the Khovanov homology of a link and the Heegaard
  Floer homology of its branched double cover.  See
  \cite[Theorem~1.1]{BrDCov}.
\end{remark}

The spectral sequence in Theorem~\ref{thm:OSspectral} can be
understood in terms of grid diagrams as follows.

We represent $Y(0,\dots, 0)$ itself as surgery on a framed link $(L', \Lambda')$ inside $S^3$. Let $L'_1, \dots, L'_{\ell'}$ be the components of $L'$. There is another framed link $(L=L_1 \cup \dots \cup L_\ell, \Lambda)$ in $S^3$, disjoint from $L'$, such that surgery on each component $L_i$ (with the given framing) corresponds exactly to the 2-handle addition from $Y(0, \dots, 0)$ to $Y(0, \dots, 0, 1, 0, \dots, 0)$, where the $1$ is in position $i$. For $\eps \in \E_\ell$, we denote by $L^\eps$ the sublink of $L$ consisting of those components $L_i$ such that $\eps_i = 1$.

Let $G$ be a toroidal grid diagram representing the link $L' \cup L
\subset S^3$. As mentioned in Section~\ref{sec:Maps}, inside the
surgery complex $\C^-(G, \Lambda' \cup \Lambda)$ (which is an
$(\ell'+\ell)$-dimensional hypercube of chain complexes) we have
various subcomplexes which compute the Heegaard Floer homology of
surgery on the sublinks on $L' \cup L$. We will restrict our attention
to those sublinks that contain $L'$, and use the respective
subcomplexes to construct a new, $\ell$-dimensional hypercube of chain
complexes $\C^-(G, \Lambda' \cup \Lambda \hey L)$ as follows.

At a vertex $\eps \in \E_\ell$ we put the complex 
$$ \C^-(G, \Lambda' \cup \Lambda \hey L)^\eps = \C^-(G^{L' \cup L^\eps}, \Lambda' \cup \Lambda|_{L^\eps}).$$

Consider now an edge from $\eps$ to $\eps' = \eps +\tau_i$ in the hypercube $\E_\ell$. The corresponding complex $\C^-(G^{L' \cup L^\eps}, \Lambda' \cup \Lambda|_{L^\eps})$ decomposes as a direct product over all $\spc$ structures $\s$ on $Y(\eps) = S^3(L' \cup L^\eps, \Lambda' \cup \Lambda|_{L^\eps})$. As explained in Section~\ref{sec:Maps}, each factor $\C^-(G^{L' \cup L^\eps}, \Lambda' \cup \Lambda|_{L^\eps}, \s)$  admits an inclusion into $\C^-(G^{L' \cup L^{\eps'}}, \Lambda' \cup \Lambda|_{L^{\eps'}})$ as a subcomplex. In fact, there are several such inclusion maps, one for each $\spc$ structure $\t$ on the 2-handle cobordism from $Y(\eps)$ to $Y(\eps')$ such that $\t$ restricts to $\s$ on $Y(\eps)$. Adding up all the inclusion maps on each factor, one obtains a combined map
$$G^-_{\eps < \eps'} : \C^-(G, \Lambda' \cup \Lambda \hey L)^\eps  \longrightarrow \C^-(G, \Lambda' \cup \Lambda \hey L)^{\eps'}.$$

We take $G^-_{\eps < \eps'}$ to be the edge map in the hypercube of chain complexes $\C^-(G, \Lambda' \cup \Lambda \hey L)$. Since the edge maps are just sums of inclusions of subcomplexes, they commute on the nose along each face of the hypercube. Therefore, in the hypercube $\C^-(G, \Lambda' \cup \Lambda \hey L)$ we can take the diagonal maps to be zero, along all faces of dimension at least two. 

This completes the construction of $\C^-(G, \Lambda' \cup \Lambda \hey L)$. As an $\ell$-dimensional hypercube of chain complexes, its total complex admits a filtration by $-\|\eps\|$, which induces a spectral sequence; we refer to the filtration by $-\|\eps\|$ as the {\em depth filtration} on 
$\C^-(G, \Lambda' \cup \Lambda \hey L)$.

The following is \cite[Theorem 14.12]{LinkSurg}, adapted here to the setting of grid diagrams:

\begin {theorem}
\label{thm:SpectralSequence}
Fix a grid diagram $G$ with at least one free marking, such that $G$ represents an oriented link $\orL' \cup \orL$ in $S^3$. Fix also framings $\Lambda$ for $L$ and $\Lambda'$ for $L'$.  Suppose  that $L$ has $\ell$ components $L_1, \dots, L_\ell$. Let $Y(0,\dots,0) = S^3_{\Lambda'}(L')$, and let $Y(\eps)$ be obtained from $Y(0,\dots,0)$ by surgery on the components $L_i \subseteq L$ with $\eps_i = 1$ (for any $\eps \in \E_\ell$). Then, there is an isomorphism between the link surgeries spectral sequence from Theorem~\ref{thm:OSspectral} and the spectral sequence associated to the depth filtration on $\C^-(G, \Lambda' \cup \Lambda \hey L)$.   
\end{theorem}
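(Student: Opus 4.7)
The plan is to deduce this from \cite[Theorem 11.9]{LinkSurg} by translating that result into the grid-diagram language developed in Sections~\ref{sec:hyper}--\ref{sec:desublink}. First I would identify the $E^1$-page of the depth-filtered spectral sequence. At each vertex $\eps \in \E_\ell$, the complex $\C^-(G, \Lambda' \cup \Lambda \hey L)^\eps$ is, by definition, a tensor product of $\C^-(G^{L' \cup L^\eps}, \Lambda' \cup \Lambda|_{L^\eps})$ with a torus factor. Applying Theorem~\ref{thm:Surgery} to the subgrid $G^{L' \cup L^\eps}$ (which inherits at least one free marking from $G$), and absorbing the extra torus factor built into the definition, I would obtain $\HFm(Y(\eps)) \otimes V$ on homology. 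The bookkeeping here amounts to checking that the grid size, number of components, and number of free markings in $G^{L' \cup L^\eps}$ combine so that the two torus factors join to a single $V = H_*(T^{n-q-\ell})$, independently of $\eps$.

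Next I would identify the $d_1$-differential. The edge map $G^-_{\eps<\eps'}$ is, by construction, a sum over all $\spc$ structures $\t$ on the two-handle cobordism $W_{\eps,\eps'}$ from $Y(\eps)$ to $Y(\eps')$ of inclusion-of-subcomplex maps. By Theorem~\ref{thm:Cobordisms}, each such inclusion induces, on homology, the cobordism map $F^-_{W_{\eps,\eps'}, \t} \otimes \Id_V$; summing over $\t$ recovers the differential $F^-_{\eps<\eps'}$ appearing in the link surgeries spectral sequence of Theorem~\ref{thm:OSspectral}. Hence the $E^1$-pages of the two spectral sequences, together with their $d_1$-differentials, match.

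To identify all higher pages, I would compare $\C^-(G, \Lambda' \cup \Lambda \hey L)$, as a filtered $\ell$-dimensional hypercube of chain complexes, with the filtered complex constructed from Heegaard diagrams in \cite[Theorem 11.9]{LinkSurg} (tensored with $V$). At each vertex the identification is provided by Theorem~\ref{thm:Surgery}; along each edge, both hypercubes use inclusion-of-subcomplex maps as differentials; and along each face of dimension $\geq 2$, the diagonal maps vanish in $\C^-(G, \Lambda' \cup \Lambda \hey L)$ by the construction in Section~\ref{sec:spectral}. The main obstacle is verifying that the higher diagonal maps can be chosen to vanish on the Heegaard side as well, after a suitable filtered chain homotopy; this is essentially the content of the argument in \cite[Section 11]{LinkSurg}, and the key point is that since the edge maps are inclusions of subcomplexes, the relations \eqref{eq:PhiPhi} force the higher diagonals to be chain nullhomotopic in a canonical way. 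Granted this, the two filtered complexes are filtered chain homotopy equivalent, and the spectral-sequence functor carries this to the desired isomorphism of spectral sequences from $E^1$ onward, compatibly with the convergents $\HFm(Y) \otimes V$.
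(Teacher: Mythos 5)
Your identification of the $E^1$ page and the $d_1$ differential is sound, and it mirrors what Sections~\ref{sec:Maps} and \ref{sec:spectral} already set up: Theorem~\ref{thm:Surgery} applied to the subgrids $G^{L' \cup L^\eps}$ (which do inherit the free markings) handles the vertices, and Theorem~\ref{thm:Cobordisms} identifies the sums of inclusions $G^-_{\eps < \eps'}$ with the maps $F^-_{\eps < \eps'} \otimes \Id_V$ on homology. Note also that the paper gives no independent argument for this statement: it is quoted as \cite[Theorem 11.9]{LinkSurg}, adapted to the grid setting, so deferring the substance to that reference is in itself consistent with the paper.

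The gap is in your third step. Agreement of $E^1$ and $d_1$ does not determine the higher pages; to transport the whole spectral sequence one needs an actual filtration-preserving chain map (or zig-zag of such) between the Ozsv\'ath--Szab\'o filtered complex underlying Theorem~\ref{thm:OSspectral} and the depth-filtered complex $\C^-(G, \Lambda' \cup \Lambda \hey L)$, inducing the $E^1$ isomorphism, after which the comparison theorem gives all later pages. The mechanism you propose for this does not work as stated: Equation~\eqref{eq:PhiPhi} is a relation internal to the differential of the surgery complex built from the grid, and it says nothing about the holomorphic-polygon homotopies along the higher-dimensional faces of the Ozsv\'ath--Szab\'o hypercube; there is no formal reason these can be ``chosen to vanish'' after a filtered homotopy, and indeed the existence of a strictly commuting model with zero diagonals is a \emph{consequence} of \cite[Theorem 11.9]{LinkSurg}, not something you may assume in order to prove it. Moreover, Theorem~\ref{thm:Surgery} only supplies isomorphisms on homology at each vertex, so it cannot provide the chain-level vertex identifications needed to assemble the filtered map you invoke. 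Constructing that filtered equivalence at chain level --- using the maps $\Phi^{\orM}_\s$, the compression of hyperboxes, and truncation --- is precisely the content of the proof of \cite[Theorem 11.9]{LinkSurg}; you should either cite that theorem wholesale (as the paper does) or actually carry out its argument, and the sketch as written does neither.
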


\section {Enhanced domains of holomorphic polygons}
\label {sec:enhanced}

\subsection {Domains and shadows}
\label {sec:shadows}

In the construction of the complex $\C^-(G, \Lambda)$ in Section~\ref{sec:surgery},
the only non-combinatorial ingredients were the holomorphic polygon counts in the definition of the descent maps $\hat\De_{p^{\orM}(\s)}^{\orM}$. 

As a warm-up to understanding the descent maps, let us consider the simpler polygon maps 
$$ \De^{\zed} : \CFm(\Hyper_G^{\orM}(\zed_0)) \to  \CFm(\Hyper_G^{\orM}(\zed_0 \cup \zed)).$$
from Section~\ref{sec:PolygonMaps}. (We use here the notation from Remark~\ref{rem:note}.) Here, $\zed_0$ and $\zed$ are disjoint collections of markings such that $\zed_0 \cup \zed$ is consistent. Further, $\orM$ is an oriented sublink containing $\orL(\zed_0 \cup \zed)$, such that the orientations are compatible (but do not necessarily agree with the orientation induced from $\orL$).

According to Equation~\eqref{eq:dezeds}, 
the maps $\De^\zed$ are in turn summations of maps of the form $\De^{(\zed)}$, where $(\zed)$ denotes an ordering of a consistent collection of markings $\zed$.

Let $(\zed) = (Z_1, \dots, Z_k)$. The maps
$$ \De^{(\zed)} :  \CFm(\Hyper_G^{\orM}(\zed_0)) \to  \CFm(\Hyper_G^{\orM}(\zed_0 \cup \zed))$$
correspond to handleslides (in the given order) at a set of markings $\zed$, starting
with a diagram $G$ already destabilized at a base set of markings
$\zed_0$. Our goal is to get as close as possible to a combinatorial description of these
maps. In light of the isomorphisms~\eqref{eq:psiz0}, we can assume
without loss of generality that $\zed_0 = \emptyset$. (Indeed, the polygon maps commute with these isomorphisms, by the results on quasi-stabilizations from \cite[Section 6]{LinkSurg}.) 

The difficulty lies in understanding the counts of pseudo-holomorphic polygons $\# \M(\phi)$, where
$$  \phi \in \pi_2(\x, \Theta^{\can}_{\emptyset, \{Z_1\}}, \Theta^{\can}_{\{Z_1\}, \{Z_1, Z_2\}}, \dots, \Theta^{\can}_{\{Z_1, \dots, Z_{k-1}\}, \{Z_1, \dots, Z_{k}\} } , \y)$$ 
is a homotopy class of $(2+k)$-gons with edges on $$\Ta, \Tb, \Tb^{\{Z_1\}}, \Tb^{\{Z_1, Z_2\}}, \dots, \Tb^{\{Z_1, \dots, Z_k\}},$$ in this cyclic order. The Maslov index $\mu(\phi)$ is required to be $1-k$.

The same polygon counts appear in the definition of the descent maps. There, we keep track of the counts in more complicated ways, using $U$ and $V$ variables, with exponents that involve the multiplicities $O_{i,j}(\phi), X_{i,j}(\phi)$ and $O_j(\phi)$. 

As in \cite[Definition 2.13]{HolDisk}, every homotopy class $\phi$ has
an associated {\em domain} $D(\phi)$ on the surface $\TT$. The domain
is a linear combination of regions, i.e., connected components of the
complement in $\TT$ of all the curves $\alphas, \betas, \betas^{\{Z_1\}}, \betas^{\{Z_1, Z_2\}}, \dots, \betas^{\{Z_1, \dots, Z_k\}}$. If $\phi$ admits a
holomorphic representative for some almost complex structure, then $D(\phi)$ is a linear combination of regions, all appearing with nonnegative coefficients. (Here and in the rest of the paper, we implicitly assume that the almost complex structures are nearly symmetric in the sense of \cite[Definition 3.1]{HolDisk}; that is, we pick one basepoint $z_i$ in each region, and ask that the structures be standard in the neighborhood of each divisor $\{z_i\} \times \Sym^{n-1}(\TT)$, and also standard in the neighborhood of the diagonal. Given this, positivity of the domain follows from \cite[Lemma 3.2]{HolDisk}.) 

Let us mark an asterisk in each square of the grid diagram $G$. When
we construct the new beta curves $\betas^{\{Z_1\}}, \betas^{\{Z_1, Z_2\}}, \dots, \betas^{\{Z_1, \dots, Z_k\}}$ (all obtained from the original $\betas$ curves by handleslides), we make sure that each beta curve
encircling a marking does not include an asterisk, and also that
whenever we isotope a beta curve to obtain a new beta curve
intersecting it in two points, these isotopies do not cross the
asterisks.  Then the regions on $\TT$ fall naturally into two types:
{\em small} regions, which do not contain asterisks, and {\em large}
regions, which do. We define the {\em shadow} $\sh(R)$ of a large
region $R$ to be the square in $G$ containing the same asterisk as
$R$; and the shadow of a small region to be the empty set.

If 
$$D = \sum a_i R_i$$
is a linear combination of regions, with $a_i \in \zz$, we define its
shadow to be
$$ \sh(D) = \sum a_i \sh(R_i).$$

The {\em shadow} $\sh(\phi)$ of a homotopy class $\phi$ is defined as $\sh(D(\phi))$. See Figure~\ref{fig:shadow}.

\begin{figure}
\begin{center}
\input{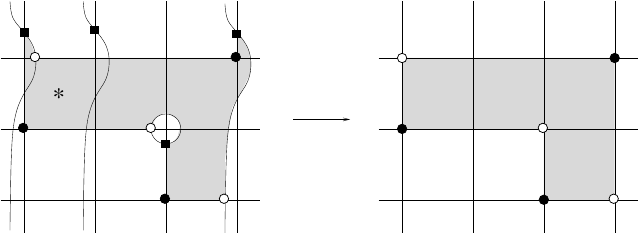_t}
\end{center}
\caption {{\bf The domain of a triangle and its shadow.}  On the left,
  we show the domain $D(\phi)$ of a homotopy class $\phi$ of triangles
  in $\Sym^n(\TT)$. The alpha and beta curves are the horizontal and
  vertical straight lines, respectively, while the curved arcs
  (including the one encircling $Z$) are part of the $\beta^{\{Z\}}$
  curves. The black squares mark components of $\Theta^{\can}_{\emptyset, \{Z\}} \in \Tb
  \cap \Tb^{\{Z\}}$. On the right, we show the shadow $\sh(\phi)$.}
\label{fig:shadow}
\end{figure}

We will study homotopy classes $\phi$ by looking at their shadows,
together with two additional pieces of data as follows.

First, each homotopy class $\phi$ corresponds to a $(2+k)$-gon where
one vertex is an intersection point $\y \in \Ta \cap \Tb^{\zed}$. In particular, $\y$ contains exactly one of the two
intersection points between the alpha curve just below the marking
$Z_i$ and the beta curve encircling $Z_i$, for $i=1, \dots, k$. Set
$\epsilon_i = 0$ if that point is the left one, and $\epsilon_j = 1$
if it is the right one. (This corresponds to the fact that a generator $\y$ containing the right point has Maslov grading one higher than if it had the left point.)

Second, for $i=1, \dots, k$, let $\rho_i \in \Z$ be the multiplicity of the domain $D(\phi)$ at the marking $Z_i$. 

Then, we define the {\em enhanced domain} $E(\phi)$ associated to
$\phi$ to be the triple $(\sh(\phi), \epsilon(\phi), \rho(\phi))$
consisting of the shadow $\sh(\phi)$, the collection
$\epsilon(\phi)=(\epsilon_1, \dots, \epsilon_k)$ corresponding to
$\y$, and the set of multiplicities $\rho(\phi) = (\rho_1, \dots,
\rho_k)$.

\subsection {Grid diagrams marked for destabilization}
\label {sec:uleft}

We now turn to studying enhanced domains on their own (rather than associated to some $\phi$). Note that in the previous section we considered polygon maps where we only keep track of the markings determined by the sublink $\orM$; that is, the markings in $\Os^{\orM}$ and $\Xs^{\orM}$. Further, we assumed $\zed_0=\emptyset$, and the set $\zed=\{Z_1, \dots, Z_k\}$ where we did the handleslides was a subset of $\Os^{\orM}$, coming from some linked $O$ and $X$ markings on the original grid $G$. We can view $\zed$ as a set of free markings on the reduction $r_{\orM}(G)$. In fact, by replacing $G$ with $r_{\orM}(G)$, we can focus on the following setting.

Consider a toroidal grid diagram $G$ of grid number $n$, with $q \geq 1$ free $O$ markings. (Our $G$ here corresponds to $r_{\orM}(G)$ from the previous subsection, and $q$ corresponds to the previous $q$ plus the number of $O$ basepoints on $M$.) We consider the $n$ $O$'s with associated variables $U_1,
\dots, U_n$, the last $q$ of which are free. We also have $n-q$ linked $X$ markings, with associated variables $\tU_1, \dots, \tU_{n-q}$.

Some subset
$$ \zed = \{O_{i_1}, \dots, O_{i_k} \} $$
of the free $O$'s, corresponding to indices $i_1, \dots, i_k
\in \{n-q+1, \dots, n\}$, is {\em marked for destabilization}. (These are the
markings $Z_1, \dots, Z_k$ from the previous subsection.) We will assume\footnote{This assumption is always satisfied in our setting, because in the previous subsection we had a grid with at least one free basepoint, and only did handleslides over some linked basepoints.} that $k < n$. The
corresponding points on the lower left of each $O_{i_j}$ (the intersections of the $\alpha$ curve just below $O_{i_j}$ with the $\beta$ curve just to its left) are denoted
$p_j$ and called {\em destabilization points}.

Recall that in Section~\ref{subsec:ags} we defined a Floer complex $C^-(G)=\CFm(\Ta, \Tb)$, generated (as a module over the ring $\Ring$) by the set $\S(G)=\Ta \cap \Tb$. We also had a curved   link Floer complex $\CFLc(G)$, generated by the same $\S(G)$ but over the bigger ring $\tR$, which includes $\tU$ variables to keep track of the $X$ markings.

Let $G^\zed$ be the destabilized diagram, and let $\Ring^\zed$ and $\tR^\zed$ be the corresponding ground rings, so that, for example,
$$ \Ring = \Ring^\zed[[\{U_Z\}_{Z \in \zed}]].$$
We have a Floer complex $C^-(G^\zed)$ over $\Ring^\zed$ and a curved Floer complex $\CFLc(G^\zed)$ over $\tR^\zed$. We identify $\S(G^\zed)$ with a subset of $\S(G)$ by adjoining to an element of $\S(G^\zed)$ the destabilization points.

The set of {\em enhanced generators} $\ES(G, \zed)$ consists of pairs $(\y,
\epsilon)$, where $\y\in \S(G^\zed)$ and $\epsilon = (\epsilon_1, \dots,
\epsilon_k)$ is a collection of markings ($\epsilon_j = 0$ or $1$) at each
destabilization point.  We will also denote these markings as $L$ for
a left marking ($\epsilon_j = 0$) or $R$ for a right marking
($\epsilon_j = 1$).
The homological grading of an
enhanced generator is
\[ M(\y, \epsilon) = M(\y) + \sum_{j=1}^k \epsilon_j. \]

We define an enhanced destabilized complex $\EC^-(G, \zed)$ whose generators are $\ES(G, \zed)$. It is formed as the tensor product (over $\Ring$) of
$$ C^-(G^\zed) \otimes_{\Ring^\zed} \Ring = C^-(G^\zed)[[\{U_Z\}_{Z \in \zed}]]$$
and $k$ mapping cones 
\begin {equation}
\label {mapcone}
\Ring \xrightarrow{U_{i_j} - U_{i'_j}} \Ring
\end {equation}
given by maps from $L$ to $R$, where
$U_{i'_j}$ corresponds to the $O$ in the row directly below the
destabilization point $p_j$. For a suitable choice of almost-complex
structure on the symmetric product, there is a natural isomorphism as in \eqref{eq:psiz0},
$$\EC^-(G, \zed) \cong \CFm(\Hyper_G(\zed)).$$  

Here, the diagram $\Hyper_G(\zed)$ is the one obtained from $G$ by handlesliding $\beta$ curves over the markings in $\zed$.\footnote{We first used the notation $\Hyper_G(\zed)$ in Section~\ref{sec:DestabSeveral}, where $\zed$ was a set of linked markings. In our current setting, since the reduction $r_{\orM}G$ plays the role of $G$, our set $\zed$ is made of free markings. Thus, $\Hyper_G(\zed)$ still represents the link $\orL$, just like $G$.} Since the diagram $\Hyper_G(\zed)$ is obtained from $G$ by handleslides, the complexes $\CFm(\Hyper_G(\zed))$ and $\CFm(G)=C^-(G)$ are chain homotopy equivalent (via maps counting pseudo-holomorphic triangles). Therefore, $\EC^-(G, \zed)$ and $C^-(G)$ are chain homotopy equivalent and hence quasi-isomorphic.

A combinatorial version of this fact appeared in \cite[Section~3.2]{MOST}, in the case $k=1$. Then, there is a quasi-isomorphism 
\begin{equation}
  \label{eq:destab-chain-map}
  F: C^-(G) \to \EC^-(G, \zed)
\end{equation}
given by counting snail-like domains as in Figure~\ref{fig:ULeft}. For future reference, let us give a formal definition of these domains, following \cite[Definition  3.4]{MOST}.

\begin{definition}
\label{def:snail}
Let $G$ be a grid diagram with a single $O=O_{i_1}$ marked for destabilization, and let $p=p_1$ be the destabilization point. A domain $D$ from $\x \in \S(G)$ to $\y \in \S(G^{\zed}) \subset S(\G)$ is said to be {\em snail-like (of type L or R)} if it satisfies the following conditions:
\begin{itemize}
\item $D$ has only non-negative local multiplicities;
\item For each $c \in \x \cup \y$, other than $p$, at least three of the four adjoining squares have vanishing local multiplicities;
\item In a neighborhood of $p$, the local multiplicities in three of the adjoining squares are the same number $m$. When $D$ has type $L$, the lower left corner has local multiplicity $m-1$, while for $D$ of type $R$, the lower right corner has multiplicity $m+1$.
\item The boundary $\del D$ is connected.
\end{itemize}
\end{definition}

It is easy to see that the shape of a snail-like domain is highly constrained. For each type ($L$ or $R$), there is an infinite sequence of possible shapes, as shown in Figure~\ref{fig:ULeft}.

Let us now go back to the case of a grid diagram $G$ with an arbitrary number $k$ of $O$'s marked for destabilization. If we want to keep track of the $X$ markings as well, we construct an enhanced curved complex $\ECFLc(G, \zed)$ as the tensor product of 
$$ \CFLc(G^\zed)  \otimes_{\tR^\zed} \Ring =\CFLc(G^\zed)[[\{U_Z\}_{Z \in \zed}]] $$
and $k$ mapping cones of the form
$$\tR \xrightarrow{U_{i_j} - U_{i'_j}} \tR,$$
or
$$\tR \xrightarrow{U_{i_j} - U_{i'_j}\tU_{i'_j}} \tR.$$
Here, we use the latter type of map $U_{i_j} - U_{i'_j} \tU_{i'_j}$ in case the $O$ in the row directly below $p_j$ is a linked marking; we then let $\tU_{i_j}$ be the variable corresponding to the $X$ in the same row. 

We have an isomorphism
$$\ECFLc(G, \zed) \cong \CFLc(\Hyper_G(\zed)),$$
and a chain homotopy equivalence between these curved complexes and $\CFLc(G)$.

\begin{figure}
\begin{center}
\input{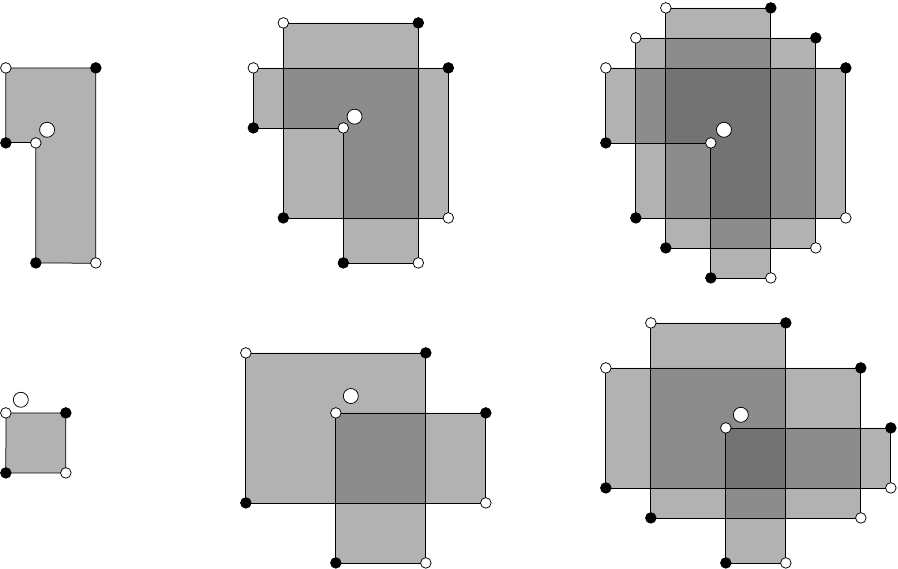_t}
\end{center}
\caption {{\bf Snail-like domains for destabilization at one point.}
We label initial points by dark circles, and terminal points by empty circles.
The top row lists domains of type $L$ (i.e., ending in an enhanced generator with an $L$ marking), while the second row lists
some of type $R$. Darker shading corresponds to
higher local multiplicities.  The domains in each row (top or bottom) are part of an infinite series, corresponding to increasing complexities. The series in the first row also contains the trivial domain of
type~$L$, not shown here.}
\label{fig:ULeft}
\end{figure}

\begin{definition}
  \label{def:EnhancedDomain}
Given a domain $D$ on the $\alpha$-$\beta$ grid (that is, a linear combination of squares), we let $O_i(D)$ be the multiplicity of $D$ at $O_i$. We define an {\em enhanced domain} $(D, \epsilon, \rho)$ to consist of:

\begin {itemize}
\item A domain $D$ on the grid between points $\x \in \S(G)$ and $\y \in \S(G^\zed)$ (in particular, the final configuration contains all destabilization points);

\item A set of markings $\epsilon = (\epsilon_1, \dots, \epsilon_k)$
  at each destabilization point (so that $(\y, \epsilon)$ is an
  enhanced generator); and

\item A set of integers $\rho=(\rho_1, \dots, \rho_k)$, one for each destabilization point.
\end {itemize}
\end{definition}

We call $\rho_j$ the {\em real multiplicity} at $O_{i_j}$. The number
$t_j = O_{i_j}(D)$ is called the {\em total multiplicity}, and the
quantity $f_j = t_j - \rho_j$ is called the {\em fake multiplicity} at
$O_{i_j}$. The reason for this
terminology is that, if $D$ is the shadow of a holomorphic
$(k+2)$-gon~$D'$, then the real multiplicity $\rho_j$ is the multiplicity
of $D'$ at $O_{i_j}$. On the other hand, the fake multiplicity is the difference between the multiplicity that appears in the shadow $D$ (namely, $t_j$) and the one in the polygon $D'$.  For example, in the domain pictured in Figure~\ref{fig:shadow}, the fake multiplicity at the $Z$ marking is one.

Consider the full collection of real multiplicities 
$$(N_1, \dots,
N_n, \tN_1, \dots, \tN_{n-q}),$$ where $N_{i_j} = \rho_j$ for $j \in \{1, \dots, k\}$ and $N_i
= O_i(D)$ when $O_i$ is not one of the $O$'s used for
destabilization; also, $\tN_i=X_i(D)$. We say that the enhanced domain $(D, \epsilon, \rho)$
goes from the generator $\x$ to $U_1^{N_1} \cdots U_n^{N_n} \tU_1^{\tN_1} \dots \tU_{n-q}^{\tN_{n-q}} \cdot
(\y, \epsilon)$. These are called the initial and final point of the
enhanced domain, respectively.

We define the \emph{index} of an enhanced domain to be
\begin{equation}
  \label{eq:enhanced-index}
\begin {aligned}
I(D, \epsilon, \rho) &= M(\x) - M(U_1^{N_1} \cdots U_n^{N_n} \tU_1^{\tN_1} \dots \tU_{n-q}^{\tN_{n-q}} \cdot (\y, \epsilon)) \\ 
&= M(\x) - M(\y) - \sum_{j=1}^k \epsilon_j  + 2\sum_{j=1}^n N_i \\ 
&= M(\x) - M(U_1^{O_1(D)}\cdots U_n^{O_n(D)}\cdot \y) - \sum_{j=1}^k (\epsilon_j + 2 f_j) \\
&= I(D) -  \sum_{j=1}^k (\epsilon_j + 2 f_j).
\end {aligned}
\end{equation}
Here $I(D)$ is the ordinary Maslov index of $D$, given by Lipshitz's formula \cite[Corollary 4.3]{LipshitzCyl}:
\begin {equation}
\label {eq:lipshitz}
 I(D) = \sum_{x \in \x} n_x(D) + \sum_{y \in \y} n_y(D),
 \end {equation}
 where $n_p(D)$ denotes the average multiplicity of $D$ in the four
 quadrants around the point $p$. (Lipshitz's formula in the reference
 has an extra term, the Euler measure of $D$, but this is zero in our
 case because $D$ can be decomposed into rectangles.)

For example, consider the snail-like domains from Definition~\ref{def:snail}, as shown in Figure~\ref{fig:ULeft}. We turn them into enhanced domains by adding the respective marking ($L$ or
$R$) as well as choosing the real multiplicity at the destabilization
point to be zero. Then all those domains have index zero, regardless
of how many (non-destabilized) $O$'s they contain and with what
multiplicities.

\begin {lemma}
  Suppose that $(D, \epsilon, \rho)$ is the enhanced domain associated
  to a homotopy class $\phi$ of $(k+2)$-gons as in
  Section~\ref{sec:uleft}. Then the Maslov indices match: $\mu(\phi) = I(D,
  \epsilon, \rho)$.
\end {lemma}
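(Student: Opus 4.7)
The plan is to invoke a polygon generalization of Lipshitz's Maslov-index formula (cf.\ Sarkar) for homotopy classes of $(k+2)$-gons in $\mathrm{Sym}^n(\TT)$, which writes $\mu(\phi)$ as an additive combination of the Euler measure $e(D(\phi))$ and the point measures at the vertices $\x$, $\y$, and $\Theta_i:=\Theta^{\zed^{i-1},\zed^{i}}$ for $i=1,\dots,k$. The goal is then to re-express each term in this formula using only the shadow $D$ and the enhancement $(\epsilon,\rho)$, and to match the result with $I(D,\epsilon,\rho)=I(D)-\sum_j(\epsilon_j+2f_j)$.

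The first step is to decompose $D(\phi)$ into large regions and small regions. Each large region lies in a unique grid square of $G$ and by definition of the shadow, its multiplicity in $D(\phi)$ equals the multiplicity of $D$ on that square. The small regions come in two flavors: thin bigons sitting between a pair of parallel curves $\beta^{\zed^{i-1}}$ and $\beta^{\zed^i}$ that are isotopic (the ``non-destabilizing'' edges of the hyperbox), and the handful of regions immediately surrounding each destabilization point $p_j$, where a $\beta$ curve has been handleslid to encircle $O_{i_j}$. The multiplicities on all of these small regions are determined by boundary consistency with $D$ together with the enhancement data: $\rho_j$ pins down the multiplicity of $D(\phi)$ at $O_{i_j}$ (and thus, via boundary, on the adjacent small regions), while $\epsilon_j$ records which of the two intersection points at $p_j$ lies in $\y$.

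From here, the Maslov-index formula splits cleanly. The Euler measure $e(D(\phi))$ is a sum over regions: large regions contribute zero since they are rectangles, so only the small regions matter, each contributing multiplicity times $1/2$. Each point measure $n_{\ast}(D(\phi))$ likewise splits into a ``global'' part coming from the coordinates of the generator that lie away from the destabilization points (these reassemble precisely into $n_{\ast}(D)$ on the shadow, where $\ast\in\x\cup\y$) and a ``local'' part supported near each $p_j$. Comparing with Lipshitz's formula \eqref{eq:lipshitz}, $I(D)=\sum_{x\in\x}n_x(D)+\sum_{y\in\y}n_y(D)$, the global parts sum to $I(D)$, and it remains to verify that, for each $j$, the local contributions of $n_{\y}$, $n_{\Theta_i}$ and the Euler measures of small regions near $p_j$ add up to exactly $-(\epsilon_j+2f_j)$.

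The main obstacle is this local verification at each destabilization point $p_j$. It requires drawing the finitely many small regions surrounding $p_j$, tabulating their multiplicities as affine functions of $\rho_j$, and computing the induced Euler-measure and point-measure contributions; this is a short but delicate case analysis indexed by $\epsilon_j\in\{0,1\}$ (the dependence on the sign of $f_j$ drops out since one is computing a locally linear quantity in the multiplicities). A useful sanity check is that for the snail-like domains of Figure~\ref{fig:ULeft} one has $I(D,\epsilon,\rho)=0$, consistent with the fact that the triangle classes counted by the destabilization map \eqref{eq:destab-chain-map} have $\mu(\phi)=1-k=0$.
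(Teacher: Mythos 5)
Your plan is genuinely different from the paper's proof, but as it stands it has real gaps, and the step you defer is exactly the mathematical content of the lemma. First, your claim that the multiplicities of $D(\phi)$ on the small regions are ``determined by boundary consistency with $D$ together with the enhancement data'' is not true: the enhanced domain does not determine the homotopy class. For example, adding to $D(\phi)$ the thin annulus between $\beta_j^{\zed^{i-1}}$ and $\beta_j^{\zed^{i}}$ in a column away from the destabilization points changes the class (and the multiplicities on small regions) while leaving $\sh(\phi)$, $\epsilon$ and $\rho$ untouched; so your computation must also show that all such unconstrained multiplicities contribute zero to the index, a cancellation you never address. Second, the non-shadow contributions are not localized near the points $p_j$: each vertex $\Theta^{\zed^{i-1},\zed^{i}}$ is an $n$-tuple with one coordinate on the pair of isotopic beta translates in \emph{every} column, and the thin small regions run along entire columns, so both the $\Theta$-point measures and the Euler measures of small regions are spread over the whole grid. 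In particular the naive extension of \eqref{eq:lipshitz}, ``$\mu = e + {}$point measures at the vertices,'' cannot be correct as stated: already for the trivial class of Figure~\ref{fig:local} (where $D=0$, $\epsilon=\rho=0$ and $\mu(\phi)=0$) the Euler measure plus the point measures of the $n$ small $(k+2)$-gons is $n$ for $k=1$ and grows with $k$, so whichever polygon index formula you invoke carries correction terms and column-by-column contributions that your split ``global part${}=I(D)$, local part at $p_j={}-(\epsilon_j+2f_j)$'' does not account for. Finally, the local case analysis at each $p_j$, which you yourself flag as the main obstacle, is not carried out; together with the global cancellations above, that is the whole proof in your approach.

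For comparison, the paper's argument is much softer and avoids any polygon index formula: by \eqref{eq:enhanced-index}, $I(D,\epsilon,\rho)=M(\x)-M(\tilde\y)$ is a difference of Maslov gradings, so $I$ and $\mu$ change in exactly the same way under pre- or post-composition with rectangles; hence $\mu(\phi)-I(D,\epsilon,\rho)$ is a constant $C$ depending only on the grid. Evaluating on the trivial enhanced domain, whose associated class is a disjoint union of $n$ small $(k+2)$-gons with $\mu=0$ (Figure~\ref{fig:local}), gives $C=0$. If you want to keep a computational proof along your lines, you would need to state the polygon Maslov formula precisely (with its correction terms), prove independence of the unconstrained small-region multiplicities, and then perform the local analysis at each destabilization point; none of these steps is present in the proposal.
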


\begin {proof}
  Note that $I(D,\epsilon, \rho)$ is the difference in Maslov grading 
  between $\x$ (in the original grid diagram) and $\tilde \y =
  U_1^{N_1} \cdots U_n^{N_n} \tU_1^{\tN_1} \dots \tU_{n-q}^{\tN_{n-q}} \cdot (\y, \epsilon)$ (in the destabilized
  diagram), cf.\ Equation~\eqref{eq:enhanced-index}. Because
  $\mu(\phi)$ is additive with respect to pre- or post-composition
  with rectangles, we must have $\mu(\phi) = I(D, \epsilon, \rho)+C$,
  where $C$ is a constant that only depends on the grid. To compute
  $C$, consider the trivial enhanced domain with $D=0, \epsilon = (0,
  \dots, 0), \rho =(0, \dots, 0)$. This is associated to a class
  $\phi$ whose support is a disjoint union of $n$ polygons, all of
  whom are $(k+2)$-gons with three acute angles and $k-1$ obtuse
  angles. See Figure~\ref{fig:local} for an example of a quadrilateral
  of this type. It is easy to check that $\mu(\phi)=0$, which implies
  $C=0$.
\end {proof}

Given an enhanced domain $(D, \epsilon, \rho)$, we denote by $a_j,
b_j, c_j, d_j$ the multiplicities of $D$ in the four squares around
$p_j$, as in Figure~\ref{fig:abcd}. In particular, $b_j = t_j$ is the
total multiplicity there. Note that, if $p_j \not \in \x$, then
\begin{align}
\label {eq:abcd1}
 a_j + d_j &= b_j + c_j +1,\\
\shortintertext{while if $p_j \in \x$ then}
a_j + d_j &= b_j + c_j.\label {eq:abcd0}
\end{align}

\begin{figure}
\begin{center}
\input{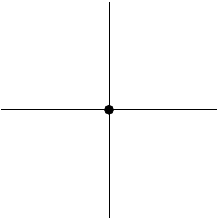_t}
\end{center}
\caption {{\bf Local multiplicities around a destabilization point.}
The marked point in the middle is the destabilization point ($p_j$ for left destabilization and $q_j$ for right destabilization). 
}
\label{fig:abcd}
\end{figure}

\begin{definition}
  \label{def:PositiveEnhancedDomain}
  We say that the enhanced domain $(D, \epsilon, \rho)$ is {\em
    positive} if $D$ has nonnegative multiplicities everywhere in the
  grid, and, furthermore, for every $j \in \{1, \dots, k \}$, we have
\begin {equation}
\label {ineq:abcd1}
\begin{aligned}
a_j &\geq f_j,&b_j &\geq f_j,\\
c_j &\geq f_j + \epsilon_j -1,\qquad &d_j &\geq f_j + 
\epsilon_j.
\end{aligned}
\end {equation}
\end{definition}

Observe that the second of the above inequalities implies $b_j - f_j = \rho_j 
\geq 0$. Thus, in a positive enhanced domain, all real multiplicities $\rho_j$ are nonnegative.

\begin {lemma}
\label{lemma:Der}
  Suppose that $(D, \epsilon, \rho)$ is the enhanced domain associated
  to a homotopy class $\phi$ of $(k+2)$-gons as in
  Section~\ref{sec:uleft}, and that the homotopy class $\phi$ admits
  at least one pseudo-holomorphic representative, for some (nearly symmetric) almost complex structure. Then $(D, \epsilon, \rho)$
  is positive, in the sense of Definition~\ref{def:PositiveEnhancedDomain}.
\end {lemma}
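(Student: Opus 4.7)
The plan is to verify the two requirements of Definition~\ref{def:PositiveEnhancedDomain} in turn. For nonnegativity of $D$: since $\phi$ admits a holomorphic representative, its domain $D'=D(\phi)$ has all region-multiplicities $\geq 0$. The shadow $D = \sh(D')$ sends each large region to its asterisk-containing unit square (with the same multiplicity) and each small region to $\emptyset$, so $D$ is a nonnegative linear combination of unit squares, and its multiplicities are all nonnegative.

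For the local inequalities near each destabilization point $p_j$, I will set up local coordinates and exploit the positivity of $D'$ together with standard corner relations for holomorphic polygons. Near $p_j$, the circle $\beta^{Z_j}$ (where $Z_j=O_{i_j}$) crosses the $\alpha$ just below $Z_j$ at points $L$ and $R$ (left and right of $p_j$) and the $\beta$ just left of $Z_j$ at points $\theta^+$ and $\theta^-$ (above and below $p_j$), partitioning a neighborhood of $p_j$ into four inside-circle quadrants with $D'$-multiplicities $\alpha_j, \rho_j, \gamma_j, \delta_j$ (upper-left, upper-right containing $Z_j$, lower-left, lower-right), plus four outside regions whose $D'$-multiplicities agree with the shadow values $a_j, b_j, c_j, d_j$. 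Positivity of $D'$ gives all eight of these $\geq 0$; in particular $\rho_j \geq 0$ yields $b_j \geq b_j - \rho_j = f_j$, which is the second of the four required inequalities.

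To produce the remaining three inequalities, I will apply the standard corner relation: at an intersection of two curves, the four local multiplicities $(m_{NW}, m_{NE}, m_{SE}, m_{SW})$ of $\phi$ satisfy $m_{NW} + m_{SE} - m_{NE} - m_{SW} = 0$ away from corners and $\pm 1$ at a convex corner (with sign determined by the diagonal occupied by the polygon). Among the four intersections on the circle $\beta^{Z_j}$ near $p_j$, the point $\theta^+$ carries a $\Theta$-type corner of $\phi$ (the maximal-degree intersection for the transition adding $Z_j$), $\theta^-$ is not a corner, and exactly one of $L, R$ is a $\y$-corner of $\phi$ (at $L$ if $\epsilon_j=0$, at $R$ if $\epsilon_j=1$). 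The relevant conventions force the polygon quadrant at $\theta^+$ and at the $\y$-corner to lie inside the circle on a single side of $\beta$ --- the $\alpha_j$ region throughout for L-type ($\epsilon_j=0$) or the $\rho_j$ region throughout for R-type ($\epsilon_j=1$). Chaining the four relations then yields $a_j - \alpha_j = d_j - \delta_j = c_j - \gamma_j = f_j + 1$ for R-type, and $a_j - \alpha_j = f_j - 1,\ c_j - \gamma_j = d_j - \delta_j = f_j$ for L-type. Combined with $\alpha_j, \gamma_j, \delta_j \geq 0$ and the convex-corner constraint forcing $\alpha_j \geq 1$ (L-type) or $\rho_j \geq 1$ (R-type), the inequalities~\eqref{ineq:abcd1} drop out immediately.

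The hardest part of the argument is the careful bookkeeping of corner conventions: specifically, identifying the polygon's quadrant at $\theta^+$, which is dictated by the maximal-degree identification of $\Theta^{\zed^\eps, \zed^{\eps'}}$ in $\Tb \cap \Tb^{\zed}$, and at the $\y$ corner at $L$ or $R$. Once these are pinned down the rest is mechanical. For general $k \geq 2$, the local picture near each $p_j$ reduces to the $k=1$ case: only one transition along the $\Theta$-corner chain of $\phi$ involves the $Z_j$-circle, while the remaining transitions correspond to intersections of isotoped copies of $\beta$ that can be arranged to lie away from $p_j$, so the same chain of corner relations delivers the inequalities unchanged.
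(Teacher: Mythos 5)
Your overall strategy is the same as the paper's: deduce nonnegativity of the shadow and $\rho_j\geq 0$ from positivity of the holomorphic representative, and then chain the alternating-sum (corner) relations at the intersection points near each destabilization point to compare the outer multiplicities $a_j,c_j,d_j$ with the inside-circle multiplicities. However, the execution of the corner bookkeeping is wrong, and it matters. Your chained identities for the $R$ case, $a_j-\alpha_j=c_j-\gamma_j=d_j-\delta_j=f_j+1$, together with nonnegativity of the inside multiplicities, would force $a_j\geq f_j+1$ and $c_j\geq f_j+1$; and you additionally claim the corner conventions force $\rho_j\geq 1$ in the $R$ case. Both statements are false: the $R$-type snail-like domains of Figure~\ref{fig:ULeft}, taken with real multiplicity zero at the destabilization point (the simplest being the single square directly below the destabilized $O$, with $a_j=b_j=c_j=\rho_j=f_j=0$ and $d_j=1$), are positive of index zero, hence by Proposition~\ref{prop:oneO} and Lemma~\ref{lemma:snail} they are exactly the classes with (one) holomorphic representative. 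So a correct chain of corner relations cannot yield inequalities stronger than \eqref{ineq:abcd1}, and no corner forces $\rho_j\geq 1$ or $\alpha_j\geq 1$. Your $L$-case identities are likewise off: the placement of the $\pm 1$ discrepancies coming from the $\Theta$-corner and the $\y$-corner is not the one you assert (the trivial $L$-type class, whose shadow is zero, has its local support in a single inside quadrant, which is incompatible with $a_j-\alpha_j=f_j-1$ as stated, given the conventions fixed by the construction and visible in Figure~\ref{fig:local}).

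The correct outcome of summing the local relations, as in the paper's proof, is that the slack sits entirely in the inside multiplicities: with $u_j,\rho_j,v_j,w_j$ the inside-circle multiplicities one gets $a_j-u_j=b_j-\rho_j=f_j$, $c_j-v_j=f_j+\epsilon_j-1$, and $d_j-w_j=f_j+\epsilon_j$, so that nonnegativity of $u_j,v_j,w_j$ (and of $\rho_j$) gives \eqref{ineq:abcd1} exactly, with no auxiliary ``corner forces multiplicity $\geq 1$'' claims needed. A smaller but real gap is your last paragraph: for $k\geq 2$ the local picture at $Z_j$ contains several nested circles around $Z_j$ (e.g.\ the $\beta^{\{Z_1\}}$ and $\beta^{\{Z_1,Z_2\}}$ circles in Figure~\ref{fig:local}), whose mutual intersection points carry $\Theta$-corners and cannot be isotoped away from $p_j$; so the reduction to the $k=1$ local analysis is not automatic, and one must redo the summation of corner relations in the finer picture (the paper does this explicitly for $k=2$ and indicates the general case is analogous). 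To repair your argument you would need to redo the corner chase with the correct identification of which quadrant the polygon occupies at the $\Theta$- and $\y$-corners, arriving at the identities above rather than the ones you wrote.
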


\begin {proof}
  If a homotopy class $\phi$ has at least one holomorphic
  representative, then it has positive multiplicities (for suitable choices
  of almost-complex structure on
  the symmetric product, see for example Lemma~3.2 of~\cite{HolDisk}).
  It follows now that $D$ has nonnegative multiplicities everywhere in
  the grid and that $\rho_j = b_j - f_j \geq 0$. It remains to check
  the three other relations.

  For concreteness, let us consider the case $k=2$, with $\epsilon_j =
  0$, cf.\ Figure~\ref{fig:local}. The two circles encircling the destabilization point in the
  middle are of the type $\beta^{\{Z_1\}}$ (the right one) and
  $\beta^{\{Z_1,Z_2\}}$ (the left one).

\begin{figure}
\begin{center}
\input{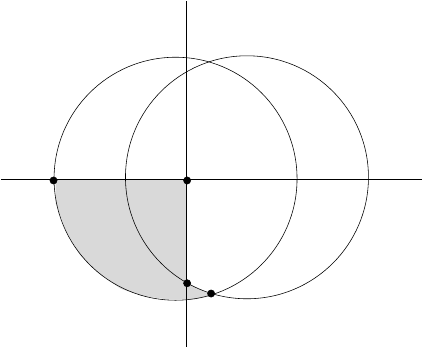_t}
\end{center}
\caption {{\bf Local multiplicities in detail.}  This is a more
  detailed version of Figure~\ref{fig:abcd}, in which we show two
  other types of beta curves: $\beta^{\{Z_1\}}$ and
  $\beta^{\{Z_1, Z_2\}}$. We mark the local multiplicity in each region. The shaded region is the domain of a quadrilateral of index zero.}
\label{fig:local}
\end{figure}

If we are given the domain of a holomorphic quadrilateral (for
instance, the shaded one in Figure~\ref{fig:local}), at most
of the intersection points the alternating sum of nearby multiplicities is
zero.  The only exceptions are the four bulleted points in the figure,
which correspond to the vertices of the quadrilateral, and where the
alternating sum of nearby multiplicities is $\pm 1$.  
(At the central intersection between an alpha and an original beta curve, there may not be a vertex and the alternating sum of the multiplicities may be~$0$.)
If $u_j$, $v_j$, and $w_j$ are the multiplicities in the regions indicated in
Figure~\ref{fig:local}, by adding up
suitable such relations between local multiplicities, we obtain
\begin{alignat*}{3}
a_j-f_j      &= a_j + \rho_j - b_j    &&= u_j &&\!\geq 0\\
c_j -f_j + 1 &= c_j +\rho_j - b_j + 1 &&= v_j &&\!\geq 0\\
d_j -f_j     &= d_j + \rho_j - b_j    &&= w_j &&\!\geq 0,
\end{alignat*}
as desired. The proof in the $\epsilon_j = 1$ case, or for other values of $k$, is similar.  
\end {proof}

\begin{definition}
\label{def:EquivEnhanced}
We say that two enhanced domains $E=(D, \epsilon, \rho)$ and $E'=(D', \epsilon', \rho')$ are {\em equivalent} if $\epsilon=\epsilon'$, $\rho=\rho'$, and the difference $D-D'$ is a linear combination of periodic domains of the following three types:
 \begin {enumerate}[(a)]
\item A column minus the row containing the same free $O_i$;
\item The sum of the columns supporting a link component $L_i \subseteq L$, minus the sum of the rows supporting the same $L_i$;
\item A column containing one of the $O_{i_j}$'s used for destabilization.
\end {enumerate}
\end{definition}

\begin{remark}
Adding a periodic domain of type (c) (a column containing some  $O_{i_j}$) makes the total
multiplicity $t_j$ (and hence also the fake multiplicity $f_j$) increase by $1$. The real multiplicity $\rho_j$ does not change.
\end{remark}

\begin{lemma}
\label{lem:EquivEnhanced}
The equivalence class of an enhanced domain $E=(D, \epsilon, \rho)$ is determined by
its initial point~$\x$ and final point $\tilde \y = U_1^{N_1} \cdots
U_n^{N_n} \tU_1^{\tN_1} \dots \tU_{n-q}^{\tN_{n-q}}\cdot (\y, \epsilon)$.
\end{lemma}

\begin{proof} Clearly, $\x$ and $\tilde \y$ do not change if we add periodic domains to $E$ as in Definition~\ref{def:EquivEnhanced}. 

Conversely, we need to show that if two enhanced domains $E=(D, \epsilon, \rho)$ and $E'=(D', \epsilon', \rho')$ go between the same $\x$ and $\tilde \y$, then they are equivalent. Since $\tilde \y$ is the same, we have $\epsilon=\epsilon'$ and the values of $N_1, \dots, N_n, \tN_1, \dots, \tN_{n-q}$ are the same for $E$ and $E'$. In particular, we have 
$$ \rho_j = N_{i_j} = \rho'_j,$$ 
so $\rho=\rho'$. It remains to study the difference $$\Delta:=D-D'.$$ Because $D$ and $D'$ go between the same $\x$ and $\y$, we see that $\Delta$ is a domain on the grid whose boundary is a linear combination of alpha and beta curves; that is, $\Delta$ is a linear combination of rows and columns. Furthermore, the multiplicities of $\Delta$ have to be zero at all $X$'s, and zero at all $O$'s that were not used for destabilization.

We claim that $\Delta$ is a linear combination of the kinds of periodic domains listed in Definition~\ref{def:EquivEnhanced}. After subtracting from $\Delta$ a suitable number of columns through each destabilization point $O_{i_j}$ (which are type (c) periodic domains), we can assume that the multiplicity of the new $\Delta$ at all $O_{i_j}$, and hence at all $O$ and $X$ markings, is zero. Thus, when writing the new $\Delta$ as a linear combination of rows and columns, the coefficients of the row and the column through the same marking must be equal. It follows that $\Delta$ is a linear combination of periodic domains of types (a) and (b), as desired.
\end{proof}

\begin{definition}
  \label{def:PositivePair}
We say that the pair $[\x, \tilde \y]$ is {\em positive} if there
exists a positive enhanced domain with $\x$ and $\tilde \y$ as its
initial and final points.
\end{definition}

\subsection {Positive domains of negative index}

Figure~\ref{fig:ULeft2} shows some examples of positive enhanced
domains of negative index, in a twice destabilized grid. We see that
the index can get as negative as we want, even if we fix the number of
destabilizations at two (but allow the size of the grid to change).

\begin{figure}
\begin{center}
\input{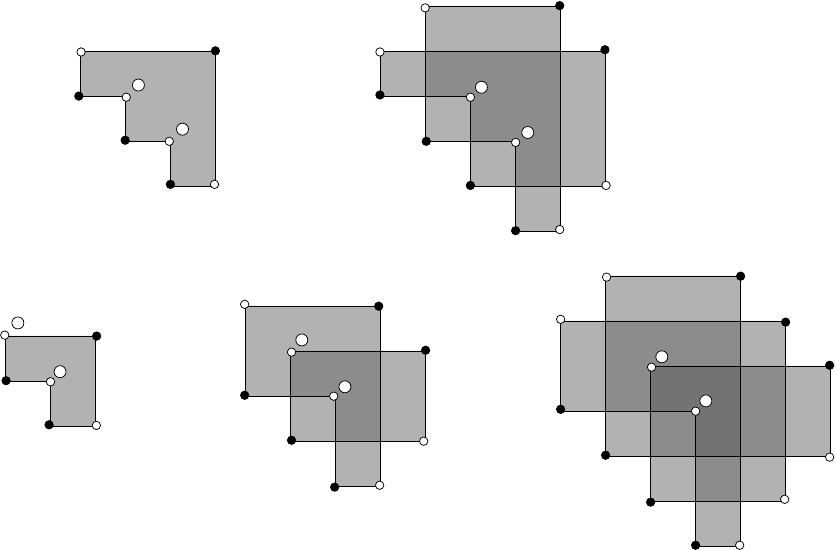_t}
\end{center}
\caption {{\bf Positive domains of negative index.} We destabilize at
  two $O$'s, marked by the two larger circles in the picture. The top
  row shows two positive domains of type $LL$, the first of index
  $-1$, the second of index $-2$. The second row shows three positive
  domains of type $RL$, of indices $-1, -2$ and $-3$,
  respectively. Darker shading corresponds to higher local
  multiplicities.  In each case, the real multiplicity $\rho_j$ is zero.}
\label{fig:ULeft2}
\end{figure}

However, this phenomenon is impossible for one destabilization:
\begin{proposition}
\label{prop:oneO} 
Let $G$ be a toroidal grid diagram with only one $O$ marked for
destabilization. Then any positive enhanced domain has nonnegative
index. Furthermore, if the enhanced domain is positive and has index
zero, then it is snail-like, in the sense of Definition~\ref{def:snail}.
\end{proposition}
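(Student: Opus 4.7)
The plan is to reduce nonnegativity of the index to a combinatorial bound and then classify the equality case by an inductive peeling argument. Writing $I(D, \epsilon, \rho) = I(D) - \epsilon - 2f$ with $f = b - \rho$, the statement reduces to the inequality $I(D) \geq 2f + \epsilon$, with equality characterizing the snail-like domains. Since $p \in \y$, Lipshitz's formula \eqref{eq:lipshitz} contributes $n_p(D) = (a+b+c+d)/4$ to $I(D)$, and the positivity inequalities \eqref{ineq:abcd1} give partial lower bounds on this contribution; however, the essential argument requires tracking further contributions from the other intersection points of $\x$ and $\y$ along $\partial D$.

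The main step is an induction on the total multiplicity $\sum_i O_i(D)$. Given a positive enhanced domain with $I(D) > 2f + \epsilon$, I would identify a unique snail-shaped layer of $D$---an L-shaped or rectangular region abutting $p$---that can be subtracted while preserving positivity. Each such peel decreases $I(D)$ by exactly $2$ and either decreases $f$ by $1$ (an interior peel) or flips $\epsilon$ from $1$ to $0$ (a final peel in the $R$ case). Iterating, the procedure terminates either at the empty domain (with $\epsilon = 0, f = 0$, the trivial $L$-case) or at the single square directly below $O$ (with $\epsilon = 1, f = 0$, the simplest $R$-case), both of which have enhanced index zero. This yields $I(D) \geq 2f + \epsilon$, proving nonnegativity.

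For the classification of index-zero domains, each peeling step must preserve the equality $I(D) = 2f + \epsilon$, forcing the intermediate domain to also be positive and of enhanced index zero. Tracing the induction backward builds the domain layer-by-layer, and uniqueness of the peel at each stage forces the spiral pattern shown in Figure~\ref{fig:ULeft}: one recovers exactly the two infinite sequences depicted there, one of $L$-type (starting from the trivial domain) and one of $R$-type (starting from the single-square domain).

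The main obstacle is establishing existence and uniqueness of the snail layer to be peeled at each step. One must show that the positivity conditions \eqref{ineq:abcd1} at $p$, together with the global constraints on $\partial D$ on the torus, canonically determine a snail-shaped region abutting $p$ whose removal preserves positivity---in the strong sense that the adjusted local multiplicities near $p$ continue to satisfy \eqref{ineq:abcd1} with $f$ replaced by $f - 1$ (or with $\epsilon$ flipped). This parallels the classification of holomorphic disks giving the destabilization chain map \eqref{eq:destab-chain-map}, here adapted to the enhanced-domain formalism.
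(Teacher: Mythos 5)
Your reduction to the inequality $I(D) \geq 2f + \epsilon$ is the right starting point, but the heart of your argument --- the existence and uniqueness of a positivity-preserving ``snail layer'' that can be peeled off an \emph{arbitrary} positive enhanced domain --- is precisely the content of the proposition, and you leave it unproven (you flag it yourself as the ``main obstacle''). Nothing in the positivity conditions~\eqref{ineq:abcd1} forces a general positive domain to contain a distinguished L-shaped or rectangular region abutting $p$: the domain may contain entire rows or columns (periodic-domain-type pieces) that never interact with $p$ in the snail pattern, it may have large multiplicities spread over the torus, and subtracting a candidate layer changes the initial generator $\x$, so one must also check that the peeled region is itself a legitimate domain between generators and that positivity at $p$ survives with $f$ replaced by $f-1$ (or $\epsilon$ flipped). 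In addition, your induction is set up on domains satisfying the \emph{strict} inequality $I(D) > 2f+\epsilon$, which cannot by itself establish the non-strict bound for all positive domains; and the claim that every peel drops $I(D)$ by exactly $2$ is asserted, not derived. As written, the argument is circular: the layer-by-layer decomposition you invoke is what the classification statement asserts.

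For comparison, the paper's proof avoids any decomposition. After the harmless reduction that every row and column of $D$ contains a square of multiplicity zero (subtracting rows or columns only helps, with $\rho_1$ adjusted when the row or column abuts $p_1$), it counts the ``special'' rows and columns where $\x$ and $\y$ differ, shows $m \geq 2d_1 \geq 2(f_1+\epsilon_1)$ by walking along the column of the destabilized $O$ from the square below it to a square of multiplicity zero, and then feeds this into Lipshitz's formula~\eqref{eq:lipshitz}: the corner $p_1$ contributes at least $f_1 + \tfrac{2\epsilon_1-1}{4}$ and the remaining $2m-1$ (or $2m$) corners contribute at least $\tfrac14$ each, giving $I(D,\epsilon,\rho) \geq \tfrac{\epsilon_1 - 1}{2}$ and hence $\geq 0$ by integrality; the equality case forces every corner other than $p_1$ to have average multiplicity $\tfrac14$, from which the snail shape follows. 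If you want to salvage your peeling strategy, you would first need to prove a structural lemma supplying the peel for every positive domain (including those containing full rows or columns), which is likely to require an index or multiplicity count of essentially the same kind as the paper's direct argument.
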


\begin {proof}
  Let us first consider a positive enhanced domain $(D, \epsilon,
  \rho)$ going between the generators $\x$ and $\tilde \y = U_1^{N_1}
  \cdots U_n^{N_n}\tU_1^{\tN_1} \dots \tU_{n-q}^{\tN_{n-q}} \cdot (\y, \epsilon)$. With the notations of
  Section~\ref{sec:uleft}, we have $k=1$, $\epsilon_1 \in \{0,1\}$
  (corresponding to the type of the domain, $L$ or $R$), $a_1$, $b_1 =
  t_1$, $c_1$, $d_1$ are the local multiplicities around the
  destabilization point, and $\rho_1$ and $f_1 = t_1 - \rho_1$ are the
  real and fake multiplicities there. Note that $t_1 \geq f_1$.

  Without loss of generality we can assume that every row or column
  contains at least one square where the multiplicity of $D$ is zero;
  otherwise we could subtract that row or column and obtain
  another positive domain, whose index is no larger. Indeed, if the row or column does not contain the destabilization point $p_1$ on its boundary, then subtracting it decreases the index by $2$. If we subtract the row just to the left of $p_1$, or the column just below $p_1$, by simultaneously increasing $\rho_1$ by one we can preserve the positivity conditions and leave the index the same. If we subtract the row or column through the $O$ marked for destabilization, while leaving $\rho$ unchanged, the positivity and the index are again unaffected.

  We say that an alpha or a beta curve is \emph{special} if $\x$ and $\y$ intersect
  it in different points. Let $m$ be the number of special alpha curves; it is
  the same as the number of special beta curves. 
  
  We claim that, as we move from a square to another across 
  an alpha or a beta curve, the multiplicity of the domain $D$ can
  only change by $\pm 1$, and it can do that only if the row (or
  column) is special. Indeed, if the curve is not special, then the difference $\delta$ in multiplicity between the two sides  is the same all along the curve. By our assumption, we have zeros in every row and column, and multiplicities are always nonnegative. By considering a square with multiplicity zero on one side of the curve, we get $\delta \leq 0$. By considering one on the other side, we get $\delta \geq 0$, and we conclude that $\delta$ can only be zero. If the curve is special, its intersections with $\x$ and $\y$ split it into two intervals; the difference in multiplicity of the two sides is some $\delta \in \Z$ along one interval, and $\delta+1$ along the other. By considering squares with zero multiplicity again, we find that $\delta \in \{0, -1\}$, and the claim is proved.
 
  Let us now look at the column containing the destabilized $O$. The
  domain has multiplicity $d_1$ in the spot right below  $O$, and
  it has multiplicity zero on some other spot on that column. We can
  move from $O$ to the multiplicity zero spot either by going up or
  down. As we go in either direction, we must encounter at least $d_1$
  special rows. This means that the total number of special rows is at
  least $2d_1$. Using the fourth inequality in~\eqref{ineq:abcd1}, we
  get
\begin {equation}
\label {eq:mt}
m \geq 2d_1 \geq 2(f_1+ \epsilon_1).
\end {equation} 

The ordinary index of the domain $D$ is given by Equation~\eqref{eq:lipshitz}, involving the sums of the average local
multiplicities of $D$ at the points of $\x$ and $\y$. One such point is the destabilization point $p_1$ which
is part of $\y$. Using the relations in Equation~\eqref{ineq:abcd1}, we find that
the average vertex multiplicity there is
\begin {equation}
\label {eq:f14}
\frac{a_1+b_1 + c_1 + d_1}{4} \geq f_1 + \frac{2\epsilon_1 - 1}{4}. 
\end {equation}

On the other hand, apart from the destabilization point, $\x$ and $\y$
together have either $2m-1$ (if $p_1 \not \in \x$) or $2m$ (if $p_1
\in \x$) corner vertices, where the average multiplicity has to be at
least $1/4$. Together with Equations~\eqref{eq:mt} and \eqref{eq:f14}, this
implies that
 \begin {equation}
\label {eq:idf}
I(D) \geq \frac{2m-1}{4} + f_1 + \frac{2\epsilon_1 - 1}{4}.
\end {equation} 

Using the formula for the index of an enhanced domain, together with 
Equations~\eqref{eq:mt} and  \eqref{eq:idf}, we obtain
\begin {equation}
\label {equation}
I(D, \epsilon, \rho) = I(D) - \epsilon_1 - 2f_1 \geq \frac{m}{2} - f_1 - \frac{\epsilon_1}{2} - \frac{1}{2} \geq \frac{\epsilon_1-1}{2}.  
\end {equation}

Since the index is an integer, we must have $I(D, \epsilon, \rho) \geq
0$. Equality happens only when $D$ has average vertex multiplicity
$1/4$ at all its corners other than the destabilization point. An easy
analysis shows that the domain must be snail-like.
\end {proof}

\subsection {Holomorphic triangles on grid diagrams}
\label {sec:triangles}

\begin{lemma}
\label{lemma:snail}
Fix a snail-like domain $D$ (as in Figure~\ref{fig:ULeft}) for
destabilization at one point.  Then the count of holomorphic triangles
in $\Sym^n(\TT)$ with $D$ as their shadow is one mod $2$.
\end{lemma}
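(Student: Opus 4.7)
The plan is to compute the triangle count directly, using Lipshitz's cylindrical reformulation together with a Riemann mapping argument and induction on the depth of the snail.

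First, I would localize the problem. Each snail-like domain $D$ is supported on a bounded neighborhood of the destabilization point, spanning only a few rows and columns. At each of the $n-1$ intersection points of the endpoint generators lying outside this support, any holomorphic triangle restricts to a constant map, contributing only a unit factor. Hence the count of holomorphic triangles in $\Sym^n(\TT)$ with shadow $D$ reduces to a count in a local model whose Heegaard surface contains just the finitely many $\alpha$, $\beta$, and $\beta^{\{Z\}}$ arcs meeting~$D$.

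Next, I would invoke Lipshitz's cylindrical formulation. A holomorphic triangle with shadow $D$ corresponds to a triple $(S, j_S, u)$ where $S$ is a Riemann surface with boundary and three punctures, and $u\colon S \to \TT \times T$ (for $T$ the model triangle) is a holomorphic map whose $\TT$-projection represents $D$ as a $2$-chain. The Euler measure and index formulas pin down the topological type of $S$: for a snail-like domain of index~$0$, the source consists of a single topological disk carrying the essential winding, together with constant pieces at the passive generator components. Branch points of the projection $S \to T$ occur precisely at the interior points of $D$ where the multiplicity jumps by more than one.

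The crucial step is then a Riemann mapping argument. A holomorphic map from $S$ with its prescribed boundary pattern and branching structure is equivalent, via uniformization, to a conformal identification between $S$ and the uniformization of $D$ as a branched disk with three marked boundary corners. Sending the three distinguished boundary punctures of $S$ to the three corners of $T$ removes all remaining conformal automorphisms, so modulo the reparametrization group there is exactly one holomorphic representative, giving the count $1 \pmod 2$. For the shallowest snails (the trivial $L$-type and the simplest $R$-type), this argument is essentially explicit, and the uniqueness of the representative is immediate.

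The main obstacle is the deeper snails, where $D$ wraps around rows or columns repeatedly and has arbitrarily large local multiplicities. Here I would argue by induction on the depth. Degenerate $\TT$ by stretching the neck along a horizontal or vertical curve separating the outermost wrapping from the rest of the snail, and apply the gluing theorem of \cite{LipshitzCyl} to identify holomorphic representatives in the stretched limit with pairs consisting of a shorter snail and a rectangle. Each factor contributes $1 \pmod 2$ (the shorter snail by induction, the rectangle by the empty rectangle count of \cite{MOS}), so the glued count is also $1\pmod 2$. The delicate point is ruling out boundary-degenerate limits in the stretching and arranging transversality for both smaller moduli spaces, which is achieved by choosing the separating curve to avoid all branch points and corners of $D$.
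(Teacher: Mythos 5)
Your strategy---a direct analytic computation via localization, the cylindrical reformulation, uniformization, and a neck-stretching induction---is genuinely different from the paper's argument, but the inductive step has a real gap. First, a horizontal or vertical circle on the grid torus $\TT$ is non-separating, so there is no such curve ``separating the outermost wrapping from the rest of the snail''; stretching along an essential circle degenerates the Heegaard torus itself (a stabilization-type limit in which the surface becomes nodal), which is far beyond a routine application of the gluing theorem of \cite{LipshitzCyl}. Second, the deeper snail-like domains of Figure~\ref{fig:ULeft} overlap themselves: the regions of multiplicity $\geq 2$ arise exactly because the outer wrap runs through the same rows and columns as the inner core, so the limit of a degenerating family is not simply ``a shorter snail and a rectangle,'' and this is precisely where the difficulty of the lemma lies. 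Relatedly, your claims that the source is a single disk plus constant components and that its branch points sit at the multiplicity jumps of $D$ are unjustified: branch points are moduli of the holomorphic representative, not data determined by the shadow. Finally, any deformation argument (neck stretching included) presupposes that the count in a fixed homotopy class is independent of the almost complex structure; for one destabilization point this holds only because of Proposition~\ref{prop:oneO} (there are no positive enhanced domains of negative index), an input your proposal never invokes, and your ``rule out boundary degenerations'' remark is where that argument would have to live.

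For comparison, the paper's proof does essentially no analysis beyond the trivial case: the trivial $L$-domain corresponds to a disjoint union of small triangles and has count one. The triangle-counting map is the chain map $C^-(G)\to \EC^-(G,\zed)$ of \eqref{eq:destab-chain-map}; by Proposition~\ref{prop:oneO} its matrix coefficients are supported on snail-like domains, and an induction on complexity, along the lines of \cite[Lemma 3.5]{MOST}, shows the chain-map equation forces every such coefficient to be $1 \pmod 2$. If you want to salvage a ``direct'' proof, the workable version of your induction is to analyze ends of one-dimensional moduli spaces of triangles whose shadow is a snail plus a rectangle---but that is exactly the chain-map rigidity argument in disguise, so you would be reproducing the paper's route rather than providing an independent analytic computation.
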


\begin {proof}
The trivial domain $D$ of type $L$ corresponds to a homotopy class $\phi$ whose support is a disjoint union of triangles with $90^\circ$ angles. Hence, the corresponding holomorphic count is one.

To establish the claim in general, note that the handleslide map
at one point (given by counting holomorphic triangles) is a chain map
(in fact, a quasi-isomorphism) from $C^-(G)$ to $\EC^-(G, \zed)$, compare
\eqref{eq:destab-chain-map}.  By Proposition~\ref{prop:oneO}, the only
counts involved in this map are the ones corresponding to snail-like
domains. If to each snail-like domain we assign the coefficient one
when counting it in the map, the result is the chain map
\eqref{eq:destab-chain-map}. 

We claim that, given our assignment for the trivial domain of type $L$, we do indeed need to assign one to
each snail-like domain in order for the result to be a chain map. This is proved by induction on the complexity of the domain, where complexity refers to the number of rows (or columns) in which the initial generator of the domain differs from the final generator. The base case is the trivial domain of type $L$, of complexity zero, which we know to give a count of one. 

For the inductive step, we consider domains for destabilization at one point and of index one. An example is shown in Figure~\ref{fig:StabCancel}. This domain can be decomposed as the juxtaposition $D * r$ of a type $L$ snail-like domain $D$ of index zero and complexity $3$, and a rectangle $r$. (Both $D$ and $r$ contain the marking used for destabilization; note that $r$ becomes an empty rectangle in the destabilized diagram.) The domain in Figure~\ref{fig:StabCancel} can also be decomposed along the dotted lines, as $D' * r'$, where $D'$ is a type $L$ snail-like domain of index zero and complexity $5$, and $r'$ is an empty rectangle. Further, these are the only two ways of decomposing the domain. Therefore, when we consider the (one-dimensional) moduli space of holomorphic representatives for $D * r= D' * r'$, its boundary consists of points coming from each decomposition. Suppose that, by the inductive hypothesis, the holomorphic count for $D$ is one (mod $2$). Rectangles also contribute counts of one. Since the boundary of a one-dimensional compact manifold has an even number of points, it follows that the holomorphic count for $D'$ is one as well (mod $2$). This proves the claim for the domain $D'$ of higher complexity. 

There are several other types of domains of index one to consider, e.g., some relating domains of type $L$ to those of type $R$. All possible cases are discussed in the proof of \cite[Lemma 3.5]{MOST}, where it was established that the differential given by counting snail-like domains squares to zero. The same cases can be used to establish the inductive step in our setting.
\end {proof}

\begin{figure}
\begin{center}
\input{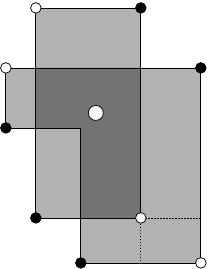_t}
\end{center}
\caption {{\bf Counting holomorphic representatives.} We show here a domain of index $1$, which has two decompositions into a snail-like domain of index $0$ and a rectangle.}
\label{fig:StabCancel}
\end{figure}

Proposition~\ref{prop:oneO} and Lemma~\ref{lemma:snail} imply that one
can count combinatorially (mod $2$) all the index zero holomorphic
triangles in a grid diagram (with one point marked for
destabilization) with fixed shadow. Indeed, if the shadow is a
snail-like domain, then the count is one, and otherwise it is zero.

\section {Formal complex structures and surgery}
\label {sec:comps}

Let $\orL \subset S^3$ be a link with framing $\Lambda$, and let $\ux$ be a  $\spc$ structure on the surgered manifold $S^3_{\Lambda}(L)$. Our goal in this section is
to explain a combinatorial procedure for calculating the ranks of the
groups $\HFm(S^3_{\Lambda}(L), \ux)$. The procedure will be based on
Theorem~\ref{thm:Surgery}.  The algorithm is made more
complicated because Proposition~\ref{prop:oneO} and therefore
Lemma~\ref{lemma:snail} are false if there is more than one
destabilization point; we therefore have to make an appropriate choice
of domains to count, a choice we call a ``formal complex structure''.

\subsection {The complex of positive pairs}
\label{sec:complex}
Let $G$ be a grid diagram (of size $n$) marked for destabilization at a collection $\zed$ of some
free $O$'s, with $| \zed|  = k< n$, and let $q$ be the total number of free markings, as in Section~\ref{sec:uleft}. In that section we defined the
(homotopy equivalent) curved complexes $\CFLc(G)$ and $\ECFLc(G, \zed)$. Let us consider
the Hom complex
$$ \Hom_{\tR} (\CFLc(G), \ECFLc(G, \zed)),$$ 
where $\tR=\ff[[U_1, \dots, U_n, \tU_1, \dots, \tU_{n-q}]]$. Since $\CFLc(G)$ is a free $\tR$-module, we can naturally identify it with its dual using the basis given by $\S(G)$. Thus, if we view the $\Hom$ complex as an $\ff$-vector space, its generators (in the sense of direct products) are pairs $[\x, \tilde \y]$, where $\tilde \y$ is an
enhanced generator possibly multiplied by some powers of $U$, i.e.,
$$\tilde \y = U_1^{N_1} \cdots U_n^{N_n} \tU_1^{\tN_1} \dots \tU_{n-q}^{\tN_{n-q}} \cdot (\y, \epsilon).$$

The homological degree of a generator in the Hom complex is $M(\tilde
\y) - M(\x)$. However, we would like to think of the generators
as being enhanced domains (up to the addition of periodic domains), so in
order to be consistent with the formula for the index of domains we
set
$$ I([\x, \tilde \y]) = M(\x) - M(\tilde \y)$$
and view the Hom complex as a cochain complex, with a differential $d$ that increases the grading. It has the structure of an $\tR$-module, where multiplication by a variable $U_i$ increases the grading by two: $U_i [\x, \tilde \y] = [\x, U_i \tilde \y]$. Also, multiplication by a variable $\tU_i$ preserves the grading. Note that the $\Hom$ complex is bounded from below with respect to the Maslov grading. 

There is a differential on the complex given by
$$ d [\x, \tilde \y] = [\del^* \x, \tilde \y] + [\x, \del \tilde \y].$$
Thus, taking the differential of a domain consists in summing over the ways of pre- and post-composing the domain with a rectangle. Note that $d^2=0$, i.e, the Hom complex is a true complex (not a curved one). Indeed, the differentials on both $\CFLc(G)$ and $\ECFLc(G, \zed)$ square to the same multiple of the identity, as in \eqref{eq:delcurved}. These multiples cancel each other out when we compute $d^2$ for the Hom complex.

If a pair $[\x, \tilde \y]$ is positive (as in
Definition~\ref{def:PositivePair}), then by definition it represents a
positive domain; adding a rectangle to it keeps it positive. Indeed,
note that if the rectangle crosses an $O$ used for destabilization,
then the real and total multiplicities increase by $1$, but the fake
multiplicity stays the same, so the inequalities \eqref{ineq:abcd1}
are still satisfied.

Therefore, the positive pairs $[\x, \tilde \y]$ generate a subcomplex
$\CP^*(G, \zed)$ of the Hom complex. For the moment, let us ignore its
structure as an $\Ring$-module, and simply consider it as a cochain
complex over $\ff$. We denote its cohomology by $\HP^*(G, \zed)$.

We make the following:
\begin {conjecture}
\label {conj}
Let $G$ be a toroidal grid diagram of size $n$ with a collection $\zed$ of free $O$'s marked for destabilization, such that $|\zed| < n$. Then $\HP^{d}(G, \zed) = 0$ for $d < 0$.
\end {conjecture}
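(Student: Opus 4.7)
The plan is to prove the conjecture by induction on $k = |\zed|$, with the inductive step handled via a filtration.

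For the base cases $k = 0$ and $k = 1$, the stronger statement $\CP^d(G, \zed) = 0$ for $d < 0$ already holds. When $k = 0$, an enhanced domain is just a positive domain $D$ between generators $\x, \y \in \S(G)$, and Lipshitz's formula~\eqref{eq:lipshitz} expresses $I(D)$ as a sum of averages of nonnegative multiplicities, so $I(D) \geq 0$. When $k = 1$, the vanishing is exactly Proposition~\ref{prop:oneO}.

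For $k \geq 2$, positive enhanced domains of negative index do exist (see Figure~\ref{fig:ULeft2}), so we must exhibit genuine null-homotopies rather than just argue that the complex itself vanishes. The plan here is to filter $\CP^*(G, \zed)$ by the quantity $\|\epsilon\| = \epsilon_1 + \dots + \epsilon_k$, which takes values in $\{0, 1, \ldots, k\}$. Since the differential on $\EC^-(G, \zed)$ splits into a rectangle-counting piece (preserving $\epsilon$) plus Koszul-type pieces $U_{i_j} - U_{i'_j}$ coming from the mapping-cone structure~\eqref{mapcone} (each decreasing some $\epsilon_j$ by one), the induced filtration on the Hom complex yields a spectral sequence whose $E_1$ page is governed by the rectangle-only differential. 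One then hopes to show that the $E_1$ cohomology vanishes in negative degrees, and that the higher differentials preserve this vanishing. Each $\epsilon$-graded piece should, via an identification along the lines of isomorphism~\eqref{eq:psiz}, be expressible in terms of Hom complexes built from the smaller destabilized grid $G^\zed$, which carries no destabilization data and is therefore accessible to the base case argument.

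The main obstacle is that the positivity condition~\eqref{ineq:abcd1} involves all destabilization points simultaneously and is non-local: any candidate null-homotopy—such as one that pre- or post-composes with a rectangle or snail-like piece centered at $Z_k$—may destroy the positivity inequalities around a different $Z_j$. This is exactly what prevents a direct extension of the Proposition~\ref{prop:oneO} argument to $k \geq 2$, and it is why one is ultimately forced (as in the paper's proof for sparse grids) to assume extra separation between destabilization points: sparseness causes the positivity constraints at different $Z_j$ to decouple enough to be treated one at a time. A proof of the conjecture in full generality would seem to require either a clever global argument—perhaps identifying $\CP^*(G, \zed)$ up to quasi-isomorphism with a transparent algebraic object built from the mapping-cone structure of $\EC^-(G, \zed)$ and the destabilization quasi-isomorphism~\eqref{eq:destab-chain-map}—or a more refined local-to-global analysis of positive enhanced domains that can track the interactions between multiple destabilization points.
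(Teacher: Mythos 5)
The statement you are attempting is left open in the paper: Conjecture~\ref{conj} is not proved there. The authors only observe that Proposition~\ref{prop:oneO} settles the case $|\zed|=1$ (where the complex itself vanishes in negative degrees, so there is nothing to compute), and for $k\ge 2$ they prove only the strictly weaker sparse-grid statement, Theorem~\ref{thm:sparse}, with the bound $d<\min\{0,2-k\}$, via a completely different mechanism: the filtration $\F$ of Equation~\eqref{eq:fex} (counting auxiliary $X$ markings and the real multiplicities $\rho_j$), a second filtration $\G$ built from $Y$ markings, and the inner/outer dichotomy of Lemmas~\ref{lemma:inner} and~\ref{lemma:outer}. So there is no proof in the paper for your proposal to be measured against, and your write-up correctly reproduces only what the paper itself already knows: the base cases $k=0,1$.

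For $k\ge 2$ your proposal is not a proof, and the specific plan has a concrete flaw beyond the obstacle you name. Filtering by $\|\epsilon\|$ cannot reduce the problem to the destabilization-free case: the positive enhanced domains of negative index in Figure~\ref{fig:ULeft2} each have a fixed marking type ($LL$, $RL$, etc.) and vanishing real multiplicities, so they survive unchanged into a single $\epsilon$-graded piece of your associated graded complex. The reason is the index formula~\eqref{eq:enhanced-index}, $I(D,\epsilon,\rho)=I(D)-\sum_j(\epsilon_j+2f_j)$: the fake-multiplicity terms $2f_j$ can drive the index arbitrarily negative while positivity~\eqref{ineq:abcd1} is maintained, and these terms are invisible to any identification of the graded pieces with $\Hom$ complexes over the smaller grid $G^\zed$. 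Hence the $E_1$ page of your spectral sequence genuinely has positive generators in negative degrees, and the ``base case argument'' (nonnegativity of $I(D)$ from Lipshitz's formula) simply does not apply to it; showing its cohomology vanishes in negative degrees is essentially the original problem again. (A minor point in the same direction: the Koszul maps $U_{i_j}-U_{i'_j}$ go from $L$ to $R$, i.e.\ they increase $\|\epsilon\|$, so the filtration direction in your setup needs care, though this is not the real issue.) In short, the conjecture remains open, your proposal does not close it, and even the paper's partial result requires the sparseness hypothesis precisely to make the interactions between destabilization points tractable, and still only reaches the threshold $2-k$ rather than $0$.
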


Proposition~\ref{prop:oneO} implies Conjecture~\ref{conj} in
the case when only one $O$ is marked for destabilization. Indeed, in
that case we have $\CP^d(G, \zed) = 0$ for $d < 0$, so the homology is
also zero.

In the case of several destabilization points, we can prove only a
weaker form of the conjecture, namely Theorem~\ref{thm:sparse}
below. However, this will suffice for our application.

\begin {definition}
  Let $G$ be a toroidal grid diagram of size $n > 1$, with a collection $\zed$ of some free $O$'s marked for
  destabilization. If none of the $O$'s marked for destabilization sit
  in adjacent rows or adjacent columns, we say that the pair $(G, \zed)$ is {\em
    sparse}. (Note that if $(G, \zed)$ is sparse, we must have $|\zed| \leq (n+1)/2 < n$.)
\end {definition}

Recall that in the Introduction we gave a similar definition, which applies to grid diagrams $G$ (with free markings) representing links in $S^3$. Precisely, we said that $G$ is {\em sparse} if none of the linked markings sit in adjacent rows or in adjacent columns. Observe that, if $G$ is sparse, then the pair $(G, \zed)$ is sparse for any collection $\zed$ of linked markings.

\begin {theorem}
\label {thm:sparse}
If $(G, \zed)$ is a sparse toroidal grid diagram with $k$ $O$'s marked for destabilization, then $\HP^d(G, \zed) = 0$ for $d < \min \{0, 2-k\}$.
\end {theorem}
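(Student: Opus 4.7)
The goal is to show that every positive enhanced domain on a sparse grid has index at least $\min\{0, 2-k\}$, which gives $\CP^d(G,\zed) = 0$ in those degrees and therefore $\HP^d(G,\zed) = 0$. The plan is to adapt the local index-counting argument from Proposition~\ref{prop:oneO}, with the sparseness hypothesis serving precisely to decouple the estimates at distinct destabilization points.

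\textbf{Step 1 (Normalization).} Given a positive enhanced domain $(D,\epsilon,\rho)$, I would first reduce modulo periodic domains so that every row and every column contains at least one square of multiplicity zero. Any all-positive row or column disjoint from the destabilization points can be subtracted, which decreases the index by $2$; the row immediately to the left of a destabilization point $p_j$, or the column immediately below it, can be subtracted while simultaneously increasing $\rho_j$ by one, preserving both positivity and the index; and the column through an $O_{i_j} \in \zed$ can be subtracted without affecting the index at all. This reduction never decreases the index, so it suffices to prove the bound in the normalized case.

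\textbf{Step 2 (Local counting).} For each destabilization point $p_j$, I would import the analysis from Proposition~\ref{prop:oneO} verbatim, applied locally to the row and column through $O_{i_j}$: moving from $p_j$ along its column up or down to the nearest zero-multiplicity square encounters at least $d_j \geq f_j + \epsilon_j$ special rows in each direction, and symmetrically for columns. The sparseness condition ensures that no two destabilization $O$'s lie in adjacent rows or columns, so these ``zero-paths'' can be chosen to traverse pairwise disjoint collections of rows (and of columns). Consequently the total number $m$ of special rows, equal to the total number of special columns, satisfies
$$ m \;\geq\; 2\sum_{j=1}^k (f_j + \epsilon_j). $$
Applying Lipshitz's formula \eqref{eq:lipshitz}, contributing $\geq 1/4$ at each ordinary corner and $\geq f_j + (2\epsilon_j-1)/4$ at each $p_j$, and then converting $I(D)$ into $I(D,\epsilon,\rho) = I(D) - \sum_j (\epsilon_j + 2f_j)$, one obtains after simplification a bound of the form
$$ I(D,\epsilon,\rho) \;\geq\; \sum_{j=1}^k \frac{\epsilon_j - 1}{2} + R_k, $$
where $R_k$ collects the boundary terms from corners away from the destabilization points. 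A careful rounding argument (using integrality of $I$) then yields $I(D,\epsilon,\rho) \geq 2-k$ for $k \geq 2$ and $I(D,\epsilon,\rho) \geq 0$ for $k = 1$, as desired.

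\textbf{The main obstacle} will be Step~2: rigorously justifying that the local estimates at different destabilization points genuinely add. In a non-sparse configuration, a single special row could serve two adjacent destabilization points at once, so the key inequality $m \geq 2\sum_j(f_j+\epsilon_j)$ fails, and it is this failure that allows the low-index domains of Figure~\ref{fig:ULeft2} to exist. The sparseness hypothesis is exactly the combinatorial device that forbids such sharing. A secondary technical difficulty is that the normalization in Step~1 must be performed so as to respect the inequalities \eqref{ineq:abcd1} at each destabilization point, which requires a careful case analysis when a subtracted row or column is itself adjacent to some $p_j$. Once Step~2 is established, Step~1 is routine and the vanishing $\HP^d(G,\zed) = 0$ for $d < \min\{0, 2-k\}$ follows immediately.
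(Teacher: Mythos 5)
Your proposal rests on the claim in Step~2 that positivity plus sparseness forces $I(D,\epsilon,\rho) \geq \min\{0,2-k\}$ for \emph{every} positive enhanced domain, so that the chain groups $\CP^d(G,\zed)$ themselves vanish for $d < \min\{0,2-k\}$. That claim is too strong, and the argument offered for it does not work. The supposed key inequality $m \geq 2\sum_j (f_j+\epsilon_j)$ does not follow from sparseness: a ``special'' row is a global feature of the pair (a row where $\x$ and $\y$ occupy different intersection points), and a single special row can absorb the multiplicity drops required along the columns through several different marked $O$'s at once, no matter how far apart those $O$'s are. Sparseness only forbids marked $O$'s in adjacent rows and columns; it does nothing to prevent one large domain from winding over two widely separated marked $O$'s and sharing its special rows between them. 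Indeed, Figure~\ref{fig:ULeft2} exhibits positive enhanced domains with $k=2$ of index $-1,-2,-3$ (and arbitrarily negative as the grid grows), and the mechanism producing them is fake multiplicity at the marked $O$'s, which is insensitive to whether the diagram is sparse. So low-index positive pairs do exist on sparse grids, the groups $\CP^d(G,\zed)$ do not vanish in the stated range, and the theorem genuinely asserts vanishing of \emph{cohomology}, not of chains.

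This is why the paper's proof is structured quite differently, and the difference is not optional. An index bound of the type you want is proved only for pairs $[\x,\tilde\y]$ whose initial generator $\x$ is \emph{inner} (each marked $O$ has a corner of its square occupied by $\x$): that is Lemma~\ref{lemma:inner}, which gives $I(\p)\geq 2-k$, and even there the estimate requires a case analysis at each marked $O$ plus a separate argument to rule out $I(\p)=-1$ when $k=2$. For \emph{outer} generators no index bound holds; instead one filters $\CP^*(G,\zed)$ by the count of auxiliary $X$-markings together with the real multiplicities (the filtration $\F$), refines by a second filtration $\G$ built from carefully weighted $Y$-markings, and shows (Lemma~\ref{lemma:outer}) that each component of the associated graded containing an outer generator is acyclic, because its generators come in pairs $[\x,\tilde\y]$, $[\x',\tilde\y]$ related by a rectangle in the column through a marked $O$ whose square misses $\x$ --- with a three-case positivity argument, sometimes after adding a periodic column, to see that the partner pair is again positive. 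Your proposal is missing this entire cancellation mechanism, which is the actual content of the theorem; Step~1 (the normalization by subtracting rows and columns) is fine and does appear inside the inner-generator estimate, but it cannot substitute for the outer-generator analysis.
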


The proof of Theorem~\ref{thm:sparse} will be given in Section~\ref{sec:sparse}.

\subsection {An extended complex of positive pairs}
\label {sec:ext}
Let us now return to the set-up of Sections~\ref{sec:DestabSeveral} and \ref{sec:shadows}, where we destabilize at linked markings. Thus, $\orL$ is an oriented link with a grid presentation $G$ of grid number $n$, and with $q \geq 1$ free markings. Let $\zed_0$ and $\zed$ be two disjoint sets of linked markings on $G$ such that $\zed_0 \cup \zed$ is consistent. 

There is a curved link Floer complex $\CFLc(\Hyper_G({\zed_0 \cup \zed}))$ over a base ring $\tR(\zed_0 \cup \zed)$, with one $U$ variable for each marking in $\Os^{\orL(\zed_0 \cup \zed)}$, and one $\tU$ variable for each marking in $\Xs^{\orL(\zed_0 \cup \zed)}$. There is also a curved link Floer complex $\CFLc(\Hyper_G(\zed_0))$, over a different base ring $\tR(\zed_0)$. We have a natural injective ring map
\begin{equation}
\label{eq:res} 
\tR(\zed_0 \cup \zed) \to \tR(\zed_0),
\end{equation}
defined as follows. Consider the $X$ markings on components of $\orL(\zed_0 \cup \zed) - \orL(\zed_0)$ that are oriented oppositely to the orientation in $\orL$, and thus become $O$'s in $\Os^{\orL(\zed_0 \cup \zed)}$. For those markings, we map the corresponding $U$ variables in $\tR(\zed_0 \cup \zed)$ into $\tU$ variables in $\tR(\zed_0)$. All the other $U$ and $\tU$ variables in $\tR(\zed_0 \cup \zed)$ are mapped to the same variables in $\tR(\zed_0)$.

By restriction of scalars using the map \eqref{eq:res}, we can turn $\CFLc(\Hyper_G(\zed_0))$ into a curved complex over $\tR(\zed_0 \cup \zed)$. In fact, we obtain the curved Floer complex $\CFLc(r_{\orL(\zed_0 \cup \zed) - \orL(\zed_0)}(\Hyper_G(\zed_0)))$, which we simply denote by $r_{\zed} \CFLc(\Hyper_G(\zed_0))$.

We can define a cochain complex
$$ \CP^*(G; \zed_0, \zed),$$
which is the subcomplex of 
\begin {equation}
\label {eq:Hom}
 \Hom_{\tR(\zed_0 \cup \zed)} \bigl(r_{\zed} \CFLc(\Hyper_G(\zed_0)), \CFLc(\Hyper_G({\zed_0 \cup \zed}))\bigr )
 \end {equation}
spanned by positive pairs. Here positivity of pairs has the same 
meaning as before: it is defined in terms of enhanced domains, by
looking at the grid diagram $G^{\zed_0}$ destabilized at the points of
$\zed$. 

Let $G^{\zed_0, \zed}$ be the reduction of the grid diagram $G^{\zed_0}$ at $\orL(\zed_0 \cup \zed) -\orL(\zed_0)$. (Alternatively, we can obtain $G^{\zed_0, \zed}$ from $G$ by reducing at $\orL(\zed_0 \cup \zed)$ and then deleting the rows and columns that contain markings in $\zed_0$.) 
Further, on $G^{\zed_0, \zed}$ we mark for destabilization the markings in $\zed$. Then $(G^{\zed_0, \zed}, \zed)$ is a grid diagram with some free $O$'s marked for destabilization, as in Sections~\ref{sec:uleft} and \ref{sec:complex}. As such, it has a complex of positive pairs $\CP^*(G^{\zed_0, \zed}, \zed)$, cf.\ Section~\ref{sec:complex}. This is defined over a base ring $\tR(\zed_0, \zed)$, with   one $U$ variable for each marking in $\Os^{\orL(\zed_0 \cup \zed)}$ that is not in $\zed_0$, and one $\tU$ variable for each marking in $\Xs^{\orL(\zed_0 \cup \zed)}$.

It is clear from the definitions that 
\begin {equation}
\label {eq:gze}
\CP^*(G; \zed_0, \zed) \cong \CP^*(G^{\zed_0, \zed}, \zed)[[\{U_Z\}_{Z \in \zed_0}]].
\end {equation}

Let $\zed_0, \zed, \zed'$ be disjoint sets of linked markings  on $G$, such that $\zed_0 \cup \zed \cup \zed'$ is consistent. There are natural composition maps
\begin {equation}
\label {eq:compose}
\circ : \CP^i(G; \zed_0, \zed) \otimes \CP^j(G; \zed_0 \cup \zed, \zed')  \to \CP^{i+j}(G; \zed_0, \zed \cup \zed'),
\end {equation}
obtained from the respective $\Hom$ complexes by restriction. Implicitly, in \eqref{eq:compose} we first turn the factor in
$$\CP^i(G; \zed_0, \zed)  \subset  \Hom_{\tR(\zed_0 \cup \zed)} \bigl(r_{\zed} \CFLc(\Hyper_G(\zed_0)), \CFLc(\Hyper_G({\zed_0 \cup \zed}))\bigr )$$
into  one in
$$ \Hom_{\tR(\zed_0 \cup \zed \cup \zed')} \bigl(r_{\zed \cup \zed'} \CFLc(\Hyper_G(\zed_0)), r_{\zed'}\CFLc(\Hyper_G({\zed_0 \cup \zed}))\bigr ),$$
by restriction of scalars using the natural ring map $\tR(\zed_0 \cup \zed \cup \zed') \to \tR(\zed_0 \cup \zed)$.

We define the {\em extended complex of positive pairs} associated to $G$ to be
$$ \CE^*(G) = \bigoplus_{\zed_0, \zed} \CP^*(G; \zed_0, \zed), $$
where the direct sum is over all collections $\zed_0, \zed$ such that $\zed_0 \cup \zed$ is consistent.

This breaks into a direct sum
$$ \CE^*(G) = \bigoplus_{k = 0}^{n-q} \CE^*(G; k),$$
according to the cardinality $k$ of $\zed$, i.e., the number of points
marked for destabilization.  Putting together the maps
\eqref{eq:compose}, we obtain global composition maps:
\begin {equation}
\label {eq:circ}
\circ : \CE^i(G; k) \otimes \CE^j(G; l) \to \CE^{i+j}(G; k+l),
\end {equation}
where the compositions are set to be zero when not a priori
well-defined on the respective summands. These composition maps
satisfy a Leibniz rule for the differential.

The complex $\CE^*(G)$ was mentioned in the Introduction. There we stated Conjecture~\ref{conj:extended}, which says that for any toroidal grid diagram $G$, we have
$$ \HE^d(G) = 0 \text{ when } d <0.$$

Observe that Conjecture~\ref{conj:extended} would be a direct consequence of Conjecture~\ref{conj}, because of Equation~\eqref{eq:gze}.

We prove a weaker version of Conjecture~\ref{conj:extended}, which applies only to sparse grid diagrams (as defined in the Introduction).

\begin {theorem}
\label {thm:esparse}
Let $G$ be a sparse toroidal grid diagram representing a link $L$. Then $\HE^d(G; k) = 0$ whenever $d < \min \{0, 2-k\}$.
\end {theorem}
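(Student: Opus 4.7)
The plan is to reduce Theorem~\ref{thm:esparse} to Theorem~\ref{thm:sparse} via the direct sum decomposition
\[
\CE^*(G;k) \;=\; \bigoplus_{\zed_0,\zed} \CP^*(G;\zed_0,\zed),
\]
where the sum runs over all disjoint pairs of sets of linked markings with $\zed_0\cup\zed$ consistent and $|\zed|=k$. For each summand, invoke the isomorphism \eqref{eq:gze},
\[
\CP^*(G;\zed_0,\zed)\;\cong\;\CP^*(G^{\zed_0,\zed},\zed)\otimes_\Ring M,
\]
where $M$ is a quotient of $\Ring$ obtained by setting certain $U_i$ variables equal. By the remark following Theorem~\ref{thm:sparse}, if $(G^{\zed_0,\zed},\zed)$ is sparse then the cohomology of the right-hand side also vanishes in degrees $<\min\{0,2-k\}$. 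So the whole question reduces to verifying that the destabilized pair $(G^{\zed_0,\zed},\zed)$ is sparse whenever $G$ is.

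The key combinatorial step, and essentially the only real content of the proof, is this sparseness inheritance. Suppose $Z,Z'\in\zed$ sit in rows $r<r'$ of $G$. Since $G$ is sparse and $Z,Z'$ are linked markings, one has $r'-r\ge 2$; moreover, the rows $r+1$ and $r'-1$, being adjacent to rows containing the linked markings $Z$ and $Z'$ respectively, contain no linked markings at all. In particular they contain no elements of $\zed_0$. The passage from $G$ to $G^{\zed_0}$ deletes only those rows that support markings of $\zed_0$, so rows $r+1$ and $r'-1$ survive. Consequently $Z$ and $Z'$ remain separated by at least one row in $G^{\zed_0,\zed}$, and so do not occupy adjacent rows there (note that the cases $r'=r+2$ and $r'\ge r+3$ both work, since in the former case $r+1=r'-1$ is a single surviving buffer row). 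The identical argument applied to columns shows that $Z,Z'$ are not in adjacent columns of $G^{\zed_0,\zed}$ either. Hence $(G^{\zed_0,\zed},\zed)$ is sparse.

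Applying Theorem~\ref{thm:sparse} (together with the tensor-product remark) to each summand $\CP^*(G;\zed_0,\zed)$ now gives the vanishing of its cohomology in degrees $d<\min\{0,2-k\}$, and summing over $(\zed_0,\zed)$ with $|\zed|=k$ completes the proof. Strictly speaking there is no hard step here: all of the analytic and combinatorial difficulty is absorbed into Theorem~\ref{thm:sparse} itself (proved in Section~\ref{sec:sparse}). The present theorem is essentially a packaging statement, and the only thing to watch for is that the notion of sparseness for the pair $(G^{\zed_0,\zed},\zed)$ from Section~\ref{sec:complex} --- no two destabilization markings in adjacent rows/columns of the destabilized grid --- is the one to be verified, which is exactly what the buffer-row argument above accomplishes.
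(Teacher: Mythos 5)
Your proposal is correct and takes essentially the same route as the paper, whose proof simply deduces the statement from Theorem~\ref{thm:sparse} via Equation~\eqref{eq:gze} together with the key observation that destabilizations of a sparse diagram at linked markings are again sparse --- exactly the buffer-row argument you spell out. One minor caution: since the grid is toroidal, non-adjacency of the rows of $Z$ and $Z'$ in $G^{\zed_0}$ requires a surviving row on \emph{both} arcs between them, but the same reasoning provides the second buffer (the row on the other side of $Z$, which exists and survives because $G$ is sparse), so nothing is lost.
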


\begin {proof} This follows from Theorem~\ref{thm:sparse}, using
  Equation~\eqref{eq:gze}. The key observation is that all
  destabilizations of a sparse diagram at linked markings are also sparse.
\end {proof}

\subsection{Formal complex structures}

Let $G$ be a grid presentation for the link $L$, such that $G$ has $q \geq 1$ free markings. 
Our goal is to find an algorithm for computing $\HFm_{*}(S^3_\Lambda(L), \ux)$, where $\Lambda$ is a framing of $L$ and  $\ux$ a  $\spc$ structure on $S^3_{\Lambda}(L)$. We will first describe how to do so assuming that Conjecture~\ref{conj:extended} is true, and using Theorem~\ref{thm:Surgery}. 

By Theorem~\ref{thm:Surgery}, we need to compute the homology of the
complex $\C^-(G, \Lambda, \ux) \subseteq \C^-(G, \Lambda)$, with its differential $\D^-$. In the definition
of $\D^-$ we use the maps $\Phi^{\orM}_\s$, which in turn are based on
descent maps $\hat \De^{\orM}_{p^{\orM}(\s)}$ of the kind constructed in
Sections~\ref{sec:descent2}-\ref{sec:descent3}. In turn, the maps $\hat \De^{\orM}_{p^{\orM}(\s)}$ are obtained by compression of a hyperbox, and thus are sums of compositions of certain polygon maps of the form 
\begin{equation}
\label{eq:dezz}
 D_{\eps}^{\eps'-\eps}: C^{\eps} \to C^{\eps'},
 \end{equation}
in the notation of Section~\ref{sec:descent3}.   

We seek to understand the maps $D_{\eps}^{\eps'-\eps}$ combinatorially. Note that they involve counting $(k+2)$-gons of index $1-k$ in $\Sym^n(\TT)$, for $k \geq 0$.

In the case $k=0$, we know that holomorphic bigons are the same as empty rectangles on $\TT$,
cf.~\cite{MOS}. For $k=1$, one can still count holomorphic triangles
explicitly, cf.\ Section~\ref{sec:triangles}.

Unfortunately, for $k \geq 2$, the count of holomorphic $(k+2)$-gons
seems to depend on the almost complex structure on $\Sym^n(\TT)$. The
best we can hope for is not to calculate the maps $D_{\eps}^{\eps'-\eps}$
explicitly, but to calculate them up to chain homotopy. In turn, this will
give an algorithm for computing the chain complex $\C^-(G, \Lambda, \ux)$ up
to chain homotopy equivalence, and this is enough for knowing its homology.

Recall that a complex structure $j$ on the torus $\TT$ gives rise to a
complex structure $\Sym^n(j)$ on the symmetric product
$\Sym^n(\TT)$. In \cite{HolDisk}, in order to define Floer homology
the authors used a certain class of perturbations of $\Sym^n(j)$,
which are (time dependent) almost complex structures on $\Sym^n(\TT)$.
For each almost complex structure $J$ in this class, one can count
$J$-holomorphic polygons for various maps. 
Specifically, for any disjoint sets $\zed_0, \zed$ of linked markings on $G$ such that $\zed_0 \cup \zed$ is consistent, and for any 
$$ \x \in \Ta \cap \Tb^{\zed_0}, \ \ \ \y \in \Ta \cap \Tb^{\zed_0 \cup \zed}, \ \tilde \y = U_1^{N_1} \cdots U_n^{N_n} \tU_1^{\tN_1} \dots \tU_{n-q}^{\tN_{n-q}} \cdot (\y,\epsilon)$$
with
\begin{equation}
\label{eq:Ni}
 \tN_i = 0 \ \text{ when } \ X_i \in L(\zed_0 \cup \zed),
 \end{equation}
we denote by
$$ n_J^{\zed_0, \zed}(\x, \tilde \y) \in \ff $$
the count of
$J$-holomorphic $(k+2)$-gons between $\x$ and $\tilde \y$, in all
possible homotopy classes $\phi$ with $\mu(\x, \y) =\mu(\phi) = 1-k$,
and coming from all possible orderings of the elements of
$\zed$. Here, the exponents $N_i$ and $\tN_i$ keep track of the multiplicities of $\phi$ at the basepoints in $\Os^{\orL(\zed_0 \cup \zed)}$ and $\Xs^{\orL(\zed_0 \cup \zed)}$. The condition \eqref{eq:Ni} comes from the fact that $\Xs^{\orL(\zed_0 \cup \zed)}$ contains exactly those $X$'s that are not in $L(\zed_0 \cup \zed)$.

According to \eqref{eq:bigformula}, the map $ D_{\eps}^{\eps'-\eps}$ is constructed by counting holomorphic polygons in multi-diagrams where we use $\Os^{\orM-M_{\eps}}$, $\Xs^{\orM-M_{\eps}}$ for basepoints, and
$$ \zed_0 = \zed(\orM)^{\eps^>}, \ \ \ \zed_0 \cup \zed = \zed(\orM)^{(\eps^>)'}.$$
Thus, the values $n_J^{\zed_0, \zed}(\x, \tilde \y)$ determine $ D_{\eps}^{\eps'-\eps}$. Indeed, we can write
$$ D_{\eps}^{\eps'-\eps}( \x) = \sum_{\tilde \y} n_J^{\zed_0, \zed}(\x, \tilde \y) \cdot  \langle \tilde \y \rangle, $$
where $ \langle \tilde \y \rangle$ is obtained from $\tilde \y$ by changing the coefficient of $(\y, \epsilon)$ to the expression \eqref{eq:multiU}. Although that expression is complicated, it is based on the multiplicities of $\phi$ at the markings in $\Os^{\orM-M_{\eps}} \cup \Xs^{\orM-M_{\eps}}$, which are a subset of the markings in $\Os^{\orL(\zed_0 \cup \zed)} \cup \Xs^{\orL(\zed_0 \cup \zed)}$. These multiplicities are part of the information in the specific $\tilde \y$.

We are left to study the values $n_J^{\zed_0, \zed}(\x, \tilde \y)$.

Observe that, in order for $n_J^{\zed_0, \zed}(\x, \tilde \y)$ to be
nonzero, the pair $[\x, \tilde \y]$ has to be positive. Hence, the set
of values $n_J^{\zed_0, \zed}(\x, \tilde \y)$ produces well-defined
elements in the extended complex of positive pairs on $G$:
$$ c_k(J) = \sum n_J^{\zed_0, \zed}(\x, \tilde \y) \cdot  [\x, \tilde \y] \in \CP^{1-k}(G; \zed_0, \zed) \subseteq \CE^{1-k}(G; k), \ \ k \geq 1.$$

Lemma~\ref{lemma:d2} implies that the elements $c_k(J)$ satisfy the
following compatibility conditions, with respect to the composition
product~\eqref{eq:circ}:
$$ dc_k(J) = \sum_{i=1}^{k-1} c_i(J) \circ c_{k-i}(J).$$

In particular, $dc_1(J) = 0$. Note that $c_1(J)$ is given by the count of snail-like domains, and therefore is independent of $J$. We denote it by 
$$c_1^{\snail} \in \CE^0(G; 1).$$

\begin {definition}
\label{def:FormalComplexStructure}
A {\em formal complex structure} $\cx$ on the grid diagram $G$ (of grid number $n$, and with $q \geq 1$ free markings) consists of a family of elements 
$$c_k \in  \CE^{1-k}(G; k),\quad k = 1, \dots, n-q,$$
satisfying $c_1 = c_1^{\snail}$ and the compatibility conditions:
\begin {equation}
 \label {eq:compat}
 dc_k = \sum_{i=1}^{k-1} c_i \circ c_{k-i}.
 \end {equation}
\end {definition}

In particular, an (admissible) almost complex structure $J$ on $\Sym^n(\TT)$ induces a formal complex structure $\cx(J)$ on $G$.

\begin {remark}
If we let $\cx=(c_1, c_2, \dots) \in \CE^*(G)$, the relation~\eqref{eq:compat} 
is summarized by the equation
\begin {equation}
\label {eq:master}
 d\cx = \cx \circ \cx.
 \end {equation}
\end {remark}

\begin {definition}
\label {def:homot}
  Two formal complex structures $\cx = (c_1, c_2, \dots),
  \cx'=(c_1', c_2', \dots)$ on a grid diagram $G$ (of grid number $n$, with $q \geq 1$ free markings) are called {\em
    homotopic} if there exists a sequence of elements
$$ h_k \in \CE^{-k}(G; k),\quad k = 1, \dots, n-q $$
satisfying $h_1 = 0$ and 
\begin {equation}
\label {eq:homs}
c_k - c_k' = dh_k + \sum_{i=1}^{k-1} \bigl( c'_i \circ h_{k-i} + h_i \circ c_{k-i} \bigr ).
\end {equation}
\end {definition}

Observe that, if $J$ and $J'$ are (admissible) almost complex
structures on $\Sym^n(\TT)$, one can interpolate between them by a
family of almost complex structures. The resulting counts of
holomorphic $(2+k)$-gons of index $-k$ induce a homotopy between
$\cx(J)$ and $\cx(J')$.  There is therefore a canonical homotopy class
of formal complex structures that come from actual almost complex
structures.

\begin {lemma}
\label {lemma:1k}
Assume $\HE^{1-k}(G; k) = 0$ for any $k = 2, \dots, n-q$. Then any two formal complex structures on $G$ are homotopic.
\end {lemma}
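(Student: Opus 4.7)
The plan is to construct the homotopy $h = (h_1, h_2, \ldots)$ by induction on $k$, with the vanishing hypothesis $\HE^{1-k}(G;k)=0$ used at each step to solve a cocycle equation. The base case $k=1$ is forced: since $c_1 = c_1' = c_1^{\snail}$, we simply take $h_1 = 0$, which trivially satisfies~\eqref{eq:homs} for $k=1$.

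For the inductive step, suppose that $h_1,\dots,h_{k-1}$ satisfying~\eqref{eq:homs} have been chosen. Define the \emph{obstruction cochain}
$$ e_k \;=\; c_k - c_k' - \sum_{i=1}^{k-1}\bigl(c'_i \circ h_{k-i} + h_i \circ c_{k-i}\bigr) \;\in\; \CE^{1-k}(G;k). $$
Equation~\eqref{eq:homs} for $h_k$ is exactly $dh_k = e_k$, so it suffices to show that $e_k$ is a coboundary. Since $\HE^{1-k}(G;k)=0$ by hypothesis (using $k\geq 2$), this will follow once we verify $de_k = 0$.

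The verification of $de_k = 0$ is the main computation. It proceeds by expanding $de_k$ using the Leibniz rule for $\circ$ (noting that we are working over $\ff = \zz/2\zz$, so all signs are suppressed), and then substituting the master equations $dc_i = \sum_{j=1}^{i-1} c_j \circ c_{i-j}$ and $dc'_i = \sum_{j=1}^{i-1} c'_j \circ c'_{i-j}$ for $i\leq k$, together with the inductive relation $dh_i = c_i - c_i' - \sum_{j=1}^{i-1}(c'_j\circ h_{i-j} + h_j\circ c_{i-j})$ for $i < k$. After regrouping the resulting terms by the type of triple composition that appears (namely $c\circ c$, $c'\circ c'$, $c'\circ c$, $c'\circ c'\circ h$, $c'\circ h\circ c$, and $h\circ c\circ c$), one checks that each type appears an even number of times: each of the two-fold compositions cancels by reindexing, while each three-fold composition cancels between the contribution coming from $dc'_i$ (or $dc_{k-i}$) and the contribution coming from the inductive expansion of $dh_i$ (or $dh_{k-i}$). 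This is the standard Maurer--Cartan gauge-transformation identity for $A_\infty$-type structures, and the hard part is purely bookkeeping.

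Once $de_k=0$ is established, the hypothesis $\HE^{1-k}(G;k)=0$ produces an element $h_k\in\CE^{-k}(G;k)$ with $dh_k=e_k$, completing the induction. The main obstacle is not conceptual but notational: keeping the indices of the two kinds of master relations and the inductive hypothesis aligned so that all six families of terms cancel. Working over $\ff=\zz/2\zz$ is essential in that it removes sign conventions, turning the computation into a matter of matching reindexings.
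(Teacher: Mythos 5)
Your proposal is correct and follows the paper's proof essentially verbatim: induction on $k$ with $h_1=0$, forming the same obstruction cochain, invoking $\HE^{1-k}(G;k)=0$ after checking it is a cocycle via the Leibniz rule, the compatibility relations~\eqref{eq:compat}, and the inductive hypothesis~\eqref{eq:homs}. The paper simply writes out the term-by-term cancellation that you describe as the standard Maurer--Cartan gauge identity.
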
 

\begin {proof}
Let $\cx = (c_1, c_2, \dots), \cx'=(c_1', c_2', \dots)$ be two formal complex structures on $G$. We prove the existence of the elements $h_k$ by induction on $k$. When $k=1$, we have $c_1 = c_1' = c_1^{\snail}$ so we can take $h_1 =0$.  

Assume we have constructed $h_i$ for $i < k$ satisfying~\eqref{eq:homs},
and we need $h_k$. Since by hypothesis the cohomology group $\CE(G;
k)$ is zero in
degree $1-k$, it suffices to show that
\begin{equation}
  \label{eq:ck-cycle}
  c_k - c_k' - \sum_{i=1}^{k-1} \bigl( c'_i \circ h_{k-i} + h_i \circ c_{k-i} \bigr )
\end{equation}
is a cocycle. Indeed, we have
\begin {multline*}
d \Bigl( c_k - c_k' - \sum_{i=1}^{k-1} \bigl( c'_i \circ h_{k-i} + h_i
\circ c_{k-i} \bigr ) \Bigr ) \\
\begin{aligned}
&= \sum_{i=1}^{k-1} c_i \circ c_{k-i} - \sum_{i=1}^{k-1} c'_i \circ c'_{k-i} - \sum_{i=1}^{k-1} \bigl( dc'_i \circ h_{k-i} + c'_i \circ dh_{k-i} + dh_i \circ c_{k-i} + h_i \circ dc_{k-i} \bigr) \\
&= 
\sum_{i=1}^{k-1} (c_i - c_i' - dh_i) \circ c_{k-i} + \sum_{i=1}^{k-1} c_i' \circ (c_{k-i} - c'_{k-i} -dh_{k-i}) - \sum_{i=1}^{k-1} dc'_i \circ h_{k-i} - \sum_{i=1}^{k-1} h_i \circ dc_{k-i} \\
&= \sum(c_{\alpha}'h_{\beta} c_{\gamma} + h_{\alpha}c_{\beta}
c_{\gamma}) +  \sum(c_{\alpha}'h_{\beta} c_{\gamma} +
c'_{\alpha}c'_{\beta} h_{\gamma}) - \sum  c'_{\alpha}c'_{\beta}
h_{\gamma} - \sum  h_{\alpha}c_{\beta} c_{\gamma}\\
&= 0.
\end{aligned}
\end {multline*}
In the second-to-last line the summations are over $\alpha, \beta, \gamma \geq
1$ with $\alpha + \beta + \gamma = k$, and we suppressed the
composition symbols for simplicity.
\end {proof}

\subsection {Combinatorial descriptions}
Consider a formal complex structure $\cx$ on a grid $G$ (of grid number $n$, with $q \geq 1$ free markings),  a framing
$\Lambda$ for $L$, and an equivalence class $\ux  \in \bigl(\H(L)/H(L, \Lambda)\bigr)$ $\cong \spc(S^3_\Lambda(L))$. We seek to define a complex $\C^-(G, \Lambda, \ux, \cx)$ analogous to the complex $\C^-(G, \Lambda, \ux)$ from Section~\ref{sec:surgery}, but defined using the elements $c_k$ instead
of the holomorphic polygon counts. (In particular, if $\cx = \cx(J)$
for an actual almost complex structure $J$, we want to recover the
complex $\C^-(G, \Lambda, \ux)$ from Section~\ref{sec:surgery}.)

Let us explain the construction of $\C^-(G, \Lambda, \ux,
\cx)$. Recall that in Section~\ref{sec:surgery} the complex $\C^-(G,
\Lambda, \ux)$ was built from projection-inclusion maps, descent maps, and isomorphisms. The descent maps are obtained by compression of hyperboxes that involve the polygon maps $ D_{\eps}^{\eps'-\eps}$. We can define analogous maps $ D_{\eps}^{\eps'-\eps}[\cx]$, by counting $(k+2)$-gons according to the coefficients of enhanced domains that appear in $\cx_k$. A homotopy between $\cx$ and $\cx'$ gives homotopy equivalences between the hyperboxes based on $ D_{\eps}^{\eps'-\eps}[\cx]$ and $ D_{\eps}^{\eps'-\eps}[\cx']$. 

Taking into account the naturality properties of compression discussed at the end of Section~\ref{sec:hyper}, we obtain the following lemma:

\begin {lemma}
\label {lemma:xx}
A homotopy between formal complex structures $\cx, \cx'$ on $G$
induces a chain homotopy equivalence between the complexes $\C^-(G, \Lambda, \ux, \cx)$
and $\C^-(G, \Lambda, \ux, \cx')$.
\end {lemma}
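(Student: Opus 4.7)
The plan is to promote the homotopy $(h_k)$ from a relation between elements of the extended complex $\CE^*(G)$ to a chain homotopy between the two surgery complexes, mirroring the very construction of $\C^-(G,\Lambda,\ux,\cx)$ itself. First I would unpack the definitions: the differential on $\C^-(G,\Lambda,\ux,\cx)$ is assembled from maps $\Phi^{\orM}_\s$ built as compositions of the $\cx$-independent inclusions $\Pr^{\orM}_\s$ and identifications $\Psi^{\orM}_{p^{\orM}(\s)}$ with a compressed destabilization map $\hat\De^{\orM}_{p^{\orM}(\s)}(\cx)$, where the latter is obtained by compressing a hyperbox $\Hyper^{\orL,\orM}_\s(\cx)$ whose face maps are assembled from the pieces $c_k$. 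Each $c_k$ supplies the coefficients for counting $(k+2)$-gons in place of pseudo-holomorphic ones, and the compatibility equation~\eqref{eq:compat} is exactly what makes $\Hyper^{\orL,\orM}_\s(\cx)$ a genuine hyperbox of chain complexes, playing the combinatorial role of Lemma~\ref{lemma:d2}; analogous remarks hold for $\cx'$.

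Given the homotopy data $(h_k)$ satisfying~\eqref{eq:homs}, the next step is to assemble, for each oriented sublink $\orM \subseteq \orL$ and each $\s \in J(\orM)$, a chain map of hyperboxes
\[
H^{\orL,\orM}_\s \colon \Hyper^{\orL,\orM}_\s(\cx) \longrightarrow \Hyper^{\orL,\orM}_\s(\cx')
\]
in the sense of Section~\ref{sec:hyper}, whose face maps are built from the $h_k$ in the same way that the face maps of $\Hyper^{\orL,\orM}_\s(\cx)$ are built from the $c_k$. The Leibniz rule for $d$ with respect to the composition product~\eqref{eq:circ}, combined with equation~\eqref{eq:homs}, should then yield the identity
\[
\sum_{\eps' \le \eps}\bigl(\De^{\eps-\eps'}(\cx') \circ H^{\eps'} + H^{\eps-\eps'} \circ \De^{\eps'}(\cx)\bigr) = 0
\]
required of a chain map of hyperboxes. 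The condition $h_1 = 0$ ensures that the vertex-level components of $H^{\orL,\orM}_\s$ are the identity maps on the Floer complexes $\Chain^-(\Ta,\Tb^{\zed(\orM)^\eps},\s)$, so each $H^{\orL,\orM}_\s$ is automatically a chain homotopy equivalence.

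Finally I would invoke the naturality of compression stated at the end of Section~\ref{sec:hyper}: a chain map of hyperboxes descends to a chain map of the compressed hypercubes, and a chain homotopy equivalence descends to a chain homotopy equivalence. Composing the compressed maps $\hat H^{\orL,\orM}_{p^{\orM}(\s)}$ with the $\cx$-independent operators $\Pr^{\orM}_\s$ and $\Psi^{\orM}_{p^{\orM}(\s)}$ (which satisfy the analogue of Lemma~\ref{lemma:comm} with respect to $\hat H$ for the same reason they do with respect to $\hat\De$) assembles the desired chain homotopy equivalence $\C^-(G,\Lambda,\ux,\cx) \simeq \C^-(G,\Lambda,\ux,\cx')$, because the surgery complex is itself built from these pieces via the hypercube construction of Section~\ref{sec:surgery}.

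The main obstacle will be the bookkeeping in the middle step: specifying precisely which face of which sub-hyperbox is assigned which composition of $h_i$'s with $c_j$'s and $c'_j$'s, and verifying that~\eqref{eq:homs} together with~\eqref{eq:compat} for both $\cx$ and $\cx'$ accounts exactly for the terms appearing in the hypercube chain-map relations. This verification is, however, a purely formal computation, entirely parallel to the derivations of Lemmas~\ref{lemma:d2} and~\ref{lemma:comm}, and should pose no substantive difficulty beyond careful indexing.
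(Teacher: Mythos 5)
Your proposal is correct and is essentially the paper's argument: the paper simply remarks that the lemma follows from the naturality of compression stated at the end of Section~\ref{sec:hyper}, and your write-up (interpreting \eqref{eq:homs} as the chain-map relation for hyperbox maps with identity vertex components built from the $h_k$, then compressing and assembling with the $\cx$-independent maps $\Pr$ and $\Psi$) is exactly the intended ``straightforward'' verification. The only quibble is that the identity vertex components are part of how one sets up the hyperbox chain map, while $h_1=0$ makes the edge components vanish, but this does not affect the argument.
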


With this in mind, we are ready to prove the two theorems advertised in the Introduction.

\begin{proof}[Proof of Theorem~\ref{thm:Three}]
The algorithm to compute $\HFm$ of an integral surgery on a link $\orL$
goes as follows. First, choose a sparse grid diagram $G$ for $\orL$ (for example, by taking the sparse double of an ordinary grid diagram, as in Figure~\ref{fig:sparse}). Then, choose any formal complex structure on $G$, construct the complex
$\C^-(G, \Lambda, \ux, \cx)$, and take its homology. 

Let us explain why this algorithm is finite and gives the desired answer. Observe that
$\CE^*(G)$ is finite in each degree, and the number $k$ of destabilization
points is bounded above by $n-q$, so the direct sum $\oplus_{k \geq 1} \CE^{1-k}(G; k)$ is a finite set. Further, we know that a 
formal complex structure exists, because it could be induced by some
almost complex structure $J$. Thus, we can find a formal complex structure $\cx$ on $G$ by a brute
force approach: go over all the (necessarily finite) sequences $ \cx = (c_1=c_1^{\snail}, c_2, c_3, \dots)  \in \oplus_{k \geq 1} \CE^{1-k}(G; k)$, and pick the first one that satisfies Equation~\eqref{eq:master}. Then, by Theorem~\ref{thm:esparse} and Lemma~\ref{lemma:1k} we know that all
possible $\cx$'s are homotopic. Lemma~\ref{lemma:xx}, together
with Theorem~\ref{thm:Surgery}, tells us that the homology of $\C^-(G, \Lambda, \ux, \cx)$ is indeed the right answer.

(We could alternately take a somewhat more efficient, step-by-step approach to finding $\cx$:
start with $c_1=c_1^{\snail}$ and inductively find each $c_k$ for $k
\ge 2$.  Since Formula~\eqref{eq:ck-cycle} represents a cycle, the
obstruction to extending a partial formal complex structure to the next step
vanishes.)

Note that although this description is combinatorial in nature, the complex $\C^-(G, \Lambda, \ux, 
\cx)$ is still infinite dimensional, being an infinite direct product of modules over a ring of power series. However, we can replace it by a quasi-isomorphic, finite dimensional complex over $\ff=\zz/2\zz$ using the vertical and horizontal truncation procedures from \cite[Section 10]{LinkSurg}. Taking the homology of the truncated complex is clearly an algorithmic task.

One can calculate the other versions of Heegaard Floer homology ($\HFinf, \iHF^+$) in a similar way, using Theorem~\ref{thm:AllVersions}. 
\end {proof}

\begin{proof}[Proof of Theorem~\ref{thm:Four}]
One can calculate the maps induced by two-handle additions using Theorems~\ref{thm:Cobordisms} and \ref{thm:AllVersions}. Furthermore, we can calculate the mixed invariants of closed four-manifolds using Proposition~\ref{prop:mixed}. In all cases, one proceeds by choosing an arbitrary formal complex structure $\cx$ on the grid diagram $G$, and computing the respective groups or maps using polygon counts prescribed by $\cx$. 
\end {proof}

We also have the following:
 
\begin {theorem}
\label {thm:SpectralComb}
Fix a sparse grid diagram $G$ for an oriented link $\orL' \cup \orL$ in $S^3$. Fix also framings $\Lambda$ for $L$ and $\Lambda'$ for $L'$.  Suppose that $L$ has $\ell$ components $L_1, \dots, L_\ell$. Let $Y(0,\dots,0) = S^3_{\Lambda'}(L')$, and, for any $\eps \in \E_\ell$, let $Y(\eps)$ be obtained from $Y(0,\dots,0)$ by surgery on the components $L_i \subseteq L$ with $\eps_i = 1$. Then, all the pages of the link surgeries spectral sequence from Theorem~\ref{thm:OSspectral} (with $E^1 = \oplus_{\eps \in \E_\ell} \CFm(Y(\eps))$ and coefficients in $\ff =\zz/2\zz$) are algorithmically computable.
\end {theorem}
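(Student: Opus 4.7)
The plan is to combine Theorem~\ref{thm:SpectralSequence}, which identifies the link surgeries spectral sequence (tensored with $V=H_*(T^{n-q-\ell})$) with the spectral sequence of the depth filtration on $\C^-(G,\Lambda'\cup\Lambda\hey L)$, with the combinatorial model of $\C^-$ produced in the proof of Theorem~\ref{thm:Three}. Since $G$ is sparse, so is every grid $G^{L'\cup L^\eps}$ obtained from $G$ by deleting rows and columns of sublinks, so Theorem~\ref{thm:esparse} and Lemma~\ref{lemma:1k} give us a canonical (up to homotopy) formal complex structure $\cx$ on $G$ which simultaneously restricts to a formal complex structure on each $G^{L'\cup L^\eps}$. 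Thus, for each $\eps\in\E_\ell$, we can combinatorially assemble a complex $\C^-(G^{L'\cup L^\eps},\Lambda'\cup\Lambda|_{L^\eps},\cx)$ as in Section~\ref{sec:comps}, and place its tensor with the appropriate torus-homology factor at vertex $\eps$ of our new hypercube $\C^-(G,\Lambda'\cup\Lambda\hey L,\cx)$.

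The key point is that the edge maps $G^-_{\eps<\eps'}$ in this hypercube are pure sums of inclusion maps of subcomplexes (one for each $\spc$ structure on the corresponding two-handle cobordism), and these inclusions are written down explicitly in terms of the variables $U_i$ on generators, with no polygon counts; moreover, along faces of dimension $\geq 2$ the diagonal maps are set to zero on the nose. Hence the entire construction of $\C^-(G,\Lambda'\cup\Lambda\hey L,\cx)$, together with its depth filtration by $-\|\eps\|$, is explicit once the formal complex structure $\cx$ has been chosen, and finding such a $\cx$ is a finite search by the argument in the proof of Theorem~\ref{thm:Three}.

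Next, I would verify that a homotopy between two formal complex structures $\cx,\cx'$ on $G$ induces a chain homotopy equivalence between $\C^-(G,\Lambda'\cup\Lambda\hey L,\cx)$ and $\C^-(G,\Lambda'\cup\Lambda\hey L,\cx')$ that strictly preserves the depth filtration. This is the step I expect to be the main obstacle: one must trace through the compression construction of Section~\ref{sec:hyper} and check that the induced chain homotopy equivalence coming from Lemma~\ref{lemma:xx} acts only within each vertex complex, since the edge maps $G^-_{\eps<\eps'}$ are polygon-independent inclusions. Granting this, the pages $E^r$ for $r\geq 1$ of the spectral sequence associated to the depth filtration on $\C^-(G,\Lambda'\cup\Lambda\hey L,\cx)$ are independent of $\cx$ up to canonical isomorphism, and by Theorem~\ref{thm:SpectralSequence} they agree with the pages of the link surgeries spectral sequence, tensored with $V$. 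Dividing out $V$ (which has known rank) recovers the original pages.

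Finally, to make the computation actually terminate, I would apply the horizontal and vertical truncation procedure from \cite[Section~6]{LinkSurg} component-wise at each vertex of the hypercube, choosing truncation bounds large enough that the depth filtration, the differentials, and the edge maps all descend; the truncated complex is finite-dimensional over $\Field$, and its filtered homology can be computed page by page by standard linear algebra over $\Field=\Z/2\Z$. Since only finitely many pages are nontrivial on each bounded piece of the truncated complex, and the truncation error can be controlled to arbitrary accuracy in the $U$-adic topology, each page $E^r$ is algorithmically computable.
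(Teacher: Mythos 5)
Your proposal is correct and follows essentially the same route as the paper, whose proof is just the terse three-sentence version: invoke Theorem~\ref{thm:SpectralSequence}, build the analogue $\C^-(G, \Lambda' \cup \Lambda, \cx \hey L)$ using a formal complex structure $\cx$, and take the spectral sequence of the depth filtration. The extra points you spell out (homotopy-independence of the pages via sparseness, the fact that the inclusion edge maps involve no polygon counts, and truncation to ensure finiteness) are exactly the details the paper leaves implicit by referring back to the proof of Theorem~\ref{thm:Three}.
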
 
 
\begin {proof}
Use the equivalent description of the spectral sequence given in Theorem~\ref{thm:SpectralSequence}. Choose a formal complex structure $\cx$ on the grid diagram $G$, and construct a complex  $\C^-(G, \Lambda' \cup \Lambda, \cx \hey L)$ analogous to $\C^-(G, \Lambda' \cup \Lambda \hey L)$, but using the polygon counts given by $\cx$. Then compute the spectral sequence associated to the depth filtration on $\C^-(G, \Lambda' \cup \Lambda, \cx \hey L)$.   
\end {proof}

\begin {remark}
Suppose a link $L$ has grid number $m$, that is, $m$ is the lowest number such that $G$ admits a grid presentation of that size. Our algorithms above are based on a sparse grid diagram for $L$, and such a diagram must have grid number at least $2m$. If Conjecture~\ref{conj:extended} were true, we would obtain more efficient algorithms, because we could start with a diagram of grid number only $m+1$ (by adding only one free marking to the minimal grid).
\end {remark}

\begin {remark}
Our present techniques do not give a combinatorial procedure for
finding the map $F^-_{W,\t}$ associated to an arbitrary cobordism map  (in a given degree)
or even its rank. However, suppose $W$ is a cobordism between
connected three-manifolds $Y_1$ and $Y_2$ such that the induced maps
$H_1(Y_1; \zz)/\Tors \to H_1(W; \zz)/\Tors$ and $H_1(Y_2; \zz)/\Tors
\to H_1(W; \zz)/\Tors$ are surjective. Then the ranks of $F^-_{W,
  \tt}$ in fixed degrees can be computed using the same arguments as
in \cite[Section 4]{LMW}. Indeed, they are determined by the ranks of
the map induced by the two-handle additions which are part of the
cobordism $W$.
\end {remark}

\section{Sparse grid diagrams}
\label {sec:sparse}

This section is devoted to the proof of Theorem~\ref{thm:sparse}. Let
$(G, \zed)$ be a sparse toroidal grid diagram with some free $O$'s marked for
destabilization, as in Section~\ref{sec:uleft}. We define a filtration
$\F$ on $\CP^*(G, \zed)$ as follows. Let us mark one $T$ in each square of
the grid with the property that neither its row nor its column
contains an $O$ marked for destabilization.  See
Figure~\ref{fig:ys}, where the squares
marked by a $T$ are shown shaded.

Given a pair $[\x, \tilde \y]$, let $(D, \epsilon, \rho)$ be an
enhanced domain from $\x$ to $\tilde \y$. Let $T(D)$ be the number of
$T$'s inside $D$, counted with multiplicity. Define
\begin {equation}
\label {eq:fex}
\F([\x, \tilde \y]) = -T(D) - \sum_{j=1}^k \rho_{j}.
\end {equation}

It is easy to see that $T(D)$ does not change under the notion of equivalence from Definition~\ref{def:EquivEnhanced}. The same is true for the real multiplicities
$\rho_{j}$. Therefore, in view of Lemma~\ref{lem:EquivEnhanced}, the value $\F([\x, \tilde \y])$  is well-defined. Furthermore, pre- or post-composing
with a rectangle can only decrease $\F$, so $\F$ is indeed a
filtration on $\CP^*(G, \zed)$.

To show that $\HP^d(G, \zed) = 0 $ (for $d > 0$) it suffices to check that
the homology of the associated graded groups to $\F$ are zero. In the
associated graded complex $\grcp$, the differential only involves
composing with rectangles $r$ such that $r$ is supported in a row or
column going through some $O_{i_j} \in \zed$ marked for destabilization, but
$r$ does not contain $O_{i_j}$. We cannot post-compose with such a
rectangle, because it would move the destabilization corners in $\y$,
and that is not allowed. Thus, the differential of $\grcp$ only
involves pre-composing with rectangles as above.

For each positive pair $\p = [\x, \tilde \y]$, we let $C\F^*(G,\zed, \p)$ be
the subcomplex of $\grcp$ generated by $\p$ and those pairs related to
$\p$ by a sequence of nonzero differentials in $\grcp$. More precisely, for
each complex over $\ff$ freely generated by a set $S$, we can
form an associated graph whose set of vertices is $S$ and
with an edge from $\x$ to $\y$ ($\x, \y\in S$) whenever the
coefficient of $\y$ in $d\x$ is one.  Then the graph of $C\F^*(G,\zed,\p)$
is the connected component containing~$\p$ of the graph of $\grcp$
(with respect to the standard basis).

\begin {definition}
  Let $(G, \zed)$ be a toroidal grid diagram with $\zed= \{O_{i_j}| j=1, \dots, k\}$ marked for
  destabilization.  The four corners of the square
  containing $O_{i_j}$ are called \emph{inner corners} at $O_{i_j}$.
  An element $\x \in \S(G)$ is
  called {\em inner} if, for each $j=1, \dots, k$, at least one of the
  inner corners at $O_{i_j}$ is part of $\x$.
  The element $\x$ is called {\em outer} otherwise.
\end {definition}

\begin {lemma}
\label {lemma:inner}
Let $(G, \zed)$ be a sparse toroidal grid diagram with some free $O$'s marked for
destabilization, where $|\zed| = k \geq 2$. Let $\p = [\x, \tilde \y]$ be a
positive pair such that $\x$ is inner. Then the index $I(\p)$ is at
least $2-k$.
\end {lemma}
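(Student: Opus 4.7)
Following the reduction carried out in Proposition~\ref{prop:oneO}, I would first assume without loss of generality that every row and every column of $G$ contains at least one square where $D$ has multiplicity zero; subtracting a periodic row or column domain either decreases the index by~$2$ (if the row/column does not meet a destabilized $O$) or preserves it (if it does, by simultaneously adjusting $\rho$). Under this reduction the multiplicity of $D$ changes only across \emph{special} rows and columns---those on which $\x$ and $\y$ disagree---each change being $\pm 1$. Consequently, every non-shared vertex of $\x \cup \y$ lies at an intersection of a special row with a special column and satisfies $n_v(D) \geq 1/4$, while shared vertices contribute zero to the Lipshitz sum.

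The plan is to apply Lipshitz's formula
\[
I(D) = \sum_{x \in \x} n_x(D) + \sum_{y \in \y} n_y(D)
\]
and separate the contributions into three groups:
\begin{itemize}
\item The $k$ destabilization points $p_j \in \y$, contributing $n_{p_j}(D) \geq f_j + (2\epsilon_j - 1)/4$ by the positivity inequalities~\eqref{ineq:abcd1}.
\item The $k$ inner corners $q_j \in \x$ provided by the hypothesis that $\x$ is inner; these are pairwise distinct because sparseness ensures that the squares containing different destabilized $O$'s share no corners. A case analysis on which of the four corners of $O_{i_j}$'s square the corner $q_j$ is, combined with the positivity of $D$, yields a lower bound on $n_{q_j}(D)$ depending on $f_j$ and $\epsilon_j$.
\item The remaining non-shared vertices, each contributing at least $1/4$. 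Their count is controlled by $m$, the common number of special rows and columns, which satisfies $m \geq 2(f_j + \epsilon_j)$ for every $j$ by the column-count argument of Proposition~\ref{prop:oneO} applied to~$O_{i_j}$.
\end{itemize}

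Combining these estimates and summing over $j$, I expect to obtain an inequality of the form
\[
I(D) \geq (2 - k) + \sum_{j=1}^{k} (\epsilon_j + 2 f_j),
\]
from which the lemma follows immediately after substitution into $I(\p) = I(D) - \sum_j (\epsilon_j + 2 f_j)$.

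The main obstacle lies in the case where some $q_j$ is the corner of $O_{i_j}$'s square diagonally opposite $p_j$: then the local positivity bound gives only $n_{q_j}(D) \geq f_j/4$, which alone is insufficient. The missing slack must be extracted from the count of remaining special-row/column vertices, and it is here that the sparseness hypothesis plays its role. Because no two destabilized $O$'s lie in adjacent rows or adjacent columns, the special rows and columns forced locally by different $O_{i_j}$'s can be combined additively without double-counting, providing the additional $\tfrac{1}{4}$ contributions needed to close the estimate.
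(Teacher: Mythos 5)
Your overall strategy (Lipshitz's formula, the positivity inequalities \eqref{ineq:abcd1} at the destabilization points, a case analysis at the inner corners, and sparseness to prevent double counting) is the same as the paper's, but the proposal has a genuine gap at its final step: the inequality $I(D) \geq (2-k) + \sum_j (\epsilon_j + 2f_j)$ that you "expect to obtain" is exactly the statement of the lemma, and it does not follow from the local estimates you list. When one carries out the corner-by-corner analysis carefully (this is the content of the bound $n_j \geq 2f_j + \epsilon_j/2$ in the paper, where $n_j$ collects the corner contributions along the four lines bordering the row and column of $O_{i_j}$, with contributions halved when a corner is shared between two $O_{i_j}$'s), the best one gets by summing over $j$ is $I(\p) \geq -\sum_j \epsilon_j/2 \geq -k/2$, cf.\ \eqref{eq:index2}. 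That suffices only for $k \geq 4$; for $k = 3$ one must additionally invoke integrality of the index, and for $k = 2$ the bound $-k/2 = -1$ falls short of the required $0$. The local per-$j$ estimate is actually sharp for type-$R$ destabilizations (Figure~\ref{fig:equality} exhibits the equality configuration), so no refinement of the pointwise bounds at $p_j$ and $q_j$ can close this gap; the paper needs a separate global rigidity argument for $k=2$, showing that if $I(\p) = -1$ then both destabilizations are forced into the equality configuration of Figure~\ref{fig:equality}, and then tracing multiplicity drops along the column of $O_{i_1}$ around the torus to produce extra corners whose contributions were not counted in $n_1$ or $n_2$, a contradiction. Your proposal never addresses the small-$k$ cases, which are where the real difficulty of the lemma lies.

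A secondary issue: in the "diagonally opposite" case your quoted bound $n_{q_j}(D) \geq f_j/4$ is not what the positivity constraints give (the relevant quadrant bounds yield roughly $f_j - 3/4$ at that corner), and the plan to "extract the missing slack from the remaining special-row/column vertices" is only gestured at. In the paper this step is carried out by showing that the two remaining corners on the vertical lines through $O_{i_j}$ contribute at least $1/2$ to $n_j$ (using the halving convention, and using that two $90^{\circ}$ corners there must lie on the same horizontal circle and hence are not shared with any other $n_l$), and similarly for the horizontal lines; sparseness is what guarantees the inner corner itself is counted in only one $n_l$. Also, your remark that shared vertices of $\x$ and $\y$ "contribute zero" to the Lipshitz sum is false as stated (they contribute twice a nonnegative average multiplicity), though this is harmless for a lower bound.
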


\begin {proof}
Let $(D, \epsilon, \rho)$ be a positive enhanced domain going between
$\x$ and $\tilde \y$. Then
$$ I(\p) = I(D, \epsilon, \rho) = I(D) - \sum_{j=1}^k (\epsilon_j + 2f_j)$$
by Equation~(\ref{eq:enhanced-index}).

Just as in the proof of Proposition~\ref{prop:oneO}, without loss of
generality we can assume that every row or column contains at least
one square where the multiplicity of $D$ is zero; hence, as we move
to an adjacent row (or column), the multiplicity of the domain~$D$ can only
change by $0$ or $\pm 1$.

According to Equation~\eqref{eq:lipshitz}, the usual index $I(D)$ is given by the sum of
the average multiplicities of $D$ at the corners. For each $j =1,
\dots, k$, define $n_j$ to be the sum of the average
multiplicities at the corners situated on one of the following four
lines: the two vertical lines bordering the column of $O_{i_j}$ and
the two horizontal lines bordering the row of $O_{i_j}$; with the
caveat that, if such a corner $c$ (with average multiplicity $a$
around it) appears on one of the four lines for $O_{i_j}$ and also on
one of the four lines for $O_{i_l}$, $l \neq j$, then we let $c$
contribute $a/2$ to $n_j$ and $a/2$ to $n_l$. For an example, see
Figure~\ref{fig:inner}. Note that, since the diagram is sparse, the
average multiplicities at inner corners are only counted in one~$n_l$.

\begin{figure}
\begin{center}
\input{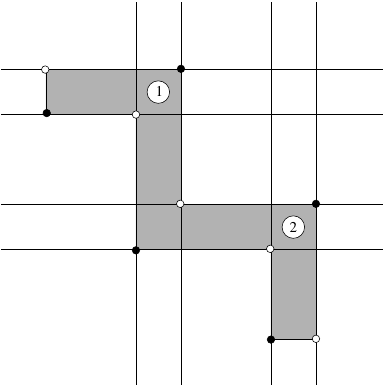_t}
\end{center}
\caption {{\bf An example of a positive enhanced domain with $\x$ inner.} We
  have $k=2$ and the two $O$'s marked for destabilization ($O_{i_j},
  j=1,2$) are shown in the figure with the value of $j$ written
  inside. We view this as a domain of type $LL$, meaning that
  $\epsilon_1 = \epsilon_2 = 0$, and with the real multiplicities at the
  destabilization points equal to zero, so that $f_1 = f_2 = 1$.  The
  domain has index $0$, and the quantities $n_1$ and $n_2$ both equal
  $3\cdot (1/4) + (3/4) + (1/4 + 3/4)/2 = 2$. }
\label{fig:inner}
\end{figure}

We get
\begin {equation}
\label {eq:index1}
I(\p) \geq \sum_{j=1}^k (n_j - \epsilon_j - 2f_j).
\end {equation}

We will prove that
\begin{equation}
  \label{eq:nj-bound}
  n_j \geq 2f_j + \epsilon_j/2.
\end{equation}
Indeed, the relations \eqref{ineq:abcd1} imply that the average multiplicity at the destabilization point $p_j$ for $O_{i_j}$ (which is part of $\y$) is
$$\frac{a_j+b_j + c_j + d_j}{4} \geq   f_j + \frac{2\epsilon_j-1}{4}. $$
Since $\x$ is inner, there is also one point of $\x$, call it $x$, in
a corner of the square containing $O_{i_j}$. There are four cases,
according to the position of $x$.  We first consider the case when the
marking at $O_{i_j}$ is $L$ (i.e., $\epsilon_j = 0$).

\begin{enumerate}
\item 
If $x$ is the lower left corner (which is the same as the
destabilization point), we have $a_j + d_j = b_j +c_j \geq 2f_j$
there, so the average multiplicity at $x$ is at least $f_j$; since the
corner counts in both $\x$ and $\y$, we get $n_j \geq 2f_j$, as
desired.
\item If $x$ is the lower right corner, since $b_j, d_j \geq f_j$ and
  the multiplicity of $D$ can change by at most $\pm 1$ as we pass a
  column, we find that the average multiplicity at $x$ is at least
  $f_j - 1/4$. Together with the contribution from $p_j \in \y$, this
  adds up to $2f_j - 1/2$. Note that  \eqref{eq:abcd1} says that $a_j + d_j = b_j + c_j +1$, so the contribution from $p_j$ differs from $(b_j + d_j)/2 + 1/4$ by a multiple of $1/2$. Similarly, the contribution from $x$ differs from $(b_j + d_j)/2 + 1/4$ by a multiple of $1/2$. Thus, the sum of the contributions from $p_j$ and $x$ is a multiple of $1/2$. To see that this sum is at least $2f_j$, it suffices to argue that it cannot be $2f_j - 1/2$, i.e., the two contributions cannot be both exactly $f_j-1/4$. Suppose this were the case. Then if $r$ is a square other than
  $O_j$ in the row through $O_j$, and $s$ is the square directly below
  it, the local multiplicity at $r$ is one greater than it is at
  $s$.  This contradicts the positivity assumption combined with the
  assumption that there is at least one square with multiplicity~$0$
  in the row containing $O_j$.
 \item
The case when $x$ is the upper left corner is similar to lower right,
with the roles of the the row and the column through $O_{i_j}$
swapped.
\item
Finally, if $x$ is the upper right corner, then the average
multiplicity there is at least $f_j - 3/4$. Together with the
contribution from $p_j \in \y$, we get a contribution of at least
$2f_j - 1$. There are
two remaining corners on the vertical lines through $p_j$ and $x$; we
call them $c_1 \in \x$ and $c_2 \in \y$, respectively. We claim that
the contributions of the average multiplicities of $c_1$ and $c_2$ to
$n_j$ sum up to at least $1/2$. Indeed, if at least one of these
average multiplicities is $\geq 3/4$, their sum is $\geq 1$, which
might be halved (because the contribution may be split with another $n_l$)
to get at least $1/2$. If both of the average multiplicities
are $1/4$ (i.e., both $c_1$ and $c_2$ are $90^\circ$ corners), they
must lie on the same horizontal line, and therefore their
contributions are not shared with any of the other $n_l$'s; so they
still add up to $1/2$. A similar argument gives an additional
contribution of at least $1/2$ from the two remaining corners on the
row through $O_{i_j}$. Adding it all up, we get $n_j \geq (2f_j - 1) +
1/2 + 1/2 = 2f_j$.
\end{enumerate}

This completes the proof of Equation~\eqref{eq:nj-bound} when $D$ is
of type~$L$ at $O_{i_j}$.  When $D$ is of type~$R$ there (i.e.,
$\epsilon_j = 1$),
the contribution of~$x$ to $n_j$
is at least $f_j - \frac{3}{4}$. Studying the four possible positions
of $x$, just as in the $L$ case, gives additional contributions to
$n_j$ of at least $1$, which proves Equation~\eqref{eq:nj-bound}. In fact,
the contributions are typically strictly greater than $1$; the only
situation in which we can have equality in~\eqref{eq:nj-bound} when
$D$ is of type~$R$ is when $x$ is
the upper right corner and the local multiplicities around $x$ and
$p_j$ are exactly as in Figure~\ref{fig:equality}.

\begin{figure}
\begin{center}
\input{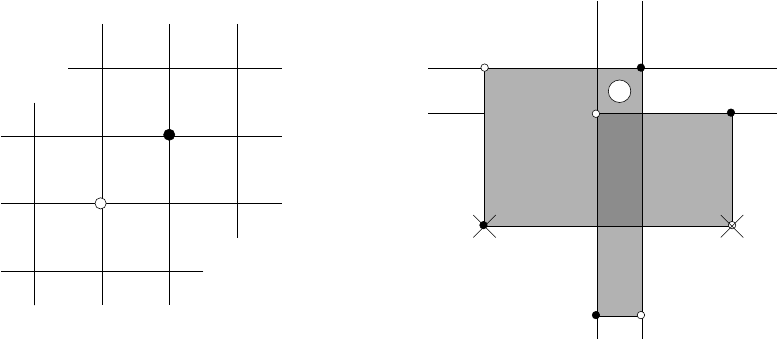_t}
\end{center}
\caption {{\bf Equality in Equation~\eqref{eq:nj-bound}.} On the left, we
  show the local multiplicities around $O_{i_j}$ in the case
  $\epsilon_j =1,n_j = 2f_j + 1/2$. (Note that the real multiplicity
  at $O_{i_j}$ is then zero.) On the right we picture a domain of this
  type, with $k=1$, $f_1 = 1$. There are two corners, marked by $\times$, whose contributions are not counted in $n_1$. As a consequence, the Inequality~\eqref{eq:index1} is strict.}
\label{fig:equality}
\end{figure}

Putting Equation~\eqref{eq:nj-bound} together with Inequality~\eqref{eq:index1},
we obtain
\begin {equation}
\label {eq:index2}
 I(\p) \geq \sum_{j=1}^k (-\epsilon_j/2) \geq -k/2.
\end {equation}

Our goal was to show that $I(\p) \geq 2-k$. This follows directly
from~\eqref{eq:index2} for $k \geq 4$; it also follows when $k = 3$,
by observing that $I(\p)$ is an integer. 

The only remaining case is $k=2$, when we want to show $I(\p) \geq 0$,
but Inequality~\eqref{eq:index2} only gives $I(\p) \geq -1$. However, if $I(\p) =
-1$ we would have equality in all inequalities that were used to
arrive at Inequality~\eqref{eq:index2}. In particular, both destabilizations are
of type $R$, the corresponding $x$'s are the upper right corners of
the respective squares, and the local multiplicities there are as in
Figure~\ref{fig:equality}. Observe that as we move down from the row
above $O_{i_j}$ to the row containing $O_{i_j}$, the local
multiplicity cannot decrease. The same is true as we move down from
the row containing $O_{i_j}$ to the one just below. On the other hand,
looking at the column of $O_{i_1}$, as we go down around the grid from
the square just below $O_{i_1}$ (where the multiplicity is $f_1+1$) to
the one just above $O_{i_1}$ (where the multiplicity is $f_j-1$), we
must encounter at least two horizontal circles where the multiplicity
decreases. By our observation above, neither of these circles can be
one of the two that bound the row through $O_{i_2}$. However, one or
two of them could be the circles of the two remaining corners on the
column through $O_{i_1}$. These corners only contribute to $n_1$, not
to $n_2$, and since we had equality when we counted their contribution
to be at least $1/2$, it must be the case that each of them is a
$90^{\circ}$ corner, with a contribution of $1/4$. This means that
they must lie on the same horizontal circle. Hence, there must be one 
other horizontal circle along which local multiplicities of our
domain decrease as we cross it from above. On this circle
there are some additional corners, with a nontrivial contribution to
$I(D)$ unaccounted for; compare the right hand side of
Figure~\ref{fig:equality}. (Note however that in that figure, we consider
$k=1$, rather than $k=2$.) These vertices contribute extra to the
vertex multiplicity, which means that $I(\p) > -1$.
\end {proof}

\begin {remark}
It may be possible to improve the inequality~\eqref{eq:index2} to
$I(\p) \geq 0$ for every $k > 0$ along the same lines, by doing a more
careful analysis of the contributions to $I(D)$.
\end {remark}

\begin {lemma}
\label {lemma:outer}
Let $(G, \zed)$ be a sparse toroidal grid diagram, and $\p = [\x, \tilde \y]$ a positive pair such that $\x$ is outer. Then the cohomology of $C\F^*(G, \zed, \p)$ is zero.
\end {lemma}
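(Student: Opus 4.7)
The plan is to construct an explicit chain contraction $H$ on $C\F^*(G, \zed, \p)$ with $dH + Hd = \id$, exploiting the outerness hypothesis. Pick some $j_0 \in \{1,\dots,k\}$ for which none of the four inner corners at $O_{i_{j_0}}$ lies in $\x$, and let $\alpha_a, \alpha_b$ (resp.\ $\beta_{c^-}, \beta_{c^+}$) be the two $\alpha$- (resp.\ $\beta$-) curves bounding the square of $O_{i_{j_0}}$. Outerness says that the two points of $\x$ on $\alpha_a \cup \alpha_b$ avoid $\beta_{c^\pm}$, and the two points of $\x$ on $\beta_{c^-} \cup \beta_{c^+}$ avoid $\alpha_{a}, \alpha_{b}$.

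First I would verify that outerness at $O_{i_{j_0}}$ is preserved under the differentials of $\grcp$, so that \emph{every} generator $[\x', \tilde\y]$ occurring in the connected component $C\F^*(G, \zed, \p)$ has $\x'$ outer at $O_{i_{j_0}}$. This is where sparseness enters in an essential way: the allowed rectangles in $\grcp$ are thin strips inside a row or a column through some $O_{i_j} \in \zed$, not containing $O_{i_j}$ itself. If $j \neq j_0$, then by sparseness the row (resp.\ column) of $O_{i_j}$ is non-adjacent to and disjoint from the row and column of $O_{i_{j_0}}$, so such a rectangle does not touch any of the four critical coordinates of $\x'$ on $\alpha_a \cup \alpha_b \cup \beta_{c^\pm}$. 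If $j = j_0$, then the excluded square is exactly the square of $O_{i_{j_0}}$, so rectangles in $O_{i_{j_0}}$'s row never move $x_a', x_b'$ into the column of $O_{i_{j_0}}$, and rectangles in $O_{i_{j_0}}$'s column never move $y_\ell', y_r'$ into the row of $O_{i_{j_0}}$. Either way, outerness at $O_{i_{j_0}}$ is preserved.

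Next, I would define $H$ by a ``minimal slide'' in the row through $O_{i_{j_0}}$. For each generator $[\x', \tilde\y]$, let $x_a' \in \x' \cap \alpha_a$ and $x_b' \in \x' \cap \alpha_b$; outerness guarantees neither lies on $\beta_{c^\pm}$. Fix once and for all a cyclic direction (say clockwise) around the torus and set $H([\x', \tilde\y])$ to be the result of pre-composing with the unique shortest empty rectangle in the row of $O_{i_{j_0}}$ that, starting from $x_a'$, slides it one admissible step in that direction without crossing $O_{i_{j_0}}$ (pairing it with the appropriate move of $x_b'$); set $H$ to zero if no such move is possible or if the would-be move lands on a ``forbidden'' position defined to make the pairing involutive. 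After routine but careful checking, $dH + Hd$ equals the identity on the component: the argument is the standard Koszul-type cancellation coming from the fact that, after cutting the row open at the square of $O_{i_{j_0}}$, the admissible positions of $(x_a', x_b')$ and the row rectangles assemble into the tensor product of an acyclic two-term complex with some fixed remainder data.

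The main obstacle I anticipate is showing that $H$ interacts cleanly with the other differentials of $\grcp$ — namely, rectangles in the column of $O_{i_{j_0}}$ and in rows/columns of the other $O_{i_j}$. For the latter, sparseness makes the relevant strips disjoint from the row of $O_{i_{j_0}}$, so $H$ commutes on the nose with those differentials. The delicate case is the column of $O_{i_{j_0}}$, where rectangles move $y_\ell', y_r'$ and live in a strip that meets the row of $O_{i_{j_0}}$ only at the (excluded) square of $O_{i_{j_0}}$; a direct local check should show that row-slides and column-slides commute because their rectangles are disjoint, so the $\id = dH + Hd$ identity coming from the row factor is stable under the column differential. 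Piecing this together, $C\F^*(G,\zed,\p)$ is contractible and its cohomology vanishes, as desired.
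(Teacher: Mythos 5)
Your plan runs in the right general direction (reduce to the row/column of a marking $O_{i_{j_0}}$ at which $\x$ has no inner corner, and cancel generators in pairs), but it has a genuine gap exactly where the paper's proof does its real work: positivity. The complex $C\F^*(G,\zed,\p)$ is spanned only by \emph{positive} pairs, and the differential in $\grcp$ adds a rectangle to the domain at the $\x$-end; a contracting homotopy must therefore go the other way, i.e.\ it must match a generator with a partner whose representing domain is obtained by \emph{subtracting} a rectangle (note that, as written, your $H$ ``pre-composes'' with a rectangle, which is the same direction as $d$ and has the wrong degree). Whether that partner is again a positive pair --- hence actually present in the complex --- is precisely the delicate point, and nothing in ``routine but careful checking'' or ``set $H$ to zero on forbidden positions'' addresses it: if some generator's intended partner fails to be positive, the would-be two-term acyclic factor is truncated and $dH+Hd=\id$ fails for that generator. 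In the paper this is handled by (i) introducing a second filtration $\G$ (built from auxiliary $Y$-markings whose multiplicities solve a linear system, so that $\G$ is invariant under adding periodic domains), which kills all differentials except those coming from certain rectangles in the column through $O_{i_j}$, so each graded piece has at most two generators $\p=[\x,\tilde\y]$ and $\p'=[\x',\tilde\y]$; and (ii) a three-case analysis showing $\p'$ is positive, where the hardest case requires first adding the periodic domain given by the column through $O_{i_j}$ before subtracting the rectangle. Your proposal contains no analogue of either step, and the commutation/acyclicity of your global matching against the differentials from the column of $O_{i_{j_0}}$ and from the other $O_{i_l}$'s is asserted rather than proved.

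Two smaller points. First, your justification that outerness at $O_{i_{j_0}}$ propagates through the component is not quite right: a rectangle in the row of $O_{i_{j_0}}$ avoiding $O_{i_{j_0}}$ \emph{can} have a vertical edge on one of the beta curves bounding the column of $O_{i_{j_0}}$, and then its two corners on that edge are inner corners of the square. The correct argument is that such a rectangle forces an inner corner into \emph{both} of the generators it connects, contradicting that the given generator has none; with that fix the propagation claim (which the paper also uses) is fine. Second, sparseness does not make the row of $O_{i_j}$ ``disjoint from'' the row and column of $O_{i_{j_0}}$; what you actually need, and what sparseness does give, is that the alpha (resp.\ beta) curves carrying the moved points of such a rectangle are not the curves bounding the square of $O_{i_{j_0}}$. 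So the skeleton of your reduction can be repaired, but as it stands the proof of vanishing rests on an unproved cancellation whose only difficult ingredient --- positivity of the partner, via periodic domains --- is missing.
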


\begin {proof}
  First, observe that, since the differential of $\grcp$ only involves
  pre-compositions, $\tilde \y$ is the same for all generators of
  $C\F^*(G, \zed, \p)$. Furthermore, $\x$ being outer means that there is some destabilization point $O_{i_j}$ such that $\x$ does not contain any of the inner corners at $O_{i_j}$. This property is preserved when we pre-compose with rectangles supported in the rows or columns containing the destabilization points. It follows that all the other
  generators $[\x', \tilde \y]$ of $C\F^*(G, \zed, \p)$ have the property that $\x'$ is outer.

  Now let $j$ be such that no corner of $O_{i_j}$ is in $\x$. Consider a
  new filtration $\G$ on $C\F^*(G, \zed, \p)$ given as follows. Let us mark
  one $Y$ in each square of the grid that lies in a column or row
  through an $O$ marked for destabilization, but does not lie in the
  column going though $O_{i_j}$ (where $j$ was chosen above), nor does
  it lie in the same square as one of the other $O_{i_l}$'s. Further,
  we mark $n-1$ copies of $Y$ in the square directly below the square of
  $O_{i_j}$. (Here $n$ is the size of the grid.) Finally, we mark one
  extra $Y$ in the square directly to the left of each $O_{i_l}$, $l \neq
  j$.  Observe that for every
  periodic domain equal to the row through some $O_{i_l}$ minus the
  column through $O_{i_l}$ (for some $l=1, \dots, k$, including $l=j$),
  the signed count of $Y$'s in that domain is zero.

Consider now the squares  in $G$ that do not lie in any row or column
that goes through an $O$ marked for destabilization. We denote them by
$s_{u,v}$, with $u, v \in \{1, \dots, n-k\}$, where the two indices
$u$ and $v$ keep track of the (renumbered) row and column,
respectively. Note that all these squares are already marked by an
$T$, used to define the filtration $\F$. We will additionally mark them
with several $Y$'s, where the exact number $\alpha_{i,j}$ of $Y$'s in
$s_{i,j}$ is to be specified soon. See Figure~\ref{fig:ys} for an example. 

\begin{figure}
\begin{center}
\input{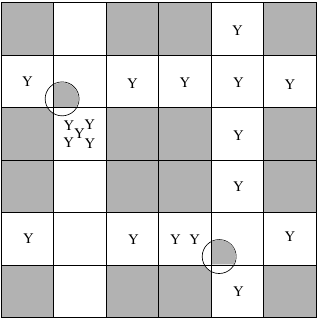_t}
\end{center}
\caption {{\bf The markings defining the filtrations.} We show here a
  grid diagram of grid number $6$, with two $O$'s marked for
  destabilization, namely $O_{i_1}=O_2$ and $O_{i_2}=O_5$. We draw small circles surrounding each of the two destabilization points. Each shaded
  square contains a $T$ and this defines the filtration $\F$. One could also imagine a $T$ marking in the same position as an $O$ marking, to account for the terms $\rho_j$ in Equation~\eqref{eq:fex}.  Given an outer generator $\x$, we need to choose some $j \in \{1,2\}$ such that no corner $O_{i_j}$ is in $\x$. In this figure, $j=1$. We then define a second filtration $\G$ on the components
  of the associated graded of $\F$, using the $Y$ markings as shown, plus five (invisible) $Y$ markings in the lower left quadrants of the small disks (bounded by beta circles) around each $O$ marked for destabilization. We first choose the $Y$ markings in the unshaded squares, then mark the shaded squares so that every periodic domain has a total of zero markings, counted with multiplicities.}
\label{fig:ys}
\end{figure}

Given one of the generators $\p' = [\x', \tilde \y]$ of $C\F^*(G, \zed, \p)$, 
choose an enhanced domain $E=(D, \epsilon, \rho)$ from $\x'$ to $\y$, and let $Y(D)$ be the number of $Y$'s inside $D$ (counted with multiplicity). Set
\begin {equation}
\label {eq:gp}
 \G(\p') = - Y(D)  - (n-1)\sum_{l=1}^k (c_l(E) - f_l(E) - \epsilon_l(E) +1).
\end {equation} 
Here $c_l, f_l, \epsilon_l$ are as in Section~\ref{sec:uleft}. The
second term in the formula above is chosen so that adding to $D$ a
column through some $O_{i_l}$ does not change the value of $\G$; indeed, from the proof of Lemma~\ref{lemma:Der}, we have that $c_l(E) - f_l(E) - \epsilon_l(E) +1=v_l(D')$, where $D'$ is the domain of a polygon inducing the enhanced domain $E$. Introducing this term
$$-(n-1)\sum_{l=1}^k (c_l(E) - f_l(E) - \epsilon_l(E) +1)$$ can be interpreted as marking $n-1$ additional $Y$'s in the lower left quadrant of each disk bounded by the small beta curve around an $O_{i_l}$. 

We require that the quantity $\G$ is well-defined, i.e., it should not depend on $D$, but only on
the pair $\p'$. For this to be true, in view of Lemma~\ref{lem:EquivEnhanced}, we need to ensure that the addition of a periodic domain to $D$ (of the kinds listed in Definition~\ref{def:EquivEnhanced}) 
does not change $\G$. We have already made sure that $\G$ is unaffected by adding:
\begin{itemize}
\item Periodic domains of type (a) corresponding to free $O_{i_l}$ marked for destabilization; that is, the column minus the row through some $O_{i_l}$;
\item Periodic domains of type (c), that is, columns through the $O_{i_l}$'s.
\end{itemize}
We are left with the periodic domains of type (a) that correspond to free $O$'s not marked for destabilization, and periodic domains of type (b). Note that those of type (b) are linear combinations of  domains of the following form: a column minus a row through the same linked $O$ marking. 

By a judicious choice of the quantities $\alpha_{u,v}$, for $u, v \in \{1, \dots, n-k\}$, we will actually arrange for something slightly stronger than what we need. We will make it so that $\G$ is unaffected by adding any domain of the form: a column minus row though some $O$ not marked for destabilization. 

Note that there is a permutation $\sigma$ of $\{1, \dots, n-k\}$ such that the unmarked $O$'s are in the squares $s_{u\sigma(u)}$. There are $n-k$ conditions that we need to impose on $\alpha_{u,v}$, namely
\begin {equation}
\label {eq:linear}
 \sum_{v=1}^{n-k} \alpha_{uv} - \sum_{v=1}^{n-k} \alpha_{v\sigma(u)} = t_u,  
\end {equation}
for $u=1, \dots, n-k$. Here $t_u$ are determined by the number of $Y$'s that we already marked in the respective row and column (as specified above, in squares not marked by $T$), with an extra contribution in the case of the row just below some $O_{i_l}$ and the column just to the left of $O_{i_l}$, to account for the term $c_l(E) - f_l(E) - \epsilon_l(E) +1$ from Equation~\eqref{eq:gp}.

Note that $\sum_{u=1}^{n-k} t_u = 0$. We claim that there exists a solution
(in rational numbers) to the linear system described in
Equation~\eqref{eq:linear}. Indeed, the system is described by a $(n-k)$-by-$(n-k)^2$ matrix $A$, each of whose columns contains one $1$ entry, one $-1$ entry, and the rest just zeros. If a vector $(\beta_u)_{u=1, \dots, n-k}$ is in the kernel of the transpose $A^t$, it must have $\beta_u - \beta_v = 0$ for all $u, v$. In other words, $\im(A)^\perp = \ker(A^t)$ is the span of $(1, \dots, 1)$, so $(t_1, \dots, t_{n-k})$ must be in the image of $A$.

By multiplying all the values in a rational solution of \eqref{eq:linear} by a large integer (and also multiplying the number of $Y$'s initially placed in the rows and columns of
the destabilized $O$'s by the same integer), we can obtain a solution of \eqref{eq:linear} in integers.  By adding a sufficiently large constant to the $\alpha_{uv}$ (but not to the number of $Y$'s initially
placed), we can then obtain a solution in nonnegative integers, which we
take to be our definition of $\alpha_{uv}$.

We have now arranged so that $\G$ is an invariant of $\p'$. Moreover, pre-composing with a rectangle can only decrease $\G$, and it keeps $\G$ the same only when the rectangle (which
a priori has to be supported in one of the rows and columns through
some $O_{i_l}$) is actually supported in the column through $O_{i_j}$,
and does not contain the square right below $O_{i_j}$.

It follows that $\G$ is indeed a filtration on $C\F^*(G, \zed, \p)$. We
denote the connected components of the associated graded complex by $C\G^*(G,\zed, \p')$; it suffices to show that these have zero cohomology. Without
loss of generality, we will focus on $C\G^*(G, \zed, \p)$.

The complex $C\G^*(G, \zed, \p)$ can only contain pairs $[\x', \tilde
\y]$ such that $\x'$ differs from $\x$ by either pre- or
post-composition with a rectangle supported in the column through
$O_{i_j}$. The condition that this rectangle does not contain the
square right below $O_{i_j}$ is automatic, because $\x$ and $\x'$ are
outer.

We find that there can be at most two elements in $C\G^*(G, \zed, \p)$,
namely $\p = [\x, \tilde \y]$ and $\p' = [\x', \tilde \y]$, where
$\x'$ is obtained from $\x$ by switching the horizontal coordinates of
the components of $\x$ in the two vertical circles bordering the
column of $O_{i_j}$. Provided that $\p'$ is positive, the pairs $\p$
and $\p'$ are related by a differential, so the cohomology of
$C\G^*(G, \zed, \p)$ would indeed be zero.

Therefore, the last thing to be checked is that $\p'$ is positive. We
know that $\p$ is positive, so we can choose a positive enhanced
domain $D$ representing $\p$. Recall that the destabilization point
near $O_{i_j}$ is denoted $p_j$ and is part of $\y$. Draw a vertical
segment $S$ going down the vertical circle from $p_j$ to a point of
$\x$. There are three cases:
\begin {enumerate}
\item There is no point of $\x'$ on the segment $S$. Then there exists
  a rectangle going from $\x'$ to $\x$, and $\p'$ appears in $d \p$ by
  pre-composition. Adding the rectangle to $D$ preserves positivity.
\item\label{case:del-rect} There is a point of $\x'$ on $S$, and the multiplicity of $D$
  just to the right of $S$ is positive. Then there is a rectangle,
  just to the right of $S$, going from $\x$ to $\x'$. We get a
  positive representative for $\p'$ by subtracting this rectangle from
  $D$.
\item There is a point of $\x'$ on $S$, and the multiplicity of $D$ is
  zero somewhere just to the right of $S$. Note that as we cross the
  segment $S$ from left to right the drop in multiplicity is
  constant; since $D$ is positive, this drop must be
  nonnegative. In particular, $c_j \geq
  d_j$. Relation~\eqref{eq:abcd1} implies $a_j \geq b_j + 1$. Looking
  at the inequalities in~\eqref{ineq:abcd1}, we see that we can use
  two of them (the ones involving $b_j$ and $d_j$) to improve the
  other two:
  \begin {equation}
    \label {ineq:abcd2}
    a_j \geq b_j + 1 \geq f_j + 1, \qquad c_j \geq d_j \geq f_j + \epsilon_j.
  \end {equation}
  Let us add to $D$ the periodic domain given by the column through
  $O_{i_j}$. This increases $b_j, d_j$, and $f_j$ by $1$ while keeping
  $a_j$ and $c_j$ constant. Nevertheless, Inequality~\eqref{ineq:abcd2} shows
  that the inequalities~\eqref{ineq:abcd1} are still satisfied for
  the new domain $\tilde D$. Thus $\tilde D$ is positive, and its
  multiplicity just to the right of $S$ is everywhere nonnegative. We
  can then subtract a rectangle from $\tilde D$ to obtain a positive
  representative for $\p'$ as in case~\eqref{case:del-rect}.
\end {enumerate}

The three cases are pictured in Figure~\ref{fig:3cases}. \end {proof}

\begin{figure}
\begin{center}
\input{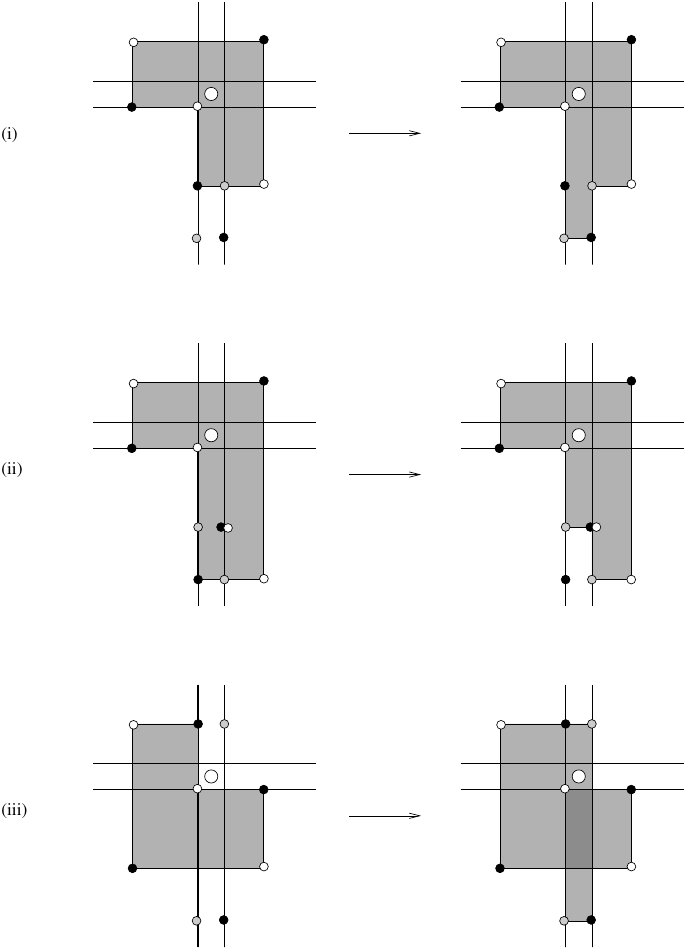_t}
\end{center}
\caption {{\bf The three cases in Lemma~\ref{lemma:outer}.} On the
  left of each picture we show a positive $L$ domain representing
  $\p=[\x, \tilde \y]$. These domains have index $0$, $2$ and $2$,
  respectively (assuming that all the real multiplicities are
  zero). On the right of each picture we show the corresponding
  positive domain representing $\p' = [\x', \tilde \y]$. These domains
  have all index $1$. The components of $\x$ are the black dots, the
  components of $\x'$ the gray dots, and those of $\y$ the white
  dots. The segment $S$ is drawn thicker.}
\label{fig:3cases}
\end{figure}

\begin {proof}[Proof of Theorem~\ref{thm:sparse}]
  The case when $k=0$ is trivial, and the case $k=1$ follows from
  Proposition~\ref{prop:oneO}. For $k \geq 2$, Lemma~\ref{lemma:inner}
  says that for any pair $[\x, \tilde \y]$ of index $< 2-k$, the generator $\x$ is
  outer. Lemma~\ref{lemma:outer} then shows that the homology of
  $\grcp$ is zero in the given range, which implies the same for
  $\HP^*(G, \zed)$.
\end {proof}

\bibliographystyle{custom}
\bibliography{biblio}

\end{document}